\newtheorem{thm}{Theorem}[section]
\newtheorem{lem}[thm]{Lemma}
\newtheorem{prop}[thm]{Proposition}
\newtheorem{rem}[thm]{Remark}
\theoremstyle{definition}
\numberwithin{equation}{section}
\renewcommand{\Re}{\hbox{Re}\,}
\renewcommand{\Im}{\hbox{Im}\,}
\newcommand{\C}{\mathbb{C}}
\renewcommand{\div}{\operatorname{div}}
\newcommand{\N}{\mathbb{N}}
\newcommand{\R}{\mathbb{R}}
\newcommand{\supp}{\operatorname{supp}}
\def\hat{\widehat}
\def\tilde{\widetilde}
\def \bfo {\begin {eqnarray*} }
\def \efo {\end {eqnarray*} }
\def \ba {\begin {eqnarray*} }
\def \ea {\end {eqnarray*} }
\def \beq {\begin {eqnarray}}
\def \eeq {\end {eqnarray}}
\def \supp {\hbox{supp }}
\def \dist {\hbox{dist}}
\def \p {\partial}
\def\hat{\widehat}
\def\tilde{\widetilde}
\def \bfo {\begin {eqnarray*} }
\def \efo {\end {eqnarray*} }
\def \ba {\begin {eqnarray*} }
\def \ea {\end {eqnarray*} }
\def \beq {\begin {eqnarray}}
\def \eeq {\end {eqnarray}}
\def \supp {\hbox{supp }}
\def \dist {\hbox{dist}}
\def \p {\partial}
\begin{document}

 \title[$L^p$ resolvent estimates]{On $L^p$ resolvent estimates for elliptic operators on compact manifolds}

\author[Krupchyk]{Katsiaryna Krupchyk}

\address
        {K. Krupchyk, Department of Mathematics and Statistics \\
         University of Helsinki\\
         P.O. Box 68 \\
         FI-00014   Helsinki\\
         Finland}

\email{katya.krupchyk@helsinki.fi}

\author[Uhlmann]{Gunther Uhlmann}

\address
       {G. Uhlmann, Department of Mathematics\\
       University of Washington\\
       Seattle, WA  98195-4350\\
       USA}
\email{gunther@math.washington.edu}

\maketitle

\begin{abstract}
We prove uniform $L^p$ estimates for resolvents of higher order elliptic self-adjoint differential operators on compact manifolds without boundary, generalizing a corresponding result of \cite{DKS_resolvent} in the case of  Laplace-- Beltrami operators on Riemannian manifolds. In doing so, we follow the methods, developed in  \cite{Bourgain_Shao_Sogge_Yao} very closely.   We also show that spectral regions in our $L^p$ resolvent estimates are optimal.

\end{abstract}

\section{Introduction and statement of results}

\label{sec_int}

The purpose of this paper is to extend the result of  \cite{DKS_resolvent}, see also \cite{Bourgain_Shao_Sogge_Yao},   for the Laplace-Beltrami operator $\Delta_g$ on a compact Riemannian manifold $(M,g)$ without boundary  of dimension $n\ge 3$,   to the case of higher order elliptic self-adjoint differential operators, and specifically to show how the methods of \cite{Bourgain_Shao_Sogge_Yao}  apply in this context.

In \cite{DKS_resolvent} it was established that given $\delta>0$ small,
there exists a constant $C=C(\delta)>0$ such that  for all $u\in C^\infty(M)$ and all $\zeta\in \mathcal{R}_\delta$, the following $L^p$ resolvent bound holds,
\begin{equation}
\label{eq_resolvent_est_laplacian}
\|u\|_{L^{\frac{2n}{n-2}}(M)}\le C\|(-\Delta_g-\zeta)u\|_{L^{\frac{2n}{n+2}}(M)},
\end{equation}
where
\[
\mathcal{R}_\delta=\{\zeta\in \C: (\Im \zeta)^2\ge 4\delta^2(\textrm{Re }\zeta+\delta^2)\}.
\]
Notice that $\mathcal{R}_\delta$ is the exterior of a parabolic region, containing the spectrum of $-\Delta_g$,   see Figure \ref{pic_laplacian}.
We observe that the bound \eqref{eq_resolvent_est_laplacian} cannot hold if $\mathcal{R}_\delta$ intersects the spectrum of $-\Delta_g$, as the latter is discrete. The interesting question, posed in \cite{DKS_resolvent} and subsequently studied in \cite{Bourgain_Shao_Sogge_Yao}, is how close $\mathcal{R}_\delta$ can come to the spectrum of $-\Delta_g$ near infinity, while still having the uniform estimate \eqref{eq_resolvent_est_laplacian}.

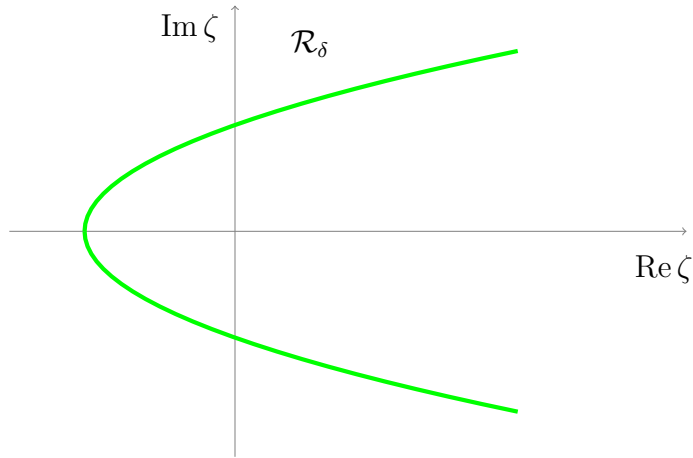
\begin{figure} [ht]
\centering
\begin{tikzpicture}
\draw [help lines, ->] (-3,0) -- (6,0);
\draw [help lines,  ->] (0,-3) -- (0,3);
\draw[green, ultra thick, domain=0:2.4] plot ({\x*\x-2}, \x);
\draw[green, ultra thick, domain=0:2.4] plot ({\x*\x-2}, -\x);
\node at (5.7,-0.5) {$\Re \zeta$};
\node at (-0.6, 2.7) {$\Im \zeta$};
\node at (1, 2.5) {$\mathcal{R}_\delta$};
\end{tikzpicture}
\caption{Spectral region $\mathcal{R}_\delta$ in the uniform resolvent bound \eqref{eq_resolvent_est_laplacian}.}
\label{pic_laplacian}
\end{figure}

Thanks to the work  \cite{Bourgain_Shao_Sogge_Yao}, we know that the region $\mathcal{R}_\delta$ is in general the maximal possible for the uniform estimate \eqref{eq_resolvent_est_laplacian} to hold. Indeed, in  \cite{Bourgain_Shao_Sogge_Yao} it is shown that the region  cannot be improved when $M$  is  the standard sphere, or more generally,  a Zoll manifold, due to   a cluster structure of the spectrum of $-\Delta_g$ on such manifolds, \cite{Weinstein_1977}. As explained in \cite{Bourgain_Shao_Sogge_Yao},  any sharpening in the spectral region is related to  improvements in estimates for the remainder term in the sharp Weyl law for $-\Delta_g$, which measures how uniformly its spectrum is distributed. Consequently, improvements in the spectral region $\mathcal{R}_\delta$ are available for manifolds of nonpositive  curvature and in the case of the torus with a flat metric, see \cite{Bourgain_Shao_Sogge_Yao}, and also \cite{Shen_2001}.

The corresponding uniform $L^p$ resolvent estimates for the standard Laplacian  on $\R^n$, $n\ge 3$, were obtained in  \cite{Kenig_Ruiz_Sogge}. Here in contrast to the case of a compact manifold, the estimates are valid  for all values of the complex spectral parameter $\zeta$.  In  \cite{Guillarmou_Hassell}  the results of \cite{Kenig_Ruiz_Sogge} were generalized to the case of non-trapping asymptotically conic manifolds.

To formulate our results let us begin by fixing some notation.  Let $M$ be a compact connected $C^\infty$ manifold without boundary of dimension $n\ge 2$, equipped with a strictly positive $C^\infty$ volume density $d\mu$.
Let $P$ be a differential operator on $M$ of order $m\ge 1$ with $C^\infty$ coefficients.  We assume that $P$ is elliptic and formally self-adjoint with respect to $d\mu$,
\[
\int_M P u\overline{v}d\mu=\int_M u\overline{Pv}d\mu,\quad u,v\in C^\infty(M).
\]
Let $p(x,\xi)\in C^\infty(T^*M)$ be the principal symbol of $P$, which is a real-valued homogeneous polynomial in $\xi$ of degree $m$.  Since $p(x,\xi)\ne 0$ for $\xi\ne 0$ and $T^*M\setminus\{0\}$ is connected, without loss of generality we shall assume, as we may,  that $p(x,\xi)>0$ for $\xi\ne 0$.  The order $m$ of the operator $P$ is therefore even.

If we equip the operator $P$ with the domain $C^\infty(M)$, $P$  becomes an unbounded symmetric essentially self-adjoint operator on $L^2(M)$, i.e. $P$ has a unique self-adjoint extension, which we shall denote again by $P$. The domain of the self-adjoint extension is $\mathcal{D}(P)=H^m(M)$, the standard Sobolev space on $M$.

An application of G\aa{}rding's inequality implies that there exists a constant $C>0$ such that $P\ge -CI$ in the sense of self-adjoint operators. Thus, after replacing $P$ by $P+CI$, we assume, as we may, that $P\ge 0$.

The spectrum of $P$ is discrete,   consisting only of real eigenvalues, where each eigenvalue is isolated and of finite multiplicity. Let $0\le \lambda_1\le\lambda_2\le \dots$ be the eigenvalues of $P$ repeated according to their multiplicity, and let $e_1,e_2,\ldots\in L^2(M)$ be the corresponding orthonormal basis of eigenfunctions.

Seeking to generalize \eqref{eq_resolvent_est_laplacian},  our goal is to find a region $\mathcal{R}\subset\C$, for which there holds a uniform $L^p$ bound of the form,
\begin{equation}
\label{eq_resolvent_est_0}
\|u\|_{L^q(M)}\le C_{\mathcal{R}}\|(P-\zeta)u\|_{L^p(M)},\quad u\in C^\infty(M),\quad \zeta\in\mathcal{R},
\end{equation}
for suitable $p$ and $q$.
Motivated by the classical Sobolev inequalities, we shall be interested in the estimate \eqref{eq_resolvent_est_0} for pairs $(p,q)$ belonging to the Sobolev line
\begin{equation}
\label{eq_p_q_1}
\frac{1}{p}-\frac{1}{q}=\frac{m}{n},
\end{equation}
assuming that $p<n/m$.
Following \cite{Bourgain_Shao_Sogge_Yao, DKS_resolvent}, we shall also require  the pairs $(p,q)$ to be on the duality line,
\begin{equation}
\label{eq_p_q_2}
\frac{1}{p}+\frac{1}{q}=1.
\end{equation}
The restrictions \eqref{eq_p_q_1} and \eqref{eq_p_q_2} imply that
\[
p=\frac{2n}{n+m},\quad q=\frac{2n}{n-m}, \quad n>m.
\]

It is clear that  the estimate \eqref{eq_resolvent_est_0} can only hold away from the spectrum of $P$.  Similarly to the case  of $-\Delta_g$, when establishing the estimate \eqref{eq_resolvent_est_0},   we shall in fact be concerned with the case of $\zeta$ away from all of $[0,\infty)$.  Given $\zeta\in  \C\setminus [0,\infty)$, it will then be convenient to write $\zeta=z^m$ with $z\in \Xi$, where
\[
\Xi=\{z\in \C: \arg (z)\in (0,2\pi/m)\}.
\]
This is due to that fact that  the map
\begin{align*}
f=f_m: \Xi\to \C\setminus [0,\infty), \quad z\mapsto z^{m},
\end{align*}
is a conformal isomorphism.  This map extends continuously to $f:\overline{\Xi}\to \C$ with $f(\p \Xi)=[0,\infty)$.

Notice that the region $\mathcal{R}_\delta$ in the uniform bound  \eqref{eq_resolvent_est_laplacian} satisfies
\[
\mathcal{R}_\delta=f_2(\Xi_\delta), \quad \Xi_\delta=\{z\in \C: \Im z\ge \delta\},
\]
By analogy with this, it is natural to try to establish the estimate \eqref{eq_resolvent_est_0} for $\zeta=z^m$, where
\[
z\in \Xi_\delta=\{z\in \Xi: \textrm{dist}(z,\p\Xi)\ge \delta\},
\]
with $\delta>0$ small but fixed. We have
\[
\Xi_\delta=\{z\in \C: \textrm{arg}(z)\in (0,2\pi/m), \Im z\ge \delta,-\Im(ze^{-2\pi i/m})\ge \delta\}.
\]


Associated with the principal symbol $p(x,\xi)$ of the operator $P$ is the cosphere
\[
\Sigma_x=\{\xi\in T_x^*M:p(x,\xi)=1\}, \quad x\in M.
\]
We may notice that for each  $x\in M$, the cosphere $\Sigma_x$ is a $C^\infty$ compact connected hypersurface in $\R^n$, see the discussion before Lemma \ref{lem_stationary_phase} below.
The cosphere $\Sigma_x$ is called strictly convex if the second fundamental form is  definite at each point of $\Sigma_x$.  This is equivalent to the fact that the Gaussian curvature of $\Sigma_x$ is non-vanishing.

The following theorem is the main result of this paper, which is a generalization of the uniform estimate \eqref{eq_resolvent_est_laplacian}, obtained in
\cite{DKS_resolvent}, to the case of higher order elliptic self-adjoint differential operators.
\begin{thm}
\label{thm_main}
Assume that $n>m\ge 2$ and that for each $x\in M$, the cosphere $\Sigma_x$ is strictly convex.  Then
given $\delta>0$ small, there is a constant $C=C(\delta)>0$ such that for all $u\in C^\infty(M)$ and all $z\in \Xi_\delta$, the following estimate holds
\begin{equation}
\label{eq_resolvent_est}
\|u\|_{L^{\frac{2n}{n-m}}(M)}\le C \|(P-z^m)u\|_{L^{\frac{2n}{n+m}}(M)}.
\end{equation}
\end{thm}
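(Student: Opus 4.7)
The plan is to follow the parametrix method of \cite{Bourgain_Shao_Sogge_Yao} closely, replacing the half-wave operator $e^{it\sqrt{-\Delta_g}}$ used there by $e^{itP^{1/m}}$, which is well defined by the functional calculus since $P\ge 0$. First I would dispose of the bounded-$z$ regime: on any compact subset of $\Xi_\delta$, the number $z^m$ stays at positive distance from the spectrum of $P$ (which lies in $[0,\infty)$), so $(P-z^m)^{-1}: L^2(M)\to H^m(M)$ is uniformly bounded, and \eqref{eq_resolvent_est} follows from Sobolev embedding together with duality. Hence we may restrict to the regime $|z|\ge\lambda_0$ with $\lambda_0$ large.

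Using the partial fraction expansion
\[
\frac{1}{\tau^m-z^m}=\frac{z^{1-m}}{m}\sum_{k=0}^{m-1}\frac{\omega_k^{1-m}}{\tau-z\omega_k},\qquad \omega_k=e^{2\pi ik/m},
\]
together with the functional calculus, I would express $(P-z^m)^{-1}$ as a sum of $m$ contributions of the form $\pm i\,z^{1-m}\omega_k^{1-m}\int_0^{\pm\infty}e^{itP^{1/m}}e^{-itz\omega_k}\,dt$, the sign being chosen so that the integral converges. The hypothesis $z\in\Xi_\delta$ precisely guarantees $|\Im(z\omega_k)|\ge\delta$ for every $k$, so each $t$-integral converges absolutely on $L^2(M)$. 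I would then insert a smooth time cutoff $\chi(t/T)$, with $T$ chosen smaller than the injectivity radius of the Hamiltonian flow of $p^{1/m}$, and split each summand into short-time and long-time pieces.

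The long-time piece is bounded on $L^2(M)$ with norm $O(e^{-\delta T}/\delta)$, by the unitarity of $e^{itP^{1/m}}$ and the decay $|e^{-itz\omega_k}|\le e^{-\delta t}$; combined with a standard spectral multiplier bound this contributes only a negligible error. For the short-time piece I would construct a Fourier integral parametrix for $\chi(t/T)e^{itP^{1/m}}$ whose phase solves the Hamilton--Jacobi equation for the symbol $p(x,\xi)^{1/m}$. Integrating this parametrix against $e^{-itz\omega_k}$ and applying stationary phase in $t$ and $\xi$ concentrates the kernel on the cosphere $\Sigma_x$ dilated by $|z|$. Strict convexity of $\Sigma_x$ makes the Hessian of the phase nondegenerate in the directions transverse to the Hamiltonian flow, yielding a kernel with Bessel-type decay analogous to the Laplacian case treated in \cite{DKS_resolvent,Kenig_Ruiz_Sogge}.

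The desired $L^{2n/(n+m)}\to L^{2n/(n-m)}$ bound then follows by a Stein--Tomas type analytic interpolation argument, essentially as in \cite{Bourgain_Shao_Sogge_Yao}. The main obstacle is carrying out the stationary phase analysis uniformly in $x\in M$ on the general strictly convex hypersurfaces $\Sigma_x$ (rather than on standard Euclidean spheres), and arranging the scalings in $|z|$ and $t$ so that the resulting oscillatory integral operator lands exactly at the endpoint of the Stein--Tomas estimate. The strict convexity assumption enters precisely here, through the classical decay $\widehat{d\sigma_{\Sigma_x}}(\xi)=O(|\xi|^{-(n-1)/2})$ for Fourier transforms of surface measures on hypersurfaces with nonvanishing Gaussian curvature.
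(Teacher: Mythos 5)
Your proposal follows the same overall strategy as the paper: pass to $Q=P^{1/m}$, represent $(P-z^m)^{-1}$ via the half-wave group $e^{itQ}$ and a partial-fraction/residue expansion over the $m$-th roots of $z^m$, split the $t$-integral into a local (short-time) piece handled by the H\"ormander--Lax parametrix plus stationary phase exploiting strict convexity of $\Sigma_x$, and a non-local piece, then interpolate. The partial-fraction form you write with $m$ terms is equivalent to the paper's Lemma~\ref{lem_residue}, which pairs the roots $\tau_k$ and $\tau_{m/2+k}=-\tau_k$ to produce only $m/2$ summands with $\Im\tau_k>0$; both are fine, and your observation that $z\in\Xi_\delta$ forces $|\Im(z\omega_k)|\ge\delta$ for every $k$ is correct.

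There is, however, a genuine gap in your treatment of the long-time piece. You claim it is $O(e^{-\delta T}/\delta)$ in $L^2\to L^2$ norm by unitarity, which is true, but an $L^2\to L^2$ bound on a compact manifold gives nothing toward the required $L^{2n/(n+m)}\to L^{2n/(n-m)}$ bound, and a vague ``standard spectral multiplier bound'' does not close this. The multiplier $r_z(\tau)=m_z(\tau)-m_z^{\mathrm{loc}}(\tau)$ is only $O(\delta^{-1})$ uniformly in $\tau$, and its decay away from $\tau\approx\Re\tau_k\sim|z|$ is not of the form $(1+\tau^m)^{-1}$ uniformly in $z$, so Lemma~\ref{lem_based_on_littlewood_paley} does not apply directly. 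The paper instead uses the Seeger--Sogge spectral cluster estimates (Lemma~\ref{lem_truncated_eq}) together with repeated integration by parts to get $|r_z(\tau)|\lesssim|z|^{1-m}\sum_k(1+|\tau\mp\Re\tau_k|)^{-N}$, and then performs a nontrivial summation over clusters (the bound \eqref{eq_sec_2_3_8}) to close the argument. This step is where strict convexity re-enters implicitly (through Seeger--Sogge) and is not a ``negligible error.'' Secondly, your bounded-$|z|$ argument via ``$(P-z^m)^{-1}:L^2\to H^m$ uniformly bounded plus Sobolev embedding and duality'' is off as stated: $H^m\hookrightarrow L^{2n/(n-2m)}$, not $L^{2n/(n-m)}$, and that embedding requires $n>2m$ whereas the theorem only assumes $n>m$. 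This is fixable (e.g.\ via $H^{-m/2}\to H^{m/2}$ and Sobolev, or as in the paper via the uniform bound $|m_z(\tau)|\le C(1+\tau^m)^{-1}$ on the compact set and Lemma~\ref{lem_based_on_littlewood_paley}), but your argument as written does not cover the whole range $m<n\le 2m$.
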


In the case of an elliptic operator $P$ of order $m\ge 4$, letting $\mathcal{R}_\delta=f(\Xi_\delta)$, a straightforward computation show that for $R>0$ sufficiently large, we have
\[
\mathcal{R}_\delta\cap\{\zeta\in \C:|\zeta|\ge R\}=(\mathcal{R}^+_\delta \cup \mathcal{R}^-_\delta)\cap \{\zeta\in \C:|\zeta|\ge R\},
\]
where
\begin{align*}
\mathcal{R}^+_\delta :=&\{\zeta\in \C: \Im \zeta\ge (\Re \zeta)^{\frac{m-1}{m}} m\delta+\mathcal{O}((\Re \zeta)^{\frac{m-3}{m}}), \Re \zeta\ge 0\}\\
&\cup
\{\zeta\in \C: \Im \zeta\le - (\Re\zeta)^{\frac{m-1}{m}} m\delta-\mathcal{O}((\Re\zeta)^{\frac{m-3}{m}}), \Re\zeta\ge 0\},
\end{align*}
and
\[
\mathcal{R}^-_\delta :=\{\zeta\in \C: \Re\zeta\le 0\}.
\]
Thus, for $|\zeta|$ sufficiently large, similarly to the case of $-\Delta_g$, the region $\mathcal{R}_\delta$ is the exterior of  a parabolic neighborhood of the spectrum of the operator $P$, see Figure \ref{pic_elliptic}.

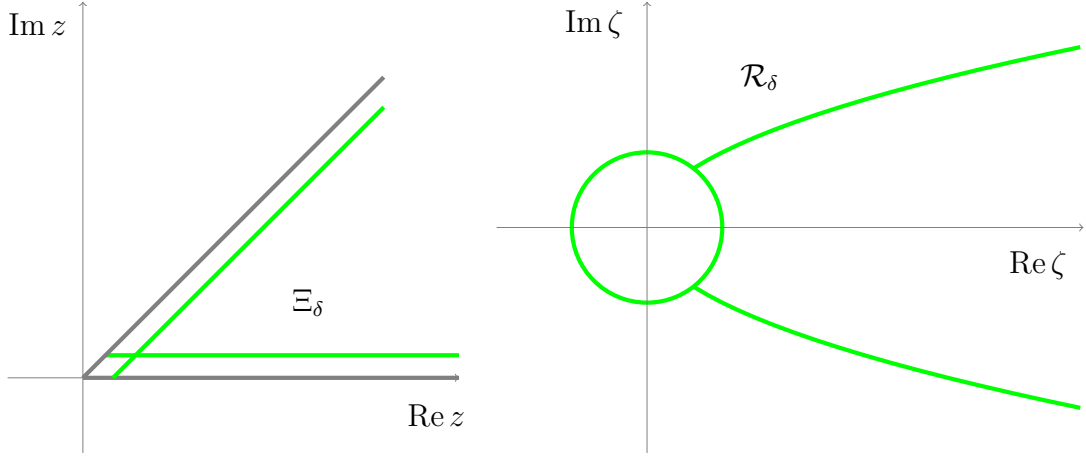
\begin{figure} [ht]
\centering
\begin{tikzpicture}
\draw [help lines, ->] (-1,0) -- (5,0);
\draw [help lines,  ->] (0,-1) -- (0,5);
\draw [green,  ultra thick, -] (0.3,0.3) -- (5,0.3);
\draw [help lines, ultra thick, -] (0,0) -- (4,4);
\draw [help lines, ultra thick, -] (0,0) -- (5,0);
\draw [green,  ultra thick, -] (0.4,0) -- (4,3.6);
\node at (4.7,-0.5) {$\Re z$};
\node at (-0.6, 4.7) {$\Im  z$};
\node at (3, 1) {$\Xi_\delta$};
\draw[green, ultra thick, domain=2:4.4] plot ({(\x-2)*(\x-2)+7.5}, \x);
\draw[green, ultra thick, domain=2:4.4] plot ({(\x-2)*(\x-2)+7.5}, 4-\x);
\draw [green,  ultra thick,  fill=white] (7.5,2) circle [radius=1];
\draw [help lines, ->] (5.5,2) -- (13.3,2);
\draw [help lines,  ->] (7.5,-1) -- (7.5,5);
\node at (12.7,1.5) {$\Re \zeta$};
\node at (6.8, 4.7) {$\Im \zeta$};
\node at (9, 4) {$\mathcal{R}_\delta$};
\end{tikzpicture}
\caption{The spectral regions $\Xi_\delta$ and $\mathcal{R}_\delta=f(\Xi_\delta)$ in the uniform estimate \eqref{eq_resolvent_est}.}
\label{pic_elliptic}
\end{figure}

As an example of an operator $P$ to which Theorem \ref{thm_main} applies, one can consider $P=(-\Delta_g)^k$,  $2k< n $, where $-\Delta_g$ is the Laplace--Beltrami operator on a compact Riemannian manifold $(M,g)$.

Our proof of Theorem \ref{thm_main} relies on the approach, developed in  \cite{Bourgain_Shao_Sogge_Yao}. The main ingredients here are the spectral cluster estimates, obtained in \cite{Sogge_1988} in the case of the Laplace--Beltrami operator on a compact Riemannian manifold, and in \cite{Seeger_Sogge_1989} in the case of higher order elliptic operators, the method of stationary phase, as well as the H\"ormander--Lax parametrix for  the operator $e^{it\sqrt[m]{P}}$ for small times.

Let us remark that the strict convexity of the cospheres $\Sigma_x$ in Theorem \ref{thm_main}  guarantees that the Fourier transform of the surface measure on $\Sigma_x$ has essentially the same decay at infinity, as that of the surface measure on the sphere, thanks to the method of stationary phase, see  \cite[Theorem 1.2.1, p. 50]{Sogge_book}.  This assumption also plays a crucial role in the derivation of the spectral cluster estimates for higher order elliptic operators  in \cite{Seeger_Sogge_1989}.

We may also notice that the a priori estimate \eqref{eq_resolvent_est} implies that the $L^2$ resolvent of $P$, $(P-\zeta)^{-1}$, $\zeta\in\C\setminus[0,\infty)$,  is a bounded operator:  $L^{\frac{2n}{n+m}}(M)\to L^{\frac{2n}{n-m}}(M)$, see Proposition \ref{eq_prop_resolvent_on_L_p} below.

Our next result shows that the region $\Xi_\delta$  in \eqref{eq_resolvent_est}  is in general optimal for higher order elliptic  operators, since it cannot be improved for an operator  whose principal symbol has a periodic Hamilton flow.   This is due to the fact that the spectrum of such an operator is distributed in a non-uniform fashion, displaying a cluster  structure, see \cite{Colin_de_Verdiere_1979} and \cite{Weinstein_1977}.

\begin{thm}
\label{thm_main_2}
Assume that $n>m\ge 2$ and that for each $x\in M$, the cosphere $\Sigma_x$ is strictly convex. Assume furthermore that
  the subprincipal symbol of the operator $P$ vanishes, and that  the  Hamilton flow of the principal symbol $p$ is periodic, with a common minimal period on $p^{-1}(1)$.  Then  there exist

(i) a sequence $z_k\in \Xi$ such that $\emph{\Re} z_k\to \infty$,  $0<\emph{\Im} z_k\to 0$ as $k\to \infty$, and
\[
\|(P-z_k^m)^{-1}\|_{L^{\frac{2n}{n+m}}(M)\to L^{\frac{2n}{n-m}}(M)}\to \infty,\quad k\to \infty,
\]
and

(ii) a sequence  $z_k \in \Xi$ such that  $\emph{\Re} (z_ke^{-2\pi i/m})\to \infty$, $0<-\emph{\Im} (z_ke^{-2\pi i/m})\to 0$ as $k\to \infty$, and
\[
\|(P-z_k^m)^{-1}\|_{L^{\frac{2n}{n+m}}(M)\to L^{\frac{2n}{n-m}}(M)}\to \infty,\quad k\to \infty.
\]
\end{thm}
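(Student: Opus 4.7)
My plan is to exhibit explicit test sequences $z_k \in \Xi$ approaching the boundary along which the $L^{2n/(n+m)} \to L^{2n/(n-m)}$ resolvent norm diverges, building the test points from individual eigenfunctions of $P$ in the spectral clusters. The Weinstein / Colin de Verdi\`ere cluster theorem, applicable under our hypotheses that the subprincipal symbol vanishes and the Hamilton flow of $p$ on $p^{-1}(1)$ is periodic with common minimal period $T$, guarantees that the eigenvalues $\nu_j := \lambda_j^{1/m}$ of $P^{1/m}$ accumulate in uniformly bounded bands about the arithmetic progression $\alpha k = 2\pi k / T$, supplying a natural sequence of eigenvalues $\nu_{j(k)}$ along which to anchor the construction. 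The main computational step only uses that $P$ has an $L^2$-orthonormal basis of eigenfunctions with eigenvalues tending to infinity (both automatic here).

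For part~(i), choose $j(k)$ with $\nu_{j(k)} \in [\alpha k - C, \alpha k + C]$ and set $z_k := \nu_{j(k)} + i \epsilon_k$ with $\epsilon_k \downarrow 0$ to be fixed. Then $z_k \in \Xi$ for $k$ large (since $\arg z_k \downarrow 0$ lies in $(0, 2\pi/m)$), $\Re z_k = \nu_{j(k)} \to \infty$, $\Im z_k = \epsilon_k \to 0$, and a binomial expansion gives $|z_k^m - \lambda_{j(k)}| = m \epsilon_k \nu_{j(k)}^{m-1}(1 + o(1))$. Since $(P - z_k^m)^{-1} e_{j(k)} = (\lambda_{j(k)} - z_k^m)^{-1} e_{j(k)}$, testing on $e_{j(k)}$ yields
\[
\bigl\|(P - z_k^m)^{-1}\bigr\|_{L^{2n/(n+m)} \to L^{2n/(n-m)}} \ \geq\ \frac{\|e_{j(k)}\|_{L^{2n/(n-m)}}}{|\lambda_{j(k)} - z_k^m|\,\|e_{j(k)}\|_{L^{2n/(n+m)}}}.
\]
H\"older's inequality on compact $M$ gives $\|e_{j(k)}\|_{L^{2n/(n+m)}} \leq C$ and, using $1/p + 1/q = 1$ for our dual pair, $\|e_{j(k)}\|_{L^{2n/(n+m)}}\,\|e_{j(k)}\|_{L^{2n/(n-m)}} \geq \|e_{j(k)}\|_{L^2}^2 = 1$, whence $\|e_{j(k)}\|_{L^{2n/(n-m)}} / \|e_{j(k)}\|_{L^{2n/(n+m)}} \geq c_0 > 0$ uniformly in $k$. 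Combining,
\[
\bigl\|(P - z_k^m)^{-1}\bigr\|_{L^{2n/(n+m)} \to L^{2n/(n-m)}} \ \geq\ \frac{c_0}{m\, \epsilon_k\, \nu_{j(k)}^{m-1}},
\]
and choosing $\epsilon_k := \nu_{j(k)}^{-m}$ keeps $\epsilon_k \to 0$ while forcing the right-hand side to grow like $\nu_{j(k)} \to \infty$.

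Part~(ii) follows from (i) by complex-conjugation symmetry. Self-adjointness of $P$ gives $(P - \overline\zeta)^{-1} = \bigl((P - \zeta)^{-1}\bigr)^*$, and the identity $1/p + 1/q = 1$ means the Banach-space adjoint preserves the $L^{2n/(n+m)} \to L^{2n/(n-m)}$ operator norm. Setting $w_k := \overline{z_k}\, e^{2\pi i/m}$, one has $w_k \in \Xi$, $w_k^m = \overline{z_k^m}$, $\Re(w_k e^{-2\pi i/m}) = \nu_{j(k)} \to \infty$, and $-\Im(w_k e^{-2\pi i/m}) = \epsilon_k \to 0^+$, so $(w_k)$ is a sequence as demanded by (ii). The main obstacle is not the divergence itself but recovering the sharp rate of divergence; doing so would require replacing the single eigenfunction $e_{j(k)}$ by a zonal-type cluster quasimode, built via the cluster analysis of Weinstein and Colin de Verdi\`ere together with the H\"ormander-Lax parametrix for $e^{it\sqrt[m]{P}}$, saturating the sharp Seeger-Sogge spectral projector bound $\|\chi_k\|_{L^2 \to L^{2n/(n-m)}} \lesssim k^{(m-1)/2}$ --- exactly where the strict convexity of the cospheres and the periodicity assumption genuinely enter.
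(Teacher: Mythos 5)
Your argument is correct for the theorem as literally stated, but it is essentially disjoint from the paper's and --- as your own closing remark concedes --- it sidesteps the content the Zoll-type hypotheses are there to enable. By choosing $\Im z_k = \nu_{j(k)}^{-m}$, the single-eigenfunction test gives divergence trivially, for \emph{any} elliptic self-adjoint $P$ in the paper's class; the vanishing subprincipal symbol, periodicity of the Hamilton flow, and strict convexity of the cospheres play no role, which should be a warning sign. The paper proves something strictly stronger: divergence along $z_k=\alpha_k+i\beta_k$ with $\alpha_k$ at the Weinstein--Colin de Verdi\`ere cluster centers and $\beta_k\to 0$ \emph{arbitrarily slowly} (only $\beta_k>C/\alpha_k$ is required), via the counting-function criterion of Lemma~\ref{lem_satur_2}. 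That criterion in turn rests on Lemma~\ref{lem_satur_1} (removing the band $[\alpha_k-1,\alpha_k+1)$ using Theorem~\ref{thm_main}), the Bernstein inequality of Lemma~\ref{lem_Bernstein_type}, a pointwise lower bound on the band-restricted kernel, and finally Theorem~\ref{thm_weinstein} to verify the eigenvalue clustering. It is exactly this slow-decay allowance that justifies the claim, advertised before the theorem, that the region $\Xi_\delta$ cannot be improved. Note that with your choice $\beta_k=\nu_{j(k)}^{-m}$ the hypothesis of Lemma~\ref{lem_satur_2} actually \emph{fails} for $n>m$, since $(\beta_k\alpha_k^{n-1})^{-1}[N(\alpha_k+\beta_k)-N(\alpha_k-\beta_k)]$ is then $\mathcal{O}(\nu_{j(k)}^{m+1-n})$, which does not tend to infinity; your sequence lies outside the regime the paper's machinery addresses. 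For part~(ii) the paper repeats the argument directly in the second sector rather than passing through the adjoint, but both routes are valid. In short: a correct but much weaker proof that satisfies the letter of the statement while bypassing its point; the quantitative improvement you correctly identify in your final paragraph is the one the paper actually establishes.
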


As an example of the operator $P$ in Theorem \ref{thm_main_2} we can take $P=(-\Delta_g)^{k}$, $2k<n$,  on a Zoll manifold $M$,  similarly to the case when $k=1$ in \cite{Bourgain_Shao_Sogge_Yao}.
To prove Theorem \ref{thm_main_2} we shall also use the methods of  \cite{Bourgain_Shao_Sogge_Yao}.

The paper is organized as follows.  Section \ref{sec_proof_thm_1} is devoted to the proof of Theorem \ref{thm_main} while Section  \ref{sec_proof_thm2} contains the proof of Theorem \ref{thm_main_2}.

\section{Proof of Theorem \ref{thm_main}}

\label{sec_proof_thm_1}

\subsection{Formula for the resolvent $(P-z^m)^{-1}$ based on a half wave group for  $P^{1/m}$}
We shall denote by $\Psi^\mu_{\textrm{cl}}(M)$  the space of classical pseudodifferential operators of order $\mu$ on $M$.  Let $Q=P^{1/m}$ be defined by the spectral theorem. According to Seeley's theorem,  see \cite[Theorem 3.3.1]{Sogge_book}, we have $Q\in \Psi^1_{\textrm{cl}}(M)$ with the principal symbol $q=p^{1/m}$. Furthermore, $\mathcal{D}(Q)=H^1(M)$, and the eigenvalues of $Q$ are $\mu_j=\lambda_j^{1/m}$, $j=1,2,\dots$.

Letting $z\in \Xi$ and  following \cite{Bourgain_Shao_Sogge_Yao},  let us derive a natural formula for the $L^2$ resolvent $(P-z^m)^{-1}$.  To that end, we write
$(P-z^m)^{-1}=m_z(Q)$,
where the multiplier $m_z(Q)$ is given by $m_z(\tau)=(\tau^m-z^m)^{-1}$. Using the inverse Fourier transform, we get
\[
m_z(\tau)=\frac{1}{2\pi}\int_{-\infty}^{+\infty}\hat{m_z}(t) e^{it \tau}dt, \quad \hat{m_z}(t)=\int_{-\infty}^{+\infty} \frac{1}{\tau^m-z^m}e^{-it\tau}d\tau.
\]
We shall need the following result.
\begin{lem}
\label{lem_residue}
Let $z\in \Xi$. Then for any $t\in\R$, we have
\begin{equation}
\label{eq_residue}
\int_{-\infty}^{+\infty} \frac{1}{\tau^m-z^m}e^{-it\tau}d\tau=\frac{2\pi i}{mz^{m-1}}\sum_{k=0}^{m/2-1} e^{2\pi k i/m+i|t|\tau_k},
\end{equation}
where $\tau_k= ze^{2\pi k i/m}$, $k=0,1,\dots, m/2-1$. Here $\emph{\Im} \tau_k>0$,   $k=0,1,\dots, m/2-1$.
\end{lem}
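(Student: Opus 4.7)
The plan is to evaluate the integral by contour integration, using the residue theorem with a large semicircular contour closed in the upper or lower half plane depending on the sign of $t$. Since $z\in\Xi$ means $\arg z\in(0,2\pi/m)$, the $m$-th roots $\tau_k=ze^{2\pi ki/m}$ of $z^m$ have arguments $\arg z+2\pi k/m$ lying in the open intervals $(2\pi k/m,2\pi(k+1)/m)$. For $k=0,\ldots,m/2-1$ these lie in $(0,\pi)$, so $\Im\tau_k>0$, while for $k=m/2,\ldots,m-1$ they lie in $(\pi,2\pi)$, giving $\Im\tau_k<0$. In particular every pole is off the real axis, and the integrand decays like $|\tau|^{-m}$ with $m\geq 2$, so the integral converges absolutely and Jordan's lemma applies for any $t\in\R$.

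Next I would compute the residue at each simple pole using
\[
\mathop{\mathrm{Res}}_{\tau=\tau_k}\frac{1}{\tau^m-z^m}=\frac{1}{m\tau_k^{m-1}}=\frac{e^{-2\pi k(m-1)i/m}}{mz^{m-1}}=\frac{e^{2\pi ki/m}}{mz^{m-1}},
\]
where the last equality uses $e^{-2\pi k(m-1)i/m}=e^{-2\pi ki}e^{2\pi ki/m}=e^{2\pi ki/m}$. For $t\leq 0$, close the contour in the upper half plane (where $e^{-it\tau}$ is bounded), picking up the poles $\tau_0,\ldots,\tau_{m/2-1}$ with positive orientation:
\[
\int_{-\infty}^{+\infty}\frac{e^{-it\tau}}{\tau^m-z^m}\,d\tau=2\pi i\sum_{k=0}^{m/2-1}\frac{e^{2\pi ki/m}}{mz^{m-1}}e^{-it\tau_k}=\frac{2\pi i}{mz^{m-1}}\sum_{k=0}^{m/2-1}e^{2\pi ki/m+i|t|\tau_k},
\]
since $-it=i|t|$ when $t\leq 0$. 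This is exactly the right-hand side of \eqref{eq_residue}.

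For $t>0$, close the contour in the lower half plane with clockwise orientation and collect the poles $\tau_{m/2},\ldots,\tau_{m-1}$. The crucial observation is the symmetry $\tau_{j+m/2}=ze^{2\pi ji/m}e^{i\pi}=-\tau_j$ and $e^{2\pi(j+m/2)i/m}=-e^{2\pi ji/m}$. Substituting $k=j+m/2$ in the residue sum yields
\[
\int_{-\infty}^{+\infty}\frac{e^{-it\tau}}{\tau^m-z^m}\,d\tau=-2\pi i\sum_{j=0}^{m/2-1}\frac{-e^{2\pi ji/m}}{mz^{m-1}}e^{-it(-\tau_j)}=\frac{2\pi i}{mz^{m-1}}\sum_{j=0}^{m/2-1}e^{2\pi ji/m+it\tau_j},
\]
and since $t>0$ we have $it\tau_j=i|t|\tau_j$, again producing the claimed formula. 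The two cases therefore unify into a single expression in $|t|$.

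There is no serious obstacle here; the argument is a routine residue computation. The only bookkeeping that requires care is (a) verifying that exactly the first $m/2$ roots lie in the open upper half plane under the hypothesis $z\in\Xi$ (which makes the convergence and the half-plane choice unambiguous), and (b) recognizing the algebraic identity that makes the sum over lower-half-plane poles for $t>0$ reproduce the same expression as the upper-half-plane sum for $t<0$ through the substitution $k\mapsto k+m/2$. Decay on the semicircular arcs is immediate since $|\tau^m-z^m|\gtrsim |\tau|^m$ for $|\tau|$ large and $m\geq 2$.
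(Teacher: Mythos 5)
Your proof is correct and follows essentially the same approach as the paper: identify the poles $\tau_k = ze^{2\pi ki/m}$, verify that exactly the first $m/2$ lie in the upper half plane, close the contour in the appropriate half plane depending on the sign of $t$, compute residues, and for $t>0$ use the reindexing $\tau_{j+m/2}=-\tau_j$ to unify with the $t\le 0$ case. The small convergence check you included (that the integrand is $O(|\tau|^{-m})$ with $m\ge 2$) is implicit in the paper but worth stating.
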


\begin{proof}
To show \eqref{eq_residue} we shall use the residue calculus. To that end writing $z=|z| e^{i\varphi}$, $0<\varphi<2\pi/m$, we obtain that the poles of the rational function $\C\ni\tau\mapsto (\tau^m-z^m)^{-1}$ are given by
\[
\tau_k=|z|e^{i(m\varphi+2\pi k)/m}=ze^{2\pi k i/m},\quad k=0,\dots, m-1.
\]
Notice that the poles are simple, none of them are on the real line,  the poles $\tau_k$, $k=0,\dots, m/2-1$, are in the upper half plane, and the poles $\tau_k$,  $k=m/2, \dots, m-1$, are in the lower half plane.

We have $|e^{-it\tau}|=e^{t \textrm{Im}\tau}$.  Let first $t\le 0$. Deforming the contour of integration in the upper half plane, we get
\begin{align*}
\int_{-\infty}^{+\infty} \frac{1}{\tau^m-z^m}e^{-it\tau}d\tau&=2\pi i \sum_{k=0}^{m/2-1} \textrm{Res}\bigg( \frac{e^{-it\tau}}{\tau^m-z^m}; \tau_k \bigg)=2\pi i \sum_{k=0}^{m/2-1} \frac{e^{-it\tau_k}}{m\tau_k^{m-1}}\\
&=\frac{2\pi i}{mz^{m-1}} \sum_{k=0}^{m/2-1}  e^{2\pi k i/m-it\tau_k},\quad t\le 0.
\end{align*}
Let now $t>0$. Then by deforming the contour of integration in the lower half plane, we conclude that
\begin{align*}
\int_{-\infty}^{+\infty} \frac{1}{\tau^m-z^m}&e^{-it\tau}d\tau=-2\pi i \sum_{k=m/2}^{m-1} \textrm{Res}\bigg( \frac{e^{-it\tau}}{\tau^m-z^m}; \tau_k \bigg)=-2\pi i \sum_{k=m/2}^{m-1} \frac{e^{-it\tau_k}}{m\tau_k^{m-1}}\\
&=-\frac{2\pi i}{mz^{m-1}} \sum_{k=m/2}^{m-1}  e^{2\pi k i/m-it\tau_k}= -\frac{2\pi i}{mz^{m-1}} \sum_{k=0}^{m/2-1}  e^{\pi i}e^{2\pi k i/m-it\tau_{m/2+k}}\\
&=\frac{2\pi i}{mz^{m-1}} \sum_{k=0}^{m/2-1}  e^{2\pi k i/m+it\tau_k},\quad t> 0.
\end{align*}
Thus, \eqref{eq_residue} follows. The proof of Lemma \ref{lem_residue} is complete.
\end{proof}

Let $z\in \Xi$. Then by \eqref{eq_residue}, we obtain that
\[
m_z(\tau)=\frac{ i}{mz^{m-1}}\sum_{k=0}^{m/2-1} e^{2\pi k i/m} \int_{-\infty}^{+\infty} e^{i|t|\tau_k+it\tau}d t.
\]
Therefore, we have the following formula for the resolvent of $P$,
\begin{equation}
\label{eq_resolvent_formula}
(P-z^m)^{-1}=m_z(Q)=\frac{ i}{mz^{m-1}}\sum_{k=0}^{m/2-1} e^{2\pi k i/m} \int_{-\infty}^{+\infty} e^{i|t|\tau_k} e^{itQ}d t.
\end{equation}
Here $\tau_k= ze^{2\pi k i/m}$ and $\Im \tau_k>0$, $k=0,1,\dots, m/2-1$.

\subsection{Consequences of the spectral projection estimates}

Assume that, for each $x\in M$, the cosphere $\Sigma_x=\{\xi\in T_x^*M: q(x,\xi)=1\}$ is strictly convex. Consider the $k$'th spectral cluster of the operator $Q$,
\[
\{\mu_j\in \textrm{spec}(Q):\mu_j\in [k-1,k)\},
\]
and denote by $\chi_k$ the spectral projection operator on the space, generated by the eigenfunctions, corresponding to the $k$th spectral cluster,
\[
\chi_k f=\sum_{\mu_j\in [k-1,k)} E_jf,\quad f\in C^\infty(M).
\]
Here $E_j:L^2(M)\to L^2(M)$ is the orthogonal projection onto the space, spanned by  $e_j$, i.e.
\[
E_jf(x)=\bigg(\int_M f(y)\overline{e_j(y)}d\mu(y)\bigg) e_j(x).
\]

It was shown in  \cite{Seeger_Sogge_1989}, see also \cite[Theorem 5.1.1]{Sogge_book}, that for $p\ge \frac{2(n+1)}{n-1}$, we have
\begin{equation}
\label{eq_spectral_cluster}
\|\chi_k\|_{L^2(M)\to L^p(M)}\le Ck^{\sigma(p)},\quad \sigma(p)=n\bigg(\frac{1}{2}-\frac{1}{p}\bigg)-\frac{1}{2},
\end{equation}
where $C>0$ is a constant,  and the dual estimate,
\begin{equation}
\label{eq_spectral_cluster_2}
\|\chi_k\|_{L^{p'}(M)\to L^2(M)}\le Ck^{\sigma(p)}, \quad p'=\frac{p}{p-1}.
\end{equation}

Similarly to \cite[Lemma 2.3]{Bourgain_Shao_Sogge_Yao}, we have the following consequence of the spectral clusters estimates \eqref{eq_spectral_cluster} and \eqref{eq_spectral_cluster_2}.

\begin{lem}
\label{lem_truncated_eq}
Assume that, for each $x\in M$, the cosphere $\Sigma_x=\{\xi\in T_x^*M: q(x,\xi)=1\}$ is strictly convex. Let $\alpha\in C([0,\infty),\C)$ and define the operators $\alpha_k(Q)$ as follows,
\[
\alpha_k(Q)f=\sum_{\mu_j\in [k-1,k)}\alpha(\mu_j) E_j f, \quad f\in C^\infty(M),
\]
$k=1,2,\dots$. Then if $p\ge \frac{2(n+1)}{n-1}$, we get
\begin{equation}
\label{eq_truncated}
\|\alpha_k(Q)f\|_{L^p(M)}\le C k^{2\sigma(p)} (\sup_{\tau\in [k-1;k)}|\alpha(\tau)|)\|f\|_{L^{\frac{p}{p-1}}(M)},\ \sigma(p)=n\bigg(\frac{1}{2}-\frac{1}{p}\bigg)-\frac{1}{2},
\end{equation}
where $C>0$ is a constant independent of the function $\alpha$.
\end{lem}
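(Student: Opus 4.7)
The plan is to factor $\alpha_k(Q)$ through the spectral cluster projector $\chi_k$ and then combine the two cluster estimates \eqref{eq_spectral_cluster} and \eqref{eq_spectral_cluster_2} with the obvious $L^2\to L^2$ bound on $\alpha_k(Q)$. This is exactly the pattern used in \cite{Bourgain_Shao_Sogge_Yao} for the Laplace--Beltrami case; the $k$th cluster plays the role of a ``frequency localization'' and the multiplier $\alpha$ only sees the eigenvalues inside $[k-1,k)$.

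First, since $\alpha_k(Q)f$ is a linear combination of the eigenfunctions $e_j$ with $\mu_j\in[k-1,k)$, and the same is true of $\chi_k f$, the identities
\[
\alpha_k(Q)=\chi_k\,\alpha_k(Q)=\alpha_k(Q)\,\chi_k=\chi_k\,\alpha_k(Q)\,\chi_k
\]
hold on $C^\infty(M)$; each side acts as $\sum_{\mu_j\in[k-1,k)}\alpha(\mu_j)E_j$. Next, the orthogonality of the $E_j$'s on $L^2(M)$ gives the trivial bound
\[
\|\alpha_k(Q)g\|_{L^2(M)}^2=\sum_{\mu_j\in[k-1,k)}|\alpha(\mu_j)|^2\|E_j g\|_{L^2(M)}^2\le \Big(\sup_{\tau\in[k-1,k)}|\alpha(\tau)|\Big)^2\|g\|_{L^2(M)}^2,
\]
so $\alpha_k(Q)$ is bounded $L^2\to L^2$ with norm at most $\sup_{[k-1,k)}|\alpha|$.

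Then I would simply chain the three estimates. Writing $\alpha_k(Q)f=\chi_k\bigl(\alpha_k(Q)(\chi_k f)\bigr)$, applying \eqref{eq_spectral_cluster} to the outer $\chi_k$, the $L^2\to L^2$ bound above to the middle factor, and \eqref{eq_spectral_cluster_2} to the inner $\chi_k$, we obtain
\[
\|\alpha_k(Q)f\|_{L^p(M)}\le Ck^{\sigma(p)}\|\alpha_k(Q)(\chi_k f)\|_{L^2(M)}\le Ck^{\sigma(p)}\Big(\sup_{[k-1,k)}|\alpha|\Big)\|\chi_k f\|_{L^2(M)},
\]
and then
\[
\|\chi_k f\|_{L^2(M)}\le Ck^{\sigma(p)}\|f\|_{L^{p/(p-1)}(M)},
\]
which together give \eqref{eq_truncated} with the exponent $2\sigma(p)$ as claimed.

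There is essentially no obstacle here beyond correctly invoking \eqref{eq_spectral_cluster} and its dual \eqref{eq_spectral_cluster_2}; the strict convexity hypothesis on the cospheres $\Sigma_x$ is needed only to justify these two cluster estimates, which are cited from \cite{Seeger_Sogge_1989}. The only mild point of care is to ensure that the constant $C$ in \eqref{eq_truncated} is indeed independent of $\alpha$, which is immediate since $\alpha$ enters only through the $L^2$-norm of the middle operator and no other hidden dependence on $\alpha$ has been introduced.
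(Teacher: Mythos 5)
Your proposal is correct and follows essentially the same route as the paper: factor through $\chi_k$, apply the cluster estimate \eqref{eq_spectral_cluster} to bound $L^2\to L^p$, use the spectral-theoretic $L^2\to L^2$ bound $\sup_{[k-1,k)}|\alpha|$, and finish with the dual estimate \eqref{eq_spectral_cluster_2}. The only cosmetic difference is that you make the $L^2\to L^2$ step an explicit intermediate operator bound, whereas the paper folds that computation directly into the chain of inequalities.
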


\begin{proof}
First notice that $\alpha_k(Q)=\chi_k\circ\alpha_k(Q)$. Let $p\ge \frac{2(n+1)}{n-1}$. Then using  the spectral clusters estimates \eqref{eq_spectral_cluster} and \eqref{eq_spectral_cluster_2}, we obtain that
\begin{align*}
\|\alpha_k(Q)f\|_{L^p(M)}&\le C k^{\sigma(p)}\|\alpha_k(Q)f\|_{L^2(M)}\\
&=C k^{\sigma(p)}\bigg( \sum_{\mu_j\in [k-1,k)}|\alpha(\mu_j)|^2\|E_jf\|_{L^2(M)}^2  \bigg)^{1/2}\\
&\le C k^{\sigma(p)} (\sup_{\tau\in [k-1,k)}|\alpha(\tau)|) \bigg( \sum_{\mu_j\in [k-1,k)}\|E_jf\|_{L^2(M)}^2  \bigg)^{1/2}\\
&=  C k^{\sigma(p)} (\sup_{\tau\in [k-1,k)}|\alpha(\tau)|) \|\chi_kf\|_{L^2(M)}\\
&\le C k^{2\sigma(p)} (\sup_{\tau\in [k-1,k)}|\alpha(\tau)|) \|f\|_{L^\frac{p}{p-1}(M)}.
\end{align*}
\end{proof}

\begin{lem}
\label{lem_based_on_littlewood_paley}
Assume that  for each $x\in M$, the cosphere $\Sigma_x=\{\xi\in T_x^*M: q(x,\xi)=1\}$ is strictly convex. Let $\alpha\in C([0,\infty),\C)$ be such that
\begin{equation}
\label{eq_A_sup}
A=\sup_{\tau\in[0,\infty)}(1+\tau^m)|\alpha(\tau)|<\infty.
\end{equation}
 Then we have
\begin{equation}
\label{eq_Littlewood_Paley}
\|\alpha(Q)f\|_{L^{\frac{2n}{n-m}}(M)}\le C  A\|f\|_{L^{\frac{2n}{n+m}}(M)},
\end{equation}
where $\alpha(Q)$ is the operator defined by
\[
\alpha(Q)f=\sum_{j=1}^\infty\alpha(\mu_j) E_j f, \quad f\in C^\infty(M),
\]
and $C>0$ is a constant independent of the function $\alpha$.
\end{lem}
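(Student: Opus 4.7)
My plan is to factor $\alpha(Q)$ through the square-root resolvent $(1+P)^{-1/2}$ and reduce the bound to two applications of Sobolev embedding, thereby bypassing any direct appeal to the spectral cluster decomposition of Lemma~\ref{lem_truncated_eq}. Set $\beta(\tau) := (1+\tau^m)\alpha(\tau)$. The hypothesis \eqref{eq_A_sup} reads $\sup_{\tau\ge 0}|\beta(\tau)|\le A$, and since $P=Q^m$ and $(1+P)^{-1/2}$ is a Borel function of $Q$ commuting with $\beta(Q)$, the spectral theorem yields the operator identity
\[
\alpha(Q) \;=\; (1+P)^{-1/2}\,\beta(Q)\,(1+P)^{-1/2}, \qquad \|\beta(Q)\|_{L^2(M)\to L^2(M)}\le A.
\]

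Next I would invoke Seeley's theorem on complex powers of elliptic self-adjoint operators---the same ingredient the paper uses to place $Q = P^{1/m}$ in $\Psi^1_{\textrm{cl}}(M)$---to identify $(1+P)^{-1/2}$ as a classical pseudodifferential operator of order $-m/2$ on $M$. By the standard $L^q$-continuity of classical pseudodifferential operators of negative order, $(1+P)^{-1/2}:L^q(M)\to W^{m/2,q}(M)$ is bounded for every $1<q<\infty$. The exponents of the lemma are then exactly what Sobolev embedding wants: one checks
\[
\frac{1}{p'}-\frac{m/2}{n}=\frac{n+m}{2n}-\frac{m}{2n}=\frac{1}{2}, \qquad \frac{1}{2}-\frac{m/2}{n}=\frac{n-m}{2n}=\frac{1}{p},
\]
for $p=\frac{2n}{n-m}$ and $p'=\frac{2n}{n+m}$, so $W^{m/2,p'}(M)\hookrightarrow L^2(M)$ and $W^{m/2,2}(M)\hookrightarrow L^p(M)$. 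Composing the three continuous maps $L^{p'}(M)\xrightarrow{(1+P)^{-1/2}} L^2(M)\xrightarrow{\beta(Q)} L^2(M)\xrightarrow{(1+P)^{-1/2}} L^p(M)$ yields $\|\alpha(Q)f\|_{L^p(M)}\le CA\|f\|_{L^{p'}(M)}$ with constant $C$ independent of $\alpha$.

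The only nontrivial step is the PsDO identification of $(1+P)^{-1/2}$, a standard consequence of Seeley's calculus that depends on $M$ being compact and boundaryless. As an alternative proof more in keeping with the lemma's title, one may instead decompose $\alpha(Q)f = \sum_{k\ge 1}\alpha_k(Q)f$ by unit spectral bands and apply Lemma~\ref{lem_truncated_eq} at $p=\frac{2n}{n-m}$---where $2\sigma(p)=m-1$ combines with $\sup_{[k-1,k)}|\alpha|\lesssim A/k^m$ to give $\|\alpha_k(Q)f\|_{L^p}\lesssim A k^{-1}\|f\|_{L^{p'}}$---closing with a Littlewood--Paley square-function inequality for the unit bands $\{\chi_k\}$ of $Q$ and Minkowski's inequality for $p\ge 2$; the resulting $\ell^2$ sum $\sum_k k^{-2}$ converges and gives the same estimate.
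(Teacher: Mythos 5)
Your main argument is correct, and it takes a genuinely different (and in some ways stronger) route than the paper. The paper's proof uses a dyadic decomposition of the spectral bands, applies the unit spectral-cluster estimate of Lemma~\ref{lem_truncated_eq} within each dyadic block, and closes with the one-dimensional Littlewood--Paley theorem and Minkowski's inequality. Your factorization $\alpha(Q)=(1+P)^{-1/2}\beta(Q)(1+P)^{-1/2}$ with $\|\beta(Q)\|_{L^2\to L^2}\le A$, combined with Seeley's theorem ($(1+P)^{-1/2}\in\Psi^{-m/2}_{\mathrm{cl}}(M)$), $L^q$-boundedness of zero-order PsDOs, and the two critical Sobolev embeddings $W^{m/2,\,2n/(n+m)}\hookrightarrow L^2$ and $W^{m/2,\,2}\hookrightarrow L^{2n/(n-m)}$, is a softer argument that sidesteps the spectral-cluster estimates entirely. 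In particular, it does not use the strict convexity of the cospheres $\Sigma_x$ at all, whereas the paper's route inherits that hypothesis through the Seeger--Sogge bound \eqref{eq_spectral_cluster}. (The hypothesis is of course still needed elsewhere in the proof of Theorem~\ref{thm_main}; your observation just shows it is superfluous for this particular lemma.) The exponent arithmetic checks out, the critical Sobolev embeddings are valid on a compact manifold since $n>m$ keeps both target exponents finite, and the composition argument is clean. This is a correct and more elementary proof.

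Your suggested ``alternative proof more in keeping with the lemma's title'' does, however, have a genuine gap. You propose to bound $\|\alpha(Q)f\|_{L^p}$ by $\bigl\|\bigl(\sum_k |\alpha_k(Q)f|^2\bigr)^{1/2}\bigr\|_{L^p}$ using a square-function inequality for the \emph{unit} spectral bands $\{\chi_k\}$, and then sum $\sum_k k^{-2}$. But for $p=\tfrac{2n}{n-m}>2$ the required inequality is the ``reverse'' Littlewood--Paley bound for a family of disjoint non-dyadic intervals, and this direction does not hold: the Rubio de Francia theorem gives only $\bigl\|\bigl(\sum_k |S_{I_k}g|^2\bigr)^{1/2}\bigr\|_{L^p}\lesssim\|g\|_{L^p}$ for $p\ge 2$, not the converse. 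Equivalently, in the duality formulation one would need $\bigl\|\bigl(\sum_k|\chi_k h|^2\bigr)^{1/2}\bigr\|_{L^{p'}}\lesssim\|h\|_{L^{p'}}$ for $p'<2$, which is false. This is precisely why the paper groups the unit bands into dyadic blocks, controls each block by brute force via the $k^{-1}$ decay against the $\sim 2^{j-1}$ bands per block, and only then invokes the (dyadic) Littlewood--Paley theorem. Since your primary argument is complete and correct, this is a secondary issue, but as written the alternative route does not close.
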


\begin{proof}
To establish \eqref{eq_Littlewood_Paley}, we shall follow  \cite[Lemma 2.3]{Bourgain_Shao_Sogge_Yao}, see also \cite{Kenig_Ruiz_Sogge}, and use the one dimensional Littlewood--Paley theory. To that end, let
\[
\chi(t)=\begin{cases} 1,& t\in [1/2,1),\\
0,& t\notin [1/2,1),
\end{cases}
\]
be the characteristic function of the interval $[1/2,1)$.  Setting $\chi_j(\tau)=\chi(2^{-j}\tau)$, we obtain the dyadic partition  of unity in $[0,\infty)$,
$\chi_0(\tau)+\sum_{j=1}^\infty\chi_j(\tau)=1$, where $\chi_0(\tau)=1$ when $\tau\in [0,1)$, and $\chi_0(\tau)=0$ otherwise.

Define $\alpha_j(\tau)=\alpha(\tau)\chi_j(\tau)$, $j=0,1,\dots$. Assume that we have proved that
\begin{equation}
\label{eq_Littlewood_Paley_j}
\|\alpha_j(Q)f\|_{L^{\frac{2n}{n-m}}(M)}\le S \|f\|_{L^{\frac{2n}{n+m}}(M)},\quad j=0,1,\dots,
\end{equation}
with some constant $S>0$.
By the Littlewood--Paley theorem and Minkowski's inequality, we conclude from \eqref{eq_Littlewood_Paley_j} that
\begin{equation}
\label{eq_Littlewood_Paley_inside}
\|\alpha(Q)f\|_{L^{\frac{2n}{n-m}}(M)}\le C_{q,p}  S\|f\|_{L^{\frac{2n}{n+m}}(M)},
\end{equation}
where $C_{q,p}>0$ depends on $q$ and $p$ only,
see  \cite{Kenig_Ruiz_Sogge} and \cite{Ruiz_lecture_notes}.
Let us present these arguments for the convenience of the reader.  We shall write  $p=\frac{2n}{n+m}$ and $q=\frac{2n}{n-m}$. Then $1<p< 2< q$. As $q>1$, by Littlewood--Paley theorem, we get
\begin{align*}
\|\alpha(Q)f\|_{L^q(M)}&\le C_q\bigg\|\bigg(\sum_{j=0}^\infty |\alpha_j(Q)f|^2\bigg)^{1/2}\bigg\|_{L^q(M)}\\
&=C_q \bigg\|\sum_{j=0}^\infty |\alpha_j(Q)f|^2\bigg\|_{L^{q/2}(M)}^{1/2}:=I_1.
\end{align*}
As $q/2\ge 1$, we may write from Minkowski's inequality that
\begin{align*}
I_1\le C_q \bigg(\sum_{j=0}^\infty \| |\alpha_j(Q)f|^2\|_{L^{q/2}(M)}\bigg)^{1/2}= C_q \bigg(\sum_{j=0}^\infty \| \alpha_j(Q)f\|_{L^{q}(M)}^2\bigg)^{1/2}:=I_2.
\end{align*}
As $\chi_j=\chi_j^2$, $j=0,1,\dots$, it follows from \eqref{eq_Littlewood_Paley_j} that
\begin{align*}
I_2&\le C_q S \bigg(\sum_{j=0}^\infty \| \chi_j(Q)f\|_{L^{p}(M)}^2\bigg)^{1/2}\\
&=C_qS\bigg( \bigg\| \bigg\{\int_M |\chi_j(Q)f(x)|^pd\mu(x)\bigg\}\bigg\|_{l^{2/p}}  \bigg)^{1/p}:=I_3,
\end{align*}
where $\|\{a_j\}\|_{l^{2/p}}$ denotes the $l^{2/p}$--norm of the sequence $\{a_j\}$. Since $2/p>1$, by Minkowski's inequality,
\begin{align*}
I_3&\le C_qS\bigg( \int_M \| \{ |\chi_j(Q)f|^p\} \|_{l^{2/p}} d\mu  \bigg)^{1/p} =C_qS \bigg\| \bigg(\sum_{j=0}^\infty |\chi_j(Q)f|^2\bigg)^{1/2}  \bigg\|_{L^p(M)}\\
&\le C_qC_pS \|f\|_{L^p(M)},
\end{align*}
which shows \eqref{eq_Littlewood_Paley_inside}.

Thus, we are left with proving \eqref{eq_Littlewood_Paley_j}. Let $f\in C^\infty(M)$. For $j=1,2,\dots$, we write
\begin{align*}
\alpha_j(Q) f&=\sum_{l=1}^\infty \alpha_j(\mu_l)E_l f=\sum_{\mu_l\in [2^{j-1},2^j)} \alpha_j(\mu_l)E_l f\\
&=\sum_{r=1}^{2^j-2^{j-1}} \sum_{\mu_l\in [2^{j-1}+r-1, 2^{j-1}+r) } \alpha_j(\mu_l)E_l f=\sum_{r=1}^{2^{j-1}} \alpha_{j,2^{j-1}+r}(Q)f,
\end{align*}
where the truncated operator $\alpha_{j,k}(Q)$ is given by
\[
\alpha_{j,k}(Q)f=\sum_{\mu_l\in [k-1,k)} \alpha_j(\mu_l)E_l f.
\]

Since $\frac{2n}{n-m}\ge \frac{2(n+1)}{n-1}$, by \eqref{eq_truncated} and the fact that $\sigma(2n/(n-m))=(m-1)/2$, we get
\begin{align*}
\|&\alpha_j(Q)f\|_{L^{\frac{2n}{n-m}}(M)}\le \sum_{r=1}^{2^{j-1}} \|\alpha_{j,2^{j-1}+r}(Q)f\|_{L^{\frac{2n}{n-m}}(M)} \\
&\le C \sum_{r=1}^{2^{j-1}} (2^{j-1}+r)^{m-1}(\sup_{\tau\in [2^{j-1}+r-1, 2^{j-1}+r)} |\alpha(\tau)|)\|f\|_{L^{\frac{2n}{n+m}}(M)},\quad j=1,2,\dots.
\end{align*}
Now using \eqref{eq_A_sup}, we obtain that
\begin{equation}
\label{eq_littlewood_parts_1}
\begin{aligned}
\|\alpha_j(Q)f\|_{L^{\frac{2n}{n-m}}(M)}&\le C A\sum_{r=1}^{2^{j-1}}(2^{j-1}+r)^{m-1} \frac{1}{(2^{j-1}+r-1)^m}\|f\|_{L^{\frac{2n}{n+m}}(M)}\\
&\le CA \sum_{r=1}^{2^{j-1}} \frac{(2^{j-1}2)^{m-1}}{(2^{j-1})^m} \|f\|_{L^{\frac{2n}{n+m}}(M)}
\le CA \|f\|_{L^{\frac{2n}{n+m}}(M)},
\end{aligned}
\end{equation}
for $ j=1,2,\dots$.
We also have
\[
\alpha_0(Q)f=\sum_{\mu_l\in [0,1)}\alpha(\mu_l)E_l f,
\]
and therefore, it follows from  \eqref{eq_truncated} that
\begin{equation}
\label{eq_littlewood_parts_2}
\|\alpha_0(Q)f\|_{L^{\frac{2n}{n-m}}(M)}\le C (\sup_{\tau\in [0,1)} |\alpha(\tau)|)\|f\|_{L^{\frac{2n}{n+m}}(M)}\le CA \|f\|_{L^{\frac{2n}{n+m}}(M)}.
\end{equation}
We obtain \eqref{eq_Littlewood_Paley_j} as a consequence of  \eqref{eq_littlewood_parts_1} and \eqref{eq_littlewood_parts_2}. The proof of Lemma \ref{lem_based_on_littlewood_paley} is complete.
\end{proof}

\subsection{Derivation of the resolvent estimate with bounded $|z|$}
Let us first prove the resolvent estimate \eqref{eq_resolvent_est} for all  $z\in \Xi_\delta$ when $|z|$ is bounded by a fixed constant, i.e. $
z\in \Xi_{\delta}\cap \{z\in \C:|z|\le D\}$.
To that end, consider the multiplier
\[
m_z(\tau)=\frac{1}{\tau^m-z^m},\quad \tau\in [0,\infty).
\]
First notice that $\tau^m-z^m\ne 0$ for all $\tau\ge 0$ and all $z\in \C$ with $\arg(z)\in (0,2\pi/m)$. Then by continuity of $|\tau^m-z^m|$ on a compact set, we have that  for any $A, D, \delta>0$, there exists a constant $C>0$ such that   $|\tau^m-z^m|\ge 1/C$ for  $\tau\in [0,A]$ and $z\in \Xi_{\delta}\cap \{z\in \C:|z|\le D\}$.
For $\tau$ large and $z\in \Xi_{\delta}\cap \{z\in \C:|z|\le D\}$, we have $|\tau^m-z^m|\sim \tau^m$, and therefore, we conclude that
\[
|m_z(\tau)|\le C_{\delta,D}(1+\tau^m)^{-1}
\]
uniformly in $z\in \Xi_{\delta}\cap \{z\in \C:|z|\le D\}$.  By appealing to Lemma \ref{lem_based_on_littlewood_paley}, we obtain the resolvent estimate \eqref{eq_resolvent_est} for $z\in \Xi_{\delta}\cap \{z\in \C:|z|\le D\}$.

\begin{rem}
\label{rem_non_uniform_est}
Notice that applying Lemma \ref{lem_based_on_littlewood_paley}, we can immediately obtain the (non-uniform) estimate
\[
\|u\|_{L^{\frac{2n}{n-m}}(M)}\le C_\zeta \|(P-\zeta)u\|_{L^{\frac{2n}{n+m}}(M)},
\]
for all $\zeta\in \C\setminus [0,\infty)$ and $u\in C^\infty(M)$.
\end{rem}

\subsection{Uniform bounds for a local term in the case of  unbounded $|z|$}
Let $z\in \Xi_{\delta}\cap \{z\in \C:|z|\ge 1\}$. Here it will be convenient to use the representation \eqref{eq_resolvent_formula} for the multiplier $m_z(Q)$.
To define the localized version of $m_z(Q)$, we fix a function $\rho\in C^\infty(\R)$ satisfying
\begin{equation}
\label{eq_def_rho_eps}
\rho(t)=\begin{cases}
1, & |t|\le \varepsilon/2,\\
0, & |t|\ge \varepsilon,
\end{cases}
\end{equation}
where $0<\varepsilon<1/2$ will be specified later.  In view of \eqref{eq_resolvent_formula}, the localized version of $m_z(Q)$ is given by
\begin{equation}
\label{eq_m_z_loc}
m_z^{\textrm{loc}}(Q)f=\frac{ i}{mz^{m-1}}\sum_{k=0}^{m/2-1} e^{2\pi k i/m} \int_{-\infty}^{+\infty} \rho(t)e^{i|t|\tau_k} e^{itQ}fd t,\quad  f\in C^\infty(M).
\end{equation}
Here $\tau_k= ze^{2\pi k i/m}$ and $\Im \tau_k>0$, $k=0,1,\dots, m/2-1$.

To prove the resolvent estimate \eqref{eq_resolvent_est} for $z\in \Xi_{\delta}\cap \{z\in \C:|z|\ge 1\}$, let us first establish
this estimate for $m_z^{\textrm{loc}}(Q)$, i.e.
\begin{equation}
\label{eq_resolvent_with_b_loc}
\| m_z^{\textrm{loc}}(Q)f\|_{L^{\frac{2n}{n-m}}(M)}\le C\|f\|_{L^\frac{2n}{n+m}(M)}.
\end{equation}
When doing so we shall use a dyadic partition of the $t$--interval in the definition \eqref{eq_m_z_loc} of $m_z^{\textrm{loc}}(Q)$. To that end let $\psi\in C^\infty_0(\R)$ be such that $\supp(\psi)\subset [-2,2]$, $\psi=1$ on $[-1,1]$, and  $\psi$ is even. Define $\beta(t)=\psi(t)-\psi(2t)$. Thus,
\[
 \beta(t)=0,\quad |t|\notin [1/2,2],
 \]
 and
\[
\sum_{j=-\infty}^{+\infty} \beta (2^{-j}t)=1,\quad t\ne 0.
\]
It will be convenient to write,
\[
\tilde \rho(t)=1-\sum_{j=0}^{+\infty} \beta (2^{-j}t)\in C^\infty_0(\R).
\]
Notice that $\tilde \rho(t)=0$ when $|t|\ge 1$.

For a given $z\in \Xi_{\delta}\cap \{z\in \C:|z|\ge 1\}$, we define the multipliers
\begin{equation}
\label{eq_S_j}
S_{z,j}(\tau)=\frac{ i}{mz^{m-1}}\sum_{k=0}^{m/2-1} e^{2\pi k i/m} \int_{-\infty}^{+\infty} \beta (2^{-j}|z|t) \rho(t)e^{i|t|\tau_k} e^{it\tau}d t,\quad  j=0,1,2,\dots,
\end{equation}
and
\begin{equation}
\label{eq_S_0}
\tilde S_{z}(\tau)=\frac{ i}{mz^{m-1}}\sum_{k=0}^{m/2-1} e^{2\pi k i/m} \int_{-\infty}^{+\infty} \tilde\rho (|z|t) \rho(t)e^{i|t|\tau_k} e^{it\tau}d t.
\end{equation}

We have
\begin{equation}
\label{eq_S_j=0}
S_{z,j}=0 \quad \text{if}\quad 2^{-j}|z|\le 1.
\end{equation}
Indeed, if $|t|\le \varepsilon$, then $2^{-j}|z||t|<1/2 $ and therefore, $\beta(2^{-j}|z|t)=0$.

The bound \eqref{eq_resolvent_with_b_loc} follows once we show that there is a uniform constant $C$ so that for all $z\in \Xi_{\delta}\cap \{z\in \C:|z|\ge 1\}$, we have
\begin{equation}
\label{eq_estim_S_j}
\|S_{z,j}(Q)f\|_{L^{\frac{2n}{n-m}}(M)}\le C 2^{j\frac{2n-m-nm}{2n}}\|f\|_{L^\frac{2n}{n+m}(M)},\quad  j=0,1,\dots,
\end{equation}
and
\begin{equation}
\label{eq_estim_S_0}
\| \tilde S_{z}(Q)f\|_{L^{\frac{2n}{n-m}}(M)}\le C\|f\|_{L^\frac{2n}{n+m}(M)}.
\end{equation}
Let us start with establishing the estimate \eqref{eq_estim_S_0}.  When doing so, we shall follow \cite{Shao_Yao} and obtain the following result.

\begin{lem}
\label{lem_symbol}
The multiplier $\tilde S_{z}$ belongs to the symbol class $ S^{-m}(\R)$ uniformly in $z\in \C$, $|z|\ge 1$, i.e.
\begin{equation}
\label{eq_estim_symbol_S_0}
|d_\tau^j \tilde S_{z}(\tau)|\le C_j(1+|\tau|)^{-m-j},\quad j=0,1,2,\dots,
\end{equation}
with the constants $C_j$ independent of $z$.
\end{lem}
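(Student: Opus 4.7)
The plan is to split the analysis by comparing $|\tau|$ with $|z|$.  For $|\tau|\le |z|$, I would use a crude pointwise bound. The cutoff $\tilde\rho(|z|t)$ confines the $t$-integral to $|t|\le 1/|z|$, and on this interval $|e^{i|t|\tau_k}|=e^{-|t|\Im\tau_k}\le e^{|t||z|}\le e$ uniformly in $z\in\C$, $|z|\ge 1$.  Differentiating \eqref{eq_S_0} $j$ times in $\tau$ brings down $(it)^j$, so
\[
|d_\tau^j \tilde S_z(\tau)|\le \frac{C}{|z|^{m-1}}\int_{|t|\le 1/|z|}|t|^j\,dt \le \frac{C}{|z|^{m+j}}\le C(1+|\tau|)^{-m-j}.
\]

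For $|\tau|\ge |z|$, I would integrate by parts repeatedly in $t$. Since $|t|$ is not smooth at $0$, I split $\R$ into the half-lines $t>0$ and $t<0$ and perform $N$ integrations by parts on each, using $e^{it\tau}=(i\tau)^{-1}\partial_t e^{it\tau}$. The key observation is that $\tilde\rho\equiv 1$ on $[-1/2,1/2]$: since $\psi\equiv 1$ on $[-1,1]$, one has $\beta(2^{-j}s)=0$ for every $j\ge 0$ when $|s|\le 1/2$. Similarly $\rho\equiv 1$ near $0$. Hence $\tilde\rho(|z|t)\rho(t)\equiv 1$ in a neighborhood of $t=0$, so all $t$-derivatives of this cutoff vanish at $t=0$, and the boundary terms in the repeated integration by parts arise purely from differentiating $(it)^j e^{\pm it\tau_k}$, each derivative contributing at most a factor of $|\tau_k|=|z|$.

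Combining the $t\ge 0$ and $t\le 0$ boundary contributions, the $l$-th order boundary term is proportional to $(i\tau_k)^{l-j}-(-i\tau_k)^{l-j}$, which vanishes unless $l-j$ is odd. Since $m$ is even, summing in $k$ yields the roots-of-unity sum $\sum_{k=0}^{m/2-1}e^{2\pi ki(l-j+1)/m}$, which equals $m/2$ when $l-j+1\equiv 0\pmod{m}$ and vanishes otherwise. The surviving boundary terms therefore occur at $l=j+rm-1$ for $r=1,2,\dots$, and have size $|z|^{(r-1)m}/|\tau|^{j+rm}\le C/|\tau|^{m+j}$ under $|\tau|\ge |z|$. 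For the remainder after $N$ integrations by parts, a Leibniz expansion shows that each derivative of $\tilde\rho(|z|t)$ or $e^{\pm it\tau_k}$ contributes at most a factor $|z|$ while the support has measure $\lesssim 1/|z|$, so the remainder is bounded by $C|z|^{N-j-m}/|\tau|^N=C(|z|/|\tau|)^{N-j-m}/|\tau|^{m+j}\le C/|\tau|^{m+j}$ provided $N\ge m+j$. Combining the two regimes gives \eqref{eq_estim_symbol_S_0}.

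The main technical obstacle is the careful bookkeeping of the boundary terms. Happily, the flatness of $\tilde\rho$ and $\rho$ at the origin, the sign cancellation from the splitting at $t=0$, and the roots-of-unity cancellation in the sum over $k$ conspire to eliminate the potentially dangerous contributions, leaving only terms that are $O(|\tau|^{-m-j})$ in the regime $|\tau|\ge |z|$. I would follow the treatment of \cite{Shao_Yao} for the detailed execution.
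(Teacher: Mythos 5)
Your proposal is correct and follows essentially the same approach as the paper: a crude bound for small $|\tau|$ using the support restriction $|t|\le 1/|z|$ coming from $\tilde\rho(|z|t)$, and repeated integration by parts in $t$ for large $|\tau|$, exploiting the splitting at $t=0$, the fact that all derivatives of $\tilde\rho(|z|t)\rho(t)$ vanish at the origin, and the roots-of-unity cancellation in the sum over $k$. The one structural difference is the threshold: you split at $|\tau|=|z|$ and integrate by parts $N\ge m+j$ times, invoking $|z|\le |\tau|$ to absorb the leftover powers of $|z|$ in both the surviving boundary terms and the remainder, whereas the paper splits at $|\tau|=1$ and integrates by parts exactly $m+j$ times, observing that the powers of $|z|$ generated by the boundary term $B_{m+j}$ and by the remainder integral over $|t|\le 1/|z|$ cancel precisely the explicit prefactor $1/z^{m-1}$ in \eqref{eq_S_0}; that exact cancellation is what lets the paper cover the whole regime $|\tau|>1$ without ever comparing $|\tau|$ to $|z|$. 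Both versions are correct, and the size $|z|^{(r-1)m}/|\tau|^{j+rm}$ you report for the surviving boundary terms is consistent with the paper's computation (modulo a shift by one in how you index $l$).
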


\begin{proof}
Recall first that $ \tilde\rho (|z|t)=0$ when $|t|\ge 1/|z|$.  Furthermore, as $\Im\tau_k>0$, $k=0,1,\dots,m/2-1$, we conclude that $|e^{i|t|\tau_k}|\le 1$.

Let $|\tau|\le 1$. Then for $j=0,1,\dots$, we have
\begin{align*}
|d_\tau^j \tilde S_{z}(\tau)|\le \frac{C}{|z|^{m-1}}\int_{-1/|z|}^{1/|z|}|t|^j dt\le \frac{C}{|z|^{m+j}}\le C,
\end{align*}
uniformly in $z$, $|z|\ge 1$,  which shows the estimate \eqref{eq_estim_symbol_S_0} in the case  $|\tau|\le 1$.

Assume now that $|\tau|> 1$. Let us first prove the estimate \eqref{eq_estim_symbol_S_0} for $j=0$. To that end we shall integrate by parts $m$ times in the expression \eqref{eq_S_0} for $\tilde S_{z}$.

Let us first explain that all boundary terms vanish when we integrate by parts $m-1$ times in \eqref{eq_S_0}. Indeed, integrating by parts once  in \eqref{eq_S_0}, we obtain the following boundary
terms,
\begin{align*}
\frac{ i}{i\tau mz^{m-1}}\sum_{k=0}^{m/2-1} e^{2\pi k i/m}   \bigg( \tilde\rho (|z|t) \rho(t)e^{-it\tau_k} e^{it\tau}|_{t=-\infty}^{t=0}+ \tilde\rho (|z|t) \rho(t)e^{it\tau_k} e^{it\tau}|_{t=0}^{t=+\infty}\bigg)\\
=\frac{ i}{i\tau mz^{m-1}}\sum_{k=0}^{m/2-1} e^{2\pi k i/m}   \bigg(  1-1\bigg)=0.
\end{align*}
Here we have used the fact that $\tilde \rho$ and $\rho$ are compactly supported, and $\tilde \rho(0)=\rho(0)=1$.

Furthermore, since all  the derivatives of $\tilde\rho$ and $\rho$ vanish at the origin, when integrating by parts $m$ times in \eqref{eq_S_0},
the only possible contribution to the boundary terms may be written in the form $\sum_{l=1}^{m}B_l$, where
\begin{align*}
B_l=\frac{ i}{(i\tau)^l mz^{m-1}}&\sum_{k=0}^{m/2-1} e^{2\pi k i/m}  (-1)^{l-1} \bigg(\tilde\rho (|z|t) \rho(t)(-i\tau_k)^{l-1}e^{-it\tau_k} e^{it\tau}|_{t=-\infty}^{t=0}\\
&+ \tilde\rho (|z|t) \rho(t)(i\tau_k)^{l-1}e^{it\tau_k} e^{it\tau}|_{t=0}^{t=+\infty}\bigg)\\
&=\frac{ i}{(i\tau)^l mz^{m-1}}\sum_{k=0}^{m/2-1} e^{2\pi k i/m}  (-1)^{l-1} ( (-i\tau_k)^{l-1}- (i\tau_k)^{l-1}).
\end{align*}
When $l$ is odd, it is clear that $B_l=0$. Recall now that $m$ is even.  When $l$ is even and $l\ne m$, we also have $B_l=0$ due to the fact that
\begin{align*}
\sum_{k=0}^{m/2-1} e^{2\pi k i/m} (\tau_k)^{l-1}=z^{l-1} \sum_{k=0}^{m/2-1} (e^{2\pi  l i/m})^k=z^{l-1}\frac{1-e^{\pi li}}{1-e^{2\pi li/m}}=0.
\end{align*}
 Here we have used that $\tau_k=ze^{2\pi k i/m}$ and the fact that $e^{2\pi li/m}\ne 1$ when $2\le l\le m-2$.  Hence, when integrating by parts $m$ times in \eqref{eq_S_0},
the only possible contribution to the boundary terms is of the form,
\begin{equation}
\label{eq_bound_b_m}
\begin{aligned}
B_m=\frac{2}{\tau^m m z^{m-1}}\sum_{k=0}^{m/2-1} e^{2\pi k i/m} (\tau_k)^{m-1}=\frac{2}{\tau^m m}\sum_{k=0}^{m/2-1} e^{2\pi k i}=\frac{1}{\tau^m}.
\end{aligned}
\end{equation}

Let us explain how to estimate the integrals arising after having integrated by parts $m$ times in \eqref{eq_S_0}.
The worst case scenario occurs when no derivatives fall on $\rho(t)$, and the corresponding contribution can be estimated by  a constant times
\begin{equation}
\label{eq_interior_int}
\bigg|\frac{1}{\tau^m}\int_{-1/|z|}^0 |z|^{l_1} (d_t^{l_1}\tilde\rho)(|z|t)\rho(t)(-i\tau_k)^{l_2}e^{-it\tau_k}e^{it\tau}dt\bigg|\le C\frac{|z|^{m-1}}{|\tau|^m}.
\end{equation}
Here  $l_1+l_2=m$.
Then it follows from \eqref{eq_S_0}, \eqref{eq_interior_int} and \eqref{eq_bound_b_m}  that
\[
|\tilde S_{z}(\tau)|\le \frac{C}{|\tau|^m},
\]
which shows \eqref{eq_estim_symbol_S_0} for $j=0$ in the case $|\tau|> 1$.

To establish \eqref{eq_estim_symbol_S_0} for  $j=1,2,\dots$ in the case $|\tau|> 1$, we write
\begin{equation}
\label{eq_S_0_proof_2}
\begin{aligned}
d_\tau^{j}\tilde S_{z}(\tau)= \frac{ i}{ mz^{m-1}}\sum_{k=0}^{m/2-1} e^{2\pi k i/m} \bigg(
\int_{-\infty}^{0} \tilde\rho (|z|t) \rho(t)e^{-it\tau_k} (it)^{j} e^{it\tau}d t \\
 + \int_{0}^{+\infty} \tilde\rho (|z|t) \rho(t)e^{it\tau_k} (it)^{j} e^{it\tau}d t \bigg),
\end{aligned}
\end{equation}
and integrate by parts $(m+j)$ times in \eqref{eq_S_0_proof_2}. Due to the appearance of the terms $t^j$ in the integrands in \eqref{eq_S_0_proof_2}, no boundary terms arise when integrating by parts the first $j$ times.
Integrating by parts further, the contributions to the boundary terms that one has to consider would be similar to those in the case $j=0$, and therefore, we need only to discuss the integrals obtained after an integration by parts $m+j$ times  in \eqref{eq_S_0_proof_2}. The worst case scenario here occurs when  no derivatives fall on $\rho(t)$,  and the corresponding contribution to the integrals can be bounded by a constant times
\[
\bigg|\frac{1}{\tau^{m+j}}\int_{-1/|z|}^0 |z|^{l_1} (d_t^{l_1}\tilde\rho)(|z|t)\rho(t)(-i\tau_k)^{l_2}e^{-it\tau_k}t^{j-l_3}e^{it\tau}dt\bigg|\le C|z|^{m-1}\frac{1}{|\tau|^{m+j}}.
\]
Here $l_1+l_2+l_3=m+j$, $0\le l_3\le j$.
Together with \eqref{eq_S_0_proof_2} this implies \eqref{eq_estim_symbol_S_0}. The proof is complete.
\end{proof}

Combing Lemma \ref{lem_symbol} with  the fact that $Q\in \Psi^1_{\text{cl}}(M)$ is elliptic and self-adjoint,  we conclude from \cite[Theorem 4.3.1]{Sogge_book} that
 $\tilde S_{z}(Q)$ is a pseudodifferential operator of order $-m$, with the symbol seminorms uniformly bounded in $z\in \C$, $|z|\ge 1$.

Let  $\tilde S_{z}(Q)(x,y)\in \mathcal{D}'(M\times M)$ be the Schwartz kernel of the operator $\tilde S_{z}(Q)$. Then $\tilde S_{z}(Q)(x,y)$ is $C^\infty$ away from the diagonal $\{(x,x):x\in M\}$.
By \cite[Proposition 1, p. 241]{Stein_book}, since $n-m>0$, we have near the diagonal, in local coordinates,
\[
|\tilde S_{z}(Q)(x,y)|\le C|x-y|^{m-n},
\]
uniformly in $z\in \C$, $|z|\ge 1$.
An application of the Hardy-Littlewood-Sobolev inequality gives the estimate \eqref{eq_estim_S_0}.

Let us now prove the estimate \eqref{eq_estim_S_j}. By the Riesz--Thorin interpolation theorem,    \eqref{eq_estim_S_j} follows, if we show that
that there is a constant $C=C(\delta)$ so that for all $z\in \Xi_{\delta}\cap \{z\in \C:|z|\ge 1\}$, we have
\begin{equation}
\label{eq_estim_S_j_L_2}
\|S_{z,j}(Q)f\|_{L^{2}(M)}\le C |z|^{-m} 2^{j}\|f\|_{L^2(M)},\quad  j=0,1,\dots,
\end{equation}
and
\begin{equation}
\label{eq_estim_S_j_L_infty}
\|S_{z,j}(Q)f\|_{L^{\infty}(M)}\le C |z|^{n-m} 2^{-\frac{(n-1)}{2}j}\|f\|_{L^1(M)},\quad  j=0,1,\dots.
\end{equation}
Here the interpolation parameter $\theta=\frac{n-m}{n}$, and
\[
(|z|^{-m} 2^{j})^{\theta}(|z|^{n-m} 2^{-\frac{(n-1)}{2}j})^{1-\theta}=2^{j\frac{2n-m-nm}{2n}}.
\]
When proving  the estimate \eqref{eq_estim_S_j_L_2}, we use the identity $\|e^{itQ}f\|_{L^2(M)}=\|f\|_{L^2(M)}$, $t\in\R$, the fact that $\beta(2^{-j}|z|t)=0$ when $|t|\notin[2^{j-1}/|z|,2^{j+1}/|z|]$, and Minkowski's inequality,  to get
\[
\|S_{z,j}(Q)f\|_{L^{2}(M)}\le \frac{C}{|z|^{m-1}}\int_{|t|\in[2^{j-1}/|z|,2^{j+1}/|z|]}\|e^{itQ}f\|_{L^2(M)}dt\le \frac{C}{|z|^{m}} 2^j\|f\|_{L^2(M)},
\]
uniformly in $z$, which shows \eqref{eq_estim_S_j_L_2}.

Now we are left with proving \eqref{eq_estim_S_j_L_infty}.  Let us denote by $K_{z,j}(x,y)$ the Schwartz kernel of the operator $S_{z,j}(Q)$. The estimate \eqref{eq_estim_S_j_L_infty}  is implied by the estimate
\begin{equation}
\label{eq_kernel_desired}
|K_{z,j}(x,y)|\le C  |z|^{n-m} 2^{-\frac{(n-1)}{2}j},\quad x,y\in M,
\end{equation}
for all $z\in \Xi_{\delta}\cap \{z\in \C:|z|\ge 1\}$, uniformly in $z$.   By \eqref{eq_S_j}, we have
\begin{equation}
\label{eq_kernel_K_z_j}
K_{z,j}(x,y)=\frac{ i}{mz^{m-1}}\sum_{k=0}^{m/2-1} e^{2\pi k i/m} \int_{-\infty}^{+\infty} \beta (2^{-j}|z|t) \rho(t)e^{i|t|\tau_k} e^{it Q}(x,y)d t,
\end{equation}
where $ e^{it Q}(x,y)$ is the  Schwartz kernel of the half-wave operator $ e^{it Q}$. To proceed, we shall make use of
the  H\"ormander--Lax parametrix for the the half-wave operator $ e^{it Q}$, see \cite{Hormander_1968}, \cite[Theorem 4.1.2]{Sogge_book}.

\begin{lem}
\label{lem_Lax_parametrix}
Let  $Q\in \Psi^1_{\emph{\text{cl}}}(M)$ be elliptic and self-adjoint with respect to a positive $C^\infty$ density $d\mu$, and $q(x,\xi)$ be the principal symbol of $Q$. Then there is $\varepsilon>0$ small, depending on $M$ and $Q$, so that if $|t|<\varepsilon$,
\[
e^{itQ}=G(t)+R(t),
\]
where the remainder $R(t)$ has the kernel $R(t,x,y)\in C^\infty([-\varepsilon,\varepsilon]\times M\times M)$, and the kernel $G(t,x,y)$ is supported in a small neighborhood of the diagonal in $M\times M$, for $|t|<\varepsilon$.  Furthermore, suppose that local coordinates are chosen in a patch $\Omega\subset M$ so that $d\mu$ agrees with the Lebesque measure in the corresponding open subset of $\R^n$.  If $\omega\subset \Omega$ is relatively compact, $G(t,x,y)$ has the form,
\[
G(t,x,y)=(2\pi)^{-n}\int_{\R^n} e^{i [\varphi(x,y,\xi)+tq(y,\xi)]}g(t,x,y,\xi)d\xi
\]
when $(t,x,y)\in [-\varepsilon,\varepsilon]\times M\times \omega$. Here $g\in S^{0}_{1,0}$, i.e.
\[
|\p_\xi^\alpha\p_t^{\beta_1}\p_x^{\beta_2}\p_y^{\beta_3}g(t,x,y,\xi)|\le C_{\alpha,\beta_1,\beta_2,\beta_3}(1+|\xi|)^{-|\alpha|},
\]
for all multi-indices  $\alpha$, $\beta_1$, $\beta_2$, $\beta_3$,
and $g$ is supported in a small neighborhood of the diagonal in $\omega\times \omega$, and $\varphi$ is a real function which is homogeneous of degree one in $\xi$, $C^\infty$ for $\xi\ne 0$, and satisfies
\begin{equation}
\label{eq_phase_varphi}
\varphi(x,y,\xi)=\langle x-y,\xi \rangle+ \mathcal{O}_{S^1}(|x-y|^2|\xi|),
\end{equation}
i.e.
\[
|\p_\xi^{\alpha}(\varphi(x,y,\xi)-\langle x-y,\xi \rangle)|\le C_\alpha |x-y|^2|\xi|^{1-|\alpha|},
\]
for all multi-indices $\alpha$.
\end{lem}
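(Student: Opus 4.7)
The plan is to construct $G(t,x,y)$ directly as a Fourier integral operator of the stated form by solving, iteratively in symbol order, the two conditions that $G(0)-I$ is smoothing and $(\p_t-iQ)G$ is smoothing for $|t|<\varepsilon$; once such a $G$ is built, Duhamel's principle shows that $R(t):=e^{itQ}-G(t)$ has a $C^\infty$ kernel on $[-\varepsilon,\varepsilon]\times M\times M$.

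The first key step is the construction of the phase $\varphi$. Inserting the ansatz
\[
G(t,x,y)=(2\pi)^{-n}\int e^{i[\varphi(x,y,\xi)+tq(y,\xi)]}g(t,x,y,\xi)\,d\xi
\]
into $\p_t-iQ$ and expanding the action of $Q$ on $e^{i\varphi}g$ by the usual symbol calculus (so that the leading symbol of $Q_x(e^{i\varphi}g)$ is $q(x,\p_x\varphi)g$), the top-order cancellation, using $\p_t\phi=q(y,\xi)$, requires the time-independent eikonal equation
\[
q(x,\p_x\varphi(x,y,\xi))=q(y,\xi),
\]
with the normalization $\varphi(y,y,\xi)=0$ and $\p_x\varphi(y,y,\xi)=\xi$. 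Since $q$ is positive, smooth, and homogeneous of degree one on $T^*M\setminus 0$, the Hamilton vector field $H_q$ is non-degenerate in $\xi\ne 0$, and the method of characteristics produces a unique smooth solution $\varphi$, homogeneous of degree one in $\xi$, on a neighborhood of the diagonal $\{x=y\}$. Taylor-expanding in $x$ around $y$ and using $\p_x\varphi(y,y,\xi)=\xi$ yields the remainder estimate \eqref{eq_phase_varphi}.

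With $\varphi$ in hand, the subleading order of $(\p_t-iQ)G\sim 0$ produces a linear first-order transport equation along the Hamilton flow for the principal part $g_0$ of $g$, with initial datum $g_0(0,y,y,\xi)=1$ (so that $G(0)$ is the identity to leading order); short-time ODE solvability yields $g_0$. Lower-order terms $g_{-j}$ satisfy inhomogeneous transport equations whose forcing comes from previously built terms, and Borel summation provides $g\in S^0_{1,0}$. Multiplying $g$ by a cutoff in $x-y$ localizes $G$ near the diagonal without altering the parametrix property modulo smoothing. Finally, one checks that $G(0)-I$ is smoothing: because $\varphi(x,y,\xi)=\langle x-y,\xi\rangle+O(|x-y|^2|\xi|)$ and $g(0,y,y,\xi)=1$, the Kuranishi substitution $\eta=\int_0^1(\p_x\varphi)((1-s)y+sx,y,\xi)\,ds$, a diffeomorphism in $\xi$ for $|x-y|$ small by the eikonal normalization, reduces $G(0,x,y)$ to $\delta(x-y)$ modulo a smoothing operator. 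The main obstacle is the global solvability of the eikonal equation, and this is precisely what forces both the restriction $|t|<\varepsilon$ and the localization near the diagonal: for $|t|$ and $|x-y|$ small the Hamilton flow develops no caustics on the support of $g$, but this generically fails at larger times.
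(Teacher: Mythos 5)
The paper does not actually prove Lemma~\ref{lem_Lax_parametrix}: it refers to \cite{Hormander_1968} and \cite[Theorem~4.1.2]{Sogge_book}. So what you have written is a self-contained attempt at a proof. The overall architecture you use---an FIO ansatz with a factored phase $\varphi(x,y,\xi)+tq(y,\xi)$, an eikonal condition from the top-order cancellation of $(\p_t-iQ)G$, a transport hierarchy for the amplitude with Borel summation, a diagonal cutoff, a Kuranishi substitution to see that $G(0)-I$ is smoothing, and Duhamel to convert approximate parametrix into smooth remainder---is the correct Lax-parametrix pattern, and the Duhamel step and the $G(0)$ check are fine.

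The genuine gap is in the eikonal step. You derive the equation $q(x,\p_x\varphi(x,y,\xi))=q(y,\xi)$ together with the conditions $\varphi(y,y,\xi)=0$ and $\p_x\varphi(y,y,\xi)=\xi$, and then assert that ``the method of characteristics produces a unique smooth solution $\varphi$ on a neighborhood of the diagonal.'' But those conditions prescribe the value and full gradient of $\varphi(\cdot,y,\xi)$ at a single point $x=y$, not on a non-characteristic hypersurface. That is not a well-posed Cauchy problem for a first-order scalar PDE: the single bicharacteristic of $q$ through $(y,\xi)$ does not sweep out an open neighborhood of $y$ for fixed $\xi$, so the method of characteristics does not by itself determine $\varphi(x,y,\xi)$ for general $x$ near $y$, and the eikonal equation alone has many extensions compatible with the pointwise data. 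What the standard constructions actually do is one of the following. Either (a) solve the genuinely well-posed \emph{time-dependent} eikonal $\p_t\phi=q(x,\p_x\phi)$, $\phi(0,x,y,\xi)=\langle x-y,\xi\rangle$ (Cauchy data on the non-characteristic hypersurface $\{t=0\}$), obtain the parametrix with phase $\phi(t,x,y,\xi)$, and then pass to the factored phase $\varphi(x,y,\xi)+tq(y,\xi)$ by a $t$-dependent homogeneous fiberwise change of variables in $\xi$---this is where a change-of-coordinates result of the type of Lemma~\ref{lem_change_coordinates} enters; or (b) construct $\varphi$ only as a formal solution to infinite order at $x=y$, by iterated Taylor expansion of the eikonal in powers of $x-y$ and Borel summation, and then verify that the residual $q(y,\xi)-q(x,\p_x\varphi)$, being flat on the diagonal and supported in the region where the cutoff $g$ lives, can be absorbed into the transport hierarchy and the smoothing error. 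As written, you have asserted existence and uniqueness of $\varphi$ where it needs to be argued; your sketch should be upgraded along one of these two lines. A secondary remark: the restriction $|t|<\varepsilon$ is needed not only to avoid caustics of $\exp(tH_q)$ but, more directly, because the graph of $\exp(tH_q)$ must stay close enough to the conormal of the diagonal to be parametrized by a phase of the specific factored form in the lemma, and this fails already at moderate $t$ on a compact manifold even before conjugate points appear.
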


In what follows, we shall make the choice of $\varepsilon$  in the definition \eqref{eq_def_rho_eps} of the function $\rho(t)$ so that  Lemma \ref{lem_Lax_parametrix} is applicable.

We assume that $2^{-j}|z|> 1$, as otherwise $S_{z,j}=0$, cf. \eqref{eq_S_j=0}.  Let us write
\[
K_{z,j}(x,y)=K_{z,j}^{(1)}(x,y)+ K_{z,j}^{(2)}(x,y),
\]
where
\begin{align*}
K_{z,j}^{(1)}(x,y)=
\frac{ i}{mz^{m-1}}\sum_{k=0}^{m/2-1} e^{2\pi k i/m} \int_{-\infty}^{+\infty} \beta (2^{-j}|z|t) \rho(t)e^{i|t|\tau_k} G(t,x,y)d t,\\
K_{z,j}^{(2)}(x,y)=
\frac{ i}{mz^{m-1}}\sum_{k=0}^{m/2-1} e^{2\pi k i/m} \int_{-\infty}^{+\infty} \beta (2^{-j}|z|t) \rho(t)e^{i|t|\tau_k} R(t,x,y)d t.
\end{align*}

Since $R(t,x,y)\in C^\infty([-\varepsilon,\varepsilon]\times M\times M)$, we have
\begin{equation}
\label{eq_kernel_smooth}
|K_{z,j}^{(2)}(x,y)|\le \frac{C}{|z|^{m-1}}\bigg|\int_{|t|\in [2^{j-1}/|z|,2^{j+1}/|z|]}dt\bigg|\le \frac{2^jC}{|z|^m}.
\end{equation}
As $2^{-j}|z|> 1$, the estimate \eqref{eq_kernel_smooth} is better than the desired bound \eqref{eq_kernel_desired} for $K_{z,j}$.

Let us now estimate $K_{z,j}^{(1)}$. Setting
\[
r=\frac{2^j}{|z|},\quad \frac{1}{|z|}\le r< 1,
\]
and assuming that the local coordinates are chosen as in Lemma \ref{lem_Lax_parametrix},
we write
\begin{equation}
\label{eq_K_z,j_phase}
\begin{aligned}
K_{z,j}^{(1)}(x,y)=&
\frac{ i}{mz^{m-1}}\sum_{k=0}^{m/2-1} e^{2\pi k i/m} \frac{1}{(2\pi)^n}\\
&\int_{\R^n}\int_{-\infty}^{+\infty} \beta (t/r) \rho(t)e^{i|t|\tau_k}
e^{i [\varphi(x,y,\xi)+tq(y,\xi)]}g(t,x,y,\xi)d t d\xi,
\end{aligned}
\end{equation}
for $(x,y)\in M\times \omega$.
We would like to replace $\varphi$ by the Euclidean phase function $\varphi_0=\langle x-y,\xi\rangle$.  In doing so, we shall follow \cite{Seeger_Sogge_1989} and notice that both $\varphi$ and $\varphi_0$ parametrize the trivial Lagrangian manifold $\{(x,\xi,x,\xi)\}$. This is due to the fact that when $(x,y)$ is in a neighborhood of the diagonal, we have
$\varphi'_\xi=0$ precisely when $x=y$, and then $\varphi'_x=-\varphi'_y=\xi$.  Following \cite{Seeger_Sogge_1989}, we can use the following result of \cite[Theorem 3.1.6]{Hormander_1971}.

\begin{lem}
\label{lem_change_coordinates}
Suppose that $\varphi$ is as in Lemma \ref{lem_Lax_parametrix}, i.e. $\varphi$ satisfies \eqref{eq_phase_varphi}. Then, for $(x,y)$ close to the diagonal, there is a $C^\infty$ for $\xi\ne 0$ homogeneous of degree one change of coordinates
\[
\eta=\kappa_{x,y}(\xi)
\]
so that
\[
\varphi(x,y,\kappa_{x,y}^{-1}(\eta))=\langle x-y,\eta\rangle.
\]
The transformation $\kappa_{x,y}$ depends smoothly on the parameters $x$, $y$, and in addition,
\begin{equation}
\label{eq_transform_identity}
\kappa_{x,y}=\emph{\text{Identity}}, \quad\text{when}\quad x=y.
\end{equation}
\end{lem}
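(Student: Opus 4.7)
My plan is to construct $\kappa_{x,y}$ explicitly via a Taylor expansion of $\varphi$ in the $x$-variable around $y$, and then check the required properties directly. The hypothesis $\varphi(x,y,\xi) = \langle x-y,\xi\rangle + \mathcal{O}_{S^1}(|x-y|^2|\xi|)$ implies in particular $\varphi(y,y,\xi) = 0$, so by Taylor's theorem with integral remainder applied along the segment $s \mapsto y + s(x-y)$,
\[
\varphi(x,y,\xi) = \int_0^1 \frac{d}{ds}\varphi(y+s(x-y),y,\xi)\,ds = \Big\langle x-y, \int_0^1 (\partial_x\varphi)(y+s(x-y),y,\xi)\,ds \Big\rangle.
\]
I would then \emph{define}
\[
\kappa_{x,y}(\xi) := \int_0^1 (\partial_x\varphi)(y+s(x-y),y,\xi)\,ds,
\]
so that by construction $\varphi(x,y,\xi) = \langle x-y,\kappa_{x,y}(\xi)\rangle$. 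Setting $\eta = \kappa_{x,y}(\xi)$ then gives the desired normalization $\varphi(x,y,\kappa_{x,y}^{-1}(\eta)) = \langle x-y,\eta\rangle$.

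Next I would verify the qualitative properties. Since $\varphi$ is $C^\infty$ for $\xi\ne 0$ and homogeneous of degree one in $\xi$, so is $\partial_x\varphi$, and therefore so is $\kappa_{x,y}(\xi)$; smooth dependence on the parameters $(x,y)$ is immediate from the integral formula. Differentiating the asymptotic expansion of $\varphi$ with respect to $x$ gives $(\partial_x\varphi)(x,y,\xi) = \xi + \mathcal{O}_{S^1}(|x-y||\xi|)$, which in particular yields $\kappa_{y,y}(\xi) = (\partial_x\varphi)(y,y,\xi) = \xi$, i.e.\ the identity \eqref{eq_transform_identity}.

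The one nontrivial step is to show that $\kappa_{x,y}$ is a diffeomorphism of $\R^n\setminus\{0\}$ for $(x,y)$ sufficiently close to the diagonal; this is where I expect the only real work. Differentiating the integral formula in $\xi$ yields
\[
d_\xi \kappa_{x,y}(\xi) = \int_0^1 (\partial_x\partial_\xi\varphi)(y+s(x-y),y,\xi)\,ds = I + \mathcal{O}(|x-y|),
\]
where the estimate follows from the $S^1$-bounds in \eqref{eq_phase_varphi}. Thus $d_\xi\kappa_{x,y}$ is invertible uniformly for $(x,y)$ in a sufficiently small neighborhood $\mathcal{U}$ of the diagonal, and by the inverse function theorem combined with homogeneity, $\kappa_{x,y}:\R^n\setminus\{0\}\to \R^n\setminus\{0\}$ is a $C^\infty$ diffeomorphism, depending smoothly on $(x,y)\in\mathcal{U}$. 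The inverse $\kappa_{x,y}^{-1}$ then inherits the same regularity, homogeneity, and smooth dependence on parameters, completing the proof.
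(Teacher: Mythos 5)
The paper itself gives no proof of this lemma, deferring instead to \cite[Theorem 3.1.6]{Hormander_1971}, which is H\"ormander's general theorem on equivalence of non-degenerate phase functions parametrizing the same conic Lagrangian. Your proposal replaces that abstract citation with a direct, explicit construction, and the construction is correct: since $\varphi(y,y,\xi)=0$ by \eqref{eq_phase_varphi}, the fundamental theorem of calculus along $s\mapsto y+s(x-y)$ indeed gives $\varphi(x,y,\xi)=\langle x-y,\kappa_{x,y}(\xi)\rangle$ with $\kappa_{x,y}(\xi)=\int_0^1(\partial_x\varphi)(y+s(x-y),y,\xi)\,ds$, and homogeneity of degree one in $\xi$ as well as smooth dependence on $(x,y)$ are immediate from this formula. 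This is arguably preferable here to invoking the general equivalence-of-phase-functions machinery, because in the present setting (same number $n$ of frequency variables, no excess) the abstract theorem collapses to exactly this fiber-preserving change of coordinates, and the integral formula exhibits it concretely.

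Two small points deserve to be made more precise. First, you write that $(\partial_x\varphi)(x,y,\xi)=\xi+\mathcal{O}_{S^1}(|x-y|\,|\xi|)$ and that $d_\xi\kappa_{x,y}=I+\mathcal{O}(|x-y|)$ ``follows from the $S^1$-bounds in \eqref{eq_phase_varphi}.'' Strictly speaking, \eqref{eq_phase_varphi} only controls $\partial_\xi^\alpha$ of the remainder $r=\varphi-\langle x-y,\xi\rangle$; to get bounds on $\partial_x r$ and $\partial_x\partial_\xi r$ you also need the joint $C^\infty$ smoothness of $\varphi$ (which the lemma's hypotheses do grant), together with a Taylor argument: a smooth function that is $\mathcal{O}(|x-y|^2)$ uniformly on a compact $\xi$-set automatically has $x$-gradient that is $\mathcal{O}(|x-y|)$. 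Second, the passage from ``$d_\xi\kappa_{x,y}$ is uniformly close to $I$'' to ``$\kappa_{x,y}$ is a global diffeomorphism of $\R^n\setminus\{0\}$'' needs one extra sentence beyond the local inverse function theorem: either observe that for $|x-y|$ small, $\kappa_{x,y}=I+g_{x,y}$ with $\|g_{x,y}\|_{C^1}$ small on the unit sphere (and extend by homogeneity), so injectivity and surjectivity follow from a contraction-mapping argument, or invoke a global inverse function theorem of Hadamard type. With these two clarifications the proposal is a complete and correct proof.
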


Lemma \ref{lem_change_coordinates} implies that \eqref{eq_K_z,j_phase} can be rewritten as
 \begin{equation}
\label{eq_K_z,j_phase_2}
\begin{aligned}
K_{z,j}^{(1)}(x,y)=&
\frac{ i}{mz^{m-1}}\sum_{k=0}^{m/2-1} e^{2\pi k i/m} \frac{1}{(2\pi)^n}\\
&\int_{\R^n}\int_{-\infty}^{+\infty} \beta (t/r) \rho(t)e^{i|t|\tau_k}
e^{i [  \langle x-y,\eta\rangle +t\tilde q(x,y,\eta)]}\tilde g(t,x,y,\eta)d t d\eta,
\end{aligned}
\end{equation}
where
\[
\tilde g(t,x,y,\eta)= g(t,x,y,\kappa_{x,y}^{-1}(\eta))\bigg|\frac{D(\kappa_{x,y}^{-1})(\eta)}{D\eta}\bigg|,
\]
with $\frac{D(\kappa_{x,y}^{-1})(\eta)}{D\eta}$ being the Jacobian of the transformation $\kappa_{x,y}^{-1}$, has the same properties as $g$, in particular $\tilde g\in S^0_{1,0}$.
Also,
\[
\tilde q(x,y,\eta)=q(y,\kappa_{x,y}^{-1}(\eta))
\]
depends smoothly on $x$, $y$. Furthermore, since strict convexity is preserved under diffeomorphisms that are sufficiently close to the identity in the $C^\infty$ sense, the surface
\[
\tilde \Sigma_{x,y}=\{\eta\in \R^n: \tilde q(x,y,\eta)=1\}
\]
is  strictly convex.

Making the change of variables $t\mapsto t/r$ in \eqref{eq_K_z,j_phase_2}, we get
 \begin{equation}
\label{eq_K_z,j_phase_3}
\begin{aligned}
K_{z,j}^{(1)}(x,y)=&
\frac{ i r}{mz^{m-1}}\sum_{k=0}^{m/2-1} e^{2\pi k i/m} \frac{1}{(2\pi)^n}\\
&\int_{\R^n}\int_{-\infty}^{+\infty} \beta (t) \rho(rt)e^{i r|t|\tau_k}
e^{i  \langle x-y,\eta\rangle} e^{itr\tilde q(x,y,\eta)}\tilde g(rt,x,y,\eta)d t d\eta.
\end{aligned}
\end{equation}
As $q$ and $\kappa_{x,y}$ are homogeneous of degree one, we have
\[
r\tilde q(x,y,\eta)=q(x,y,r\kappa_{x,y}^{-1}(\eta))=\tilde q(x, y,r\eta).
\]
Making further  change of variables $\eta\mapsto r\eta$ in \eqref{eq_K_z,j_phase_3}, we obtain that
 \begin{equation}
\label{eq_K_z,j_phase_4}
\begin{aligned}
K_{z,j}^{(1)}(x,y)=&
\frac{ i r^{1-n}}{mz^{m-1}}\sum_{k=0}^{m/2-1} e^{2\pi k i/m} \frac{1}{(2\pi)^n}\\
&\int_{\R^n}\int_{-\infty}^{+\infty} \beta (t) \rho(rt)e^{i r|t|\tau_k}
e^{i  \langle \frac{x-y}{r},\eta\rangle} e^{it\tilde q(x, y,\eta)}\tilde g(rt,x,y,\eta/r)d t d\eta.
\end{aligned}
\end{equation}

As $\tilde q(x, y,\eta)$ is not smooth at $\eta=0$, it will be convenient to write
\begin{align*}
J_1(x,y,t,r)&=\int_{\R^n} e^{i [\langle \frac{x-y}{r},\eta\rangle + t\tilde q(x,y,\eta)]}\chi(\eta)\tilde g(rt,x,y,\eta/r) d\eta,\\
J_2(x,y,t,r)&=\int_{\R^n} e^{i [\langle \frac{x-y}{r},\eta\rangle + t\tilde q(x,y,\eta)]}(1-\chi(\eta))\tilde g(rt,x,y,\eta/r) d\eta,
\end{align*}
where   $\chi\in C^\infty_0(\R^n)$ and $\chi=1$ when $|\eta|\le 1$.  Here $|t|\in [1/2,2]$ and $0< r\le 1$.

As $\tilde g\in S^0_{1,0}$, we see that
\begin{equation}
\label{eq_J_1_est}
|J_1(x,y,t,r)|\le C,
\end{equation}
for all $x,y\in\omega$, $|x-y|$ small enough, uniformly in $r$.

Let us now estimate the absolute value of the oscillatory integral $J_2(x,y,t,r)$ when $|t|\in[1/2,2]$. To that end, consider
\[
\nabla_\eta [\langle \frac{x-y}{r},\eta\rangle + t\tilde q(x,y,\eta)], \quad |t|\in[1/2,2].
\]
As $\tilde q(x,y,\eta)$ is homogeneous of degree one in $\eta$, by the Euler homogeneity relation, we have
\[
\eta\cdot\nabla_\eta \tilde q(x,y,\eta)=\tilde q(x,y,\eta).
\]
This and the ellipticity of $\tilde q$ imply that $\nabla_\eta \tilde q(x,y,\eta)\ne 0$ for all $\eta\in \R^n\setminus\{0\}$. Thus, there is a constant $A>1/2$ such that $|\nabla_\eta \tilde q(x,y,\eta)|\ge A^{-1}$ for all $\eta\in\mathbb{S}^{n-1}$, and therefore, by the fact that $\nabla_\eta \tilde q$ is homogeneous of degree zero, we conclude that
\[
|\nabla_\eta \tilde q(x,y,\eta)|\ge A^{-1} \quad \text{for all}\quad  \eta\in \R^n\setminus\{0\}.
\]
On the other hand, since $\nabla_\eta \tilde q\in S^0_{1,0}$, for $|\eta|\ge 1$, we have
\[
|\nabla_\eta \tilde q(x,y,\eta)|\le A.
\]
Hence, for $|t|\in [1/2,2]$, if $x$, $y$ are such that
\begin{equation}
\label{eq_regim_1}
\frac{|x-y|}{r}\notin [ A^{-1}/4, 4 A],
\end{equation}
then
\begin{equation}
\label{eq_regim_1_grad}
|\nabla_\eta [\langle \frac{x-y}{r},\eta\rangle + t\tilde q(x,y,\eta)]|\ge A^{-1}/2.
\end{equation}

Assume first that \eqref{eq_regim_1} holds. Then we shall integrate by parts in the oscillatory integral $J_2$, see \cite[Lemma 1.2.1]{Hormander_1971}.  To that end,
setting
\[
\psi(t,x,y,\eta)=\langle \frac{x-y}{r},\eta\rangle + t\tilde q(x,y,\eta),
\]
we consider the operator
\[
L=\sum_{j=1}^n a_j \p_{\eta_j},\quad a_j=\frac{\p_{\eta_j}\psi}{i|\nabla_\eta\psi|^2}.
\]
We have $L^N(e^{i\psi(\eta)})=e^{i\psi(\eta)}$ for any $N\in \N$, and the transpose $L'$ of $L$ is given by
\begin{equation}
\label{eq_L_transpose}
L'=-\sum_{j=1}^n a_j \p_{\eta_j}-\div a, \quad a=(a_1,\dots,a_n).
\end{equation}
Hence, we get
\[
J_2(x,y,t,r)=\int_{\R^n} e^{i \psi(\eta)}(L')^N((1-\chi(\eta))\tilde g(rt,x,y,\eta/r)) d\eta.
\]

We observe that
\begin{equation}
\label{eq_symbol_g}
(1-\chi(\eta))\tilde g(rt,x,y,\eta/r)\in S^0_{1,0}
\end{equation}
uniformly in $0<r\le 1$. This follows from the facts that when $|\eta|\ge1$,
\[
|\p_\eta^\alpha\p_t^{\beta_1}\p_x^{\beta_2}\p_y^{\beta_3} \tilde g(rt,x,y,\eta/r)|\le \frac{r^{\beta_1}}{r^{|\alpha|}} C_{\alpha,\beta_1,\beta_2,\beta_3}(1+|\eta|/r)^{-|\alpha|}\le C_{\alpha,\beta_1,\beta_2,\beta_3}(1+|\eta|)^{-|\alpha|},
\]
for all  $\beta_1\in\N_0:=\N\cup\{0\}$ and all  $\alpha, \beta_2,\beta_3\in \N_0^n$, and
\[
|\p_\eta^\alpha \chi(\eta)|\le C_{\alpha,N}(1+|\eta|)^{-N},
\]
for all  $\alpha\in \N^n_0$ and all $N>0$.

Let us now show that
\begin{equation}
\label{eq_symbol_a_j}
a_j(\eta)\in S^0_{1,0},  \quad |\eta|\ge 1,
\end{equation}
uniformly in $r$, $x$, $y$ and $t$ satisfying \eqref{eq_regim_1}. Indeed, first using \eqref{eq_regim_1_grad}, we have
\begin{equation}
\label{eq_eta_der_4}
|a_j(\eta)|=\frac{|\p_{\eta_j}\psi|}{|\nabla_\eta\psi|^2}\le 2A.
\end{equation}
 Let  $\alpha\in \N^n$ be such that $|\alpha|\ge 1$. Then by Leibniz formula, we get
\begin{equation}
\label{eq_eta_der_1}
\p_\eta^\alpha a_j(\eta)=\sum_{\beta+\gamma=\alpha}c_{\beta,\gamma} \p_\eta^\beta(\p_{\eta_j}\psi)\p_\eta^\gamma\bigg(\frac{1}{|\nabla_\eta\psi|^2}\bigg),
\end{equation}
with constants $c_{\beta,\gamma}$.  Here
\[
\p_{\eta_j}\psi=\frac{x_j-y_j}{r}+t\p_{\eta_j}\tilde q(x, y,\eta),
\]
and hence, for $|\beta|\ge 1$, we have
\begin{equation}
\label{eq_eta_der}
|\p_\eta^\beta (\p_{\eta_j}\psi)|\le C_\beta (1+|\eta|)^{-|\beta|},
\end{equation}
uniformly in $r$. To estimate the absolute value of $\p_\eta^\gamma(1/|\nabla_\eta\psi|^2)$ for $|\gamma|\ge 1$, we shall use the Fa\`a di Bruno formula, see \cite[p. 94]{Zworski_book},
\begin{equation}
\label{eq_Faa_di_Bruno}
\p_\eta^\gamma\bigg(\frac{1}{b}\bigg)=\frac{1}{b}\sum_{\underset{|\gamma^j|\ge 1}{\underset{|\gamma|=|\gamma^1|+\dots+|\gamma^k|}{1 \le k\le |\gamma|}}}
C_{\gamma^1,\dots,\gamma^k}\prod_{j=1}^k\frac{\p_\eta^{\gamma^j} b}{b}.
\end{equation}
For $|\gamma^j|\ge 1$, using Leibniz formula and \eqref{eq_eta_der},
we have
\[
|\p_\eta^{\gamma^j} (|\nabla_\eta\psi|^2)|
\le C_{\gamma^{j}}|\nabla_\eta\psi|(1+|\eta|)^{-|\gamma^j|}.
\]
Therefore, \eqref{eq_Faa_di_Bruno} implies that for $\gamma\in \N_0^n$,
\begin{equation}
\label{eq_eta_der_2}
\bigg|\p_\eta^\gamma\bigg(\frac{1}{|\nabla_\eta\psi|^2}\bigg)\bigg|\le C_\gamma\frac{1}{|\nabla_\eta\psi|^{2}} (1+|\eta|)^{-|\gamma|}
\end{equation}
uniformly in $r$.
We conclude from \eqref{eq_eta_der_1} with the help of \eqref{eq_eta_der} and \eqref{eq_eta_der_2} that for all $a\in \N^n$, $|\alpha|\ge 1$,
\begin{equation}
\label{eq_eta_der_5}
|\p_\eta^\alpha a_j(\eta)|\le C_\alpha (1+|\eta|)^{-|\alpha|},
\end{equation}
uniformly in $r$. Hence, \eqref{eq_symbol_a_j} follows from  \eqref{eq_eta_der_4} and \eqref{eq_eta_der_5}.

Using \eqref{eq_eta_der_5}, we obtain that
\begin{equation}
\label{eq_symbol_div_a}
\div a\in S^{-1}_{1,0}, \quad |\eta|\ge 1,
\end{equation}
 uniformly in $r$, $x$, $y$ and $t$ satisfying \eqref{eq_regim_1}.
Thus, it follows from \eqref{eq_L_transpose} with the help of \eqref{eq_symbol_a_j}, \eqref{eq_symbol_div_a} and \eqref{eq_symbol_g} that
\[
(L')^N((1-\chi(\eta))\tilde g(rt,x,y,\eta/r))\in S^{-N}_{1,0}
\]
uniformly in $r$,  $x$, $y$ and $t$ satisfying \eqref{eq_regim_1}.

Hence,  choosing $N$ sufficiently large, we conclude that
\begin{equation}
\label{eq_J_2_first_case}
|J_2(x,y,t,r)|\le C.
\end{equation}
Therefore, it follows from  \eqref{eq_K_z,j_phase_4}, \eqref{eq_J_1_est} and \eqref{eq_J_2_first_case} that
\begin{equation}
\label{eq_K_1_first_case}
|K^{(1)}_{z,j}(x,y)|\le C\frac{r^{1-n}}{|z|^{m-1}}=2^{j(1-n)}|z|^{n-m},
\end{equation}
when $x,y$ are such that $\frac{|x-y|}{r}\notin [ A^{-1}/4, 4 A]$. The estimate \eqref{eq_K_1_first_case} is better than the desired estimate \eqref{eq_kernel_desired}.

Assume now that $\frac{|x-y|}{r}\in [ A^{-1}/4, 4 A]$ and let us estimate the absolute value of $K^{(1)}_{z,j}(x,y)$ in this case. As above, we only need to estimate the absolute value of
\begin{align*}
K_{z,j}^{(1,2)}(x,y)=&
\frac{ i r^{1-n}}{mz^{m-1}}\sum_{k=0}^{m/2-1} e^{2\pi k i/m} \frac{1}{(2\pi)^n}\int_{\R^n}\int_{-\infty}^{+\infty} \beta (t) \rho(rt)e^{i r|t|\tau_k} \\
&e^{i  \langle \frac{x-y}{r},\eta\rangle} e^{it\tilde q(x,y,\eta)}(1-\chi(\eta))\tilde g(rt,x,y,\eta/r)d t d\eta,
\end{align*}
where $\chi\in C^\infty_0(\R^n)$ is such that $\chi=1$ when $|\eta|\le 1$.
Using  \eqref{eq_residue}, we get
 \begin{equation}
\label{eq_K_z,j_phase_10}
\begin{aligned}
K_{z,j}^{(1,2)}(x,y)=&
\frac{ r^{1-n}}{(2\pi)^{n+1}}\int_{-\infty}^{+\infty}\int_{\R^n}\int_{-\infty}^{+\infty} \frac{e^{it(-r\tau+\tilde q(x,y,\eta))}}{\tau^m-z^m} d\tau \\
&\beta (t) \rho(rt)
e^{i  \langle \frac{x-y}{r},\eta\rangle} (1-\chi(\eta))\tilde g(rt,x,y,\eta/r)d\eta dt.
\end{aligned}
\end{equation}
Making the change of variables $\tau\mapsto -r\tau +\tilde q(x, y,\eta)$, we obtain that
\begin{equation}
\label{eq_K_z,j_phase_11}
\begin{aligned}
K_{z,j}^{(1,2)}(x,y)=
\frac{ r^{-n}}{(2\pi)^{n}}\int_{-\infty}^{+\infty}\int_{\R^n} \frac{h_r(\tau,x,y,\eta) e^{i  \langle \frac{x-y}{r},\eta\rangle} }{(\frac{\tilde q(x, y,\eta)-\tau}{r})^m-z^m} d\eta d\tau,
\end{aligned}
\end{equation}
where
\begin{equation}
\label{eq_form_h_r_100}
h_r(\tau,x,y,\eta)=\frac{1}{2\pi}\int_{-\infty}^{+\infty} e^{it \tau} \beta(t)\rho(rt) (1-\chi(\eta)) \tilde g(rt,x,y,\eta/r)dt
\end{equation}
is the inverse Fourier transform of the compactly supported smooth function $t\mapsto \beta(t)\rho(rt) (1-\chi(\eta)) \tilde g(rt,x,y,\eta/r)$.

We have
\begin{equation}
\label{eq_form_h_r}
|\p_\eta^\gamma h_r(\tau,x,y,\eta)  |\le C_{N,\gamma} (1+|\tau|)^{-N}(1+ |\eta|)^{-|\gamma|},
\end{equation}
uniformly in $r$, for all $N>0$ and $\gamma\in \N^n_0$. This can be seen by using \eqref{eq_symbol_g} in the case $|\tau|\le 1$, and by
integrating by parts $N$ times in \eqref{eq_form_h_r_100} and using \eqref{eq_symbol_g} in the case $|\tau|\ge 1$.

We write
\[
\bigg(\frac{\tilde q(x,y,\eta)-\tau}{r}\bigg)^m-z^m=\prod_{k=0}^{m-1}\bigg(\frac{\tilde q(x,y,\eta)- \tau}{r}-ze^{2\pi ki/m}\bigg),
\]
and using a partial fraction decomposition, we get
\[
\frac{1}{(\frac{\tilde q(x,y,\eta)-\tau}{r})^m-z^m}=\frac{r}{z^{m-1}}\sum_{k=0}^{m-1} \frac{A_k}{\tilde q(x,y,\eta)- \tau-rze^{2\pi ki/m}},
\]
where
\[
A_k=\bigg(\prod_{\underset{l\ne k}{l=0}}^{m-1}(e^{2\pi k i/m}-e^{2\pi l i/m})\bigg)^{-1}.
\]
Thus, it follows from \eqref{eq_K_z,j_phase_11} that
\begin{equation}
\label{eq_K_z,j_phase_12}
\begin{aligned}
K_{z,j}^{(1,2)}(x,y)=
\frac{ r^{1-n}}{(2\pi)^{n}z^{m-1}}\sum_{k=0}^{m-1} A_k  \int_{-\infty}^{+\infty}\int_{\R^n} \frac{h_r(\tau,x,y,\eta) e^{i  \langle \frac{x-y}{r},\eta\rangle} }{\tilde q(x,y,\eta)-(\tau +rze^{2\pi k i/m})} d\eta d\tau.
\end{aligned}
\end{equation}

Recalling that $\arg(z)\in (0,2\pi/m)$, we see that $\tau +rze^{2\pi k i/m}$ avoids the real axis, for $k=0,\dots, m-1$.
To proceed further, we shall need the following result, similar to \cite[Proposition 2.4]{Bourgain_Shao_Sogge_Yao}.

\begin{lem}
\label{lem_estimate_bourgain_sogge}
Let $n\ge 2$ and let  $h\in C^\infty(\R^n\setminus\{0\})$ satisfy  the Mihlin-type condition,
\begin{equation}
\label{eq_nom_h}
|\p_\xi^\alpha h(\xi)|\le H_{\alpha} |\xi|^{-|\alpha|}, \quad \xi\ne 0,\quad \alpha\in \N^n_0.
\end{equation}
Let $a \in C^\infty(\R^n\setminus\{0\})$ be homogeneous of degree one. Assume that $a(\xi)>0$ for all $\xi\in\R^n\setminus\{0\}$ and that  the cosphere $\Sigma=\{\xi\in \R^n:a(\xi)=1\}$ is strictly convex.
Then there is a constant $C>0$ such that for all $x\in \R^n$, $x\ne 0$, and all $w\in \C\setminus  [0,\infty)$, we have
\begin{equation}
\label{eq_lem_estimate_bourgain_sogge}
\bigg|\int_{\R^n}\frac{h(\xi) e^{i\langle x,\xi\rangle}}{a(\xi)-w}d\xi\bigg|\le C(|x|^{1-n}+(|w|/|x|)^{\frac{n-1}{2}}).
\end{equation}
\end{lem}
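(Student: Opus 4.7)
The strategy is to pass to polar coordinates adapted to $a$, apply the method of stationary phase on the angular integration, and then handle the remaining radial integral by a contour-shift/residue argument.

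First, I would introduce coordinates $\xi = s\omega$ with $s = a(\xi) > 0$ and $\omega\in\Sigma$. Since $\Sigma$ is a smooth compact hypersurface star-shaped about the origin and $a$ is homogeneous of degree one, the Lebesgue measure factors as $d\xi = s^{n-1}\,ds\,d\mu(\omega)$ for some smooth positive density $d\mu$ on $\Sigma$, and $a(\xi)=s$. The integral becomes
\[
I(x,w) = \int_0^\infty \frac{s^{n-1}}{s-w}\,F(s,x)\,ds,\qquad F(s,x)=\int_\Sigma h(s\omega)\,e^{is\langle x,\omega\rangle}\,d\mu(\omega).
\]

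Second, I would apply stationary phase to $F$. Strict convexity of $\Sigma$ gives non-vanishing Gaussian curvature, so the phase $\omega\mapsto\langle x,\omega\rangle$ on $\Sigma$ has exactly two non-degenerate critical points, where the outer normal is parallel to $\pm x$. The Mihlin condition \eqref{eq_nom_h} propagates to uniform bounds on $h(s\omega)$ and its $\omega$-derivatives. Standard stationary phase then yields the basic estimate $|F(s,x)|\le C(1+s|x|)^{-(n-1)/2}$ and, for $s|x|\ge 1$, the refined asymptotic
\[
F(s,x) = (s|x|)^{-(n-1)/2}\sum_{\pm}A_\pm(s,\theta)\,e^{\pm is\rho(\theta)} + E(s,x),\qquad \theta=x/|x|,
\]
where $\rho(\theta)>0$ is the support function of $\Sigma$ (smooth, positive, homogeneous of degree one on $\R^n\setminus\{0\}$ by strict convexity), the amplitudes $A_\pm$ are uniformly bounded with good symbolic behaviour in $s$, and $|E(s,x)|\le C(s|x|)^{-(n+1)/2}$.

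Third, I would split the radial integral by a smooth cutoff at $s\sim 1/|x|$. On $s\lesssim 1/|x|$, the trivial bound $|F|\le C$ together with a case analysis on the position of $w$ relative to $[0,s]$ to control $|s-w|$ yields a contribution of order $|x|^{1-n}$. On $s\gtrsim 1/|x|$, substitute the asymptotic expansion to reduce matters to one-dimensional oscillatory integrals of the form
\[
|x|^{-(n-1)/2}\int\chi(s)\,\frac{s^{(n-1)/2}}{s-w}\,A_\pm(s,\theta)\,e^{\pm is\rho(\theta)}\,ds,
\]
plus the subdominant contribution from $E$. I would deform the contour of integration into the upper (resp.\ lower) half $s$-plane for the $+$ (resp.\ $-$) sign, using $\mathrm{Im}(\pm is\rho(\theta))<0$ on the shifted contour. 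Because $w\notin[0,\infty)$, the pole $s=w$ lies in exactly one of the two half-planes for each sign; the residue has modulus $\lesssim|w|^{(n-1)/2}$, which combined with the $|x|^{-(n-1)/2}$ prefactor yields the $(|w|/|x|)^{(n-1)/2}$ piece. The shifted-contour integral is controlled by $|e^{\mp s\rho(\theta)}|$ decay along the imaginary ray and contributes $\lesssim |x|^{1-n}$.

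The main obstacle is that the stationary phase amplitudes $A_\pm$ are merely $C^\infty$, not holomorphic in $s$, so a naive contour deformation is unavailable. The standard remedy, as in the analogous Proposition~2.4 of \cite{Bourgain_Shao_Sogge_Yao}, is either to use an almost-analytic extension of $A_\pm$ into a thin complex neighbourhood of $\R$ (with $\bar\partial$-errors that vanish to infinite order on the real axis), or to replace the contour shift by the Laplace representation
\[
\frac{1}{s-w}=\mp i\int_0^\infty e^{\mp it(s-w)}\,dt,\qquad \pm\,\mathrm{Im}\,w>0,
\]
which converts the resonant radial integral into a two-variable oscillatory integral amenable to repeated integration by parts and stationary phase in the auxiliary variable without any appeal to analyticity. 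Either route produces the same bound; assembling the pieces yields the stated estimate $|x|^{1-n}+(|w|/|x|)^{(n-1)/2}$.
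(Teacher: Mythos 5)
Your plan matches the paper's proof in all essentials: the paper also passes to level sets of $a$ (via the coarea formula rather than the substitution $\xi=s\omega$, but these are the same), invokes the stationary phase lemma (Lemma~\ref{lem_stationary_phase}) for the angular integral, and then handles the radial integral by precisely your ``option (b)'': it rewrites $1/(E-w_1-iw_2)$ through the identity
$\frac{1}{2\pi}\int_{-\infty}^{+\infty}\frac{e^{-i\tau t}}{\alpha-i\tau}\,d\tau=\mathrm{sgn}(\alpha)\,H(\alpha t)e^{-\alpha t}$,
which is the Fourier/Laplace representation you describe, so no almost-analytic extension or contour shift is needed. The paper also first reduces to $|w|=1$ by scaling and observes that for $w$ away from a conic neighbourhood of $[0,\infty)$ the full integrand is already a Mihlin symbol of order $-1$, which gives the $|x|^{1-n}$ piece immediately; you skip this reduction, but it is purely organizational.

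Two steps you describe as easy do require care. First, on the piece $s\lesssim 1/|x|$ you cannot conclude from $|F|\le C$ alone: if $w$ lies near $[0,C/|x|]$ with small imaginary part, $\int_0^{C/|x|}\frac{s^{n-1}}{|s-w|}\,ds$ diverges logarithmically, so you must separate real and imaginary parts of $1/(s-w)$ and exploit the odd cancellation in the real part, exactly as the paper does via $f(E)=J(E+w_1,x)-J(-E+w_1,x)$ vanishing at $E=0$. Second, because you have not inserted a compactly supported cutoff in the radial variable (the paper localises to $a(\xi)\in[1/4,4]$ after normalising $|w|=1$), the integrand on the $s\gtrsim 1/|x|$ piece is of size $s^{(n-3)/2}$ at infinity and is not absolutely integrable for $n\ge 5$; the oscillatory factor $e^{\pm is\rho|x|}$ must be used to control the tail. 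Both points are handled by the same tools already in your plan, so the route is sound, but they are not trivial estimates.
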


\begin{proof}
First notice that since $a \in C^\infty(\R^n\setminus\{0\})$ is homogeneous of degree one, we have
\[
|\p_\xi^\alpha a(\xi)|\le A_{\alpha} |\xi|^{1-|\alpha|}, \quad\quad \xi\ne 0,\quad \alpha\in \N^n_0.
\]

Let $b\in C^\infty(\R^n\setminus\{0\})$ be such that
\[
|\p_\xi^\alpha b(\xi)|\le B_{\alpha} |\xi|^{-1-|\alpha|}, \quad\xi\ne 0,\quad  \alpha\in \N^n_0.
\]
Then it follows from
\cite[p. 245]{Stein_book} that the  Fourier transform of $b(\xi)$ satisfies
\begin{equation}
\label{eq_lem_a_0}
\bigg| \int_{\R^n}  b(\xi)e^{-i\langle x,\xi\rangle} d\xi \bigg|\le C|x|^{1-n}, \quad x\ne 0.
\end{equation}

Assume first that $w$ is outside of a small but fixed conic neighborhood of the positive real axis $[0,\infty)$, i.e. $\arg w\in [\theta,2\pi -\theta]$ for some $\theta>0$ small but fixed, and  $|w|=1$.
Let us establish that
\[
b_w(\xi)=\frac{h(\xi)}{a(\xi)-w}\in C^\infty(\R^n\setminus\{0\}),
\]
satisfies
\begin{equation}
\label{eq_lem_a_3}
|\p_\xi^\alpha b_w(\xi)|\le B_{\alpha} |\xi|^{-1-|\alpha|}, \quad\xi\ne 0, \quad \alpha\in \N^n_0,
\end{equation}
uniformly in $w$.

To that end, let us show that
\begin{equation}
\label{eq_den_1}
|a(\xi)-w|\ge \frac{1}{C_\theta}(|\xi|+1),
\end{equation}
with a constant $C_\theta>0$ uniformly in $w$. When doing so, we notice there is a constant $\delta>0$ such that
$a(\xi)\ge \delta |\xi|$,
and then  \eqref{eq_den_1} follows for all $|\xi|$ large enough.
It remains to consider the case when $|\xi|$ is bounded.
Then if $\arg w\in [\theta, \pi-\theta]\cup [\pi+\theta,2\pi -\theta]$, we get
\[
|a(\xi)-w|\ge |\textrm{Im}(w)|\ge \frac{1}{C_\theta}.
\]
If $\arg w\in (\pi-\theta,\pi+\theta)$, we write $\arg w=\pi+\mathcal{O}(\theta)$. Then $w=-1-\mathcal{O}(\theta)$, and therefore,
 \[
 |a(\xi)-w|=|a(\xi)+1+\mathcal{O}(\theta)|\ge \frac{1}{2},
 \]
 for $\theta$ small enough. The bound \eqref{eq_den_1} follows.

By the Leibniz formula we write
\begin{equation}
\label{eq_den_5}
\p_\xi^\alpha (b_w(\xi))=\sum_{\beta+\gamma=\alpha} C_{\beta,\gamma}\p_\xi^\beta (h(\xi))\p_\xi^\gamma \bigg( \frac{1}{a(\xi)-w}\bigg),
\end{equation}
with constants $C_{\beta,\gamma}$. It follows from the Fa\`a di Bruno formula \eqref{eq_Faa_di_Bruno} and \eqref{eq_den_1} that for $|\gamma|\ge 0$,
\begin{equation}
\label{eq_den_6}
\bigg| \p_\xi^\gamma\bigg( \frac{1}{a(\xi)-w}\bigg)\bigg|\le C_{\gamma,\theta} |\xi|^{-1-|\gamma|}, \quad \xi\ne 0,
\end{equation}
uniformly in $w$.
Hence, we conclude from \eqref{eq_den_5}, with the help of \eqref{eq_nom_h}  and \eqref{eq_den_6}, that  \eqref{eq_lem_a_3} holds.

Thus, applying \eqref{eq_lem_a_0} for $b_w$, we obtain that
\begin{equation}
\label{eq_lem_a_4}
\bigg|\int_{\R^n}\frac{h(\xi) e^{i\langle x,\xi\rangle}}{a(\xi)-w}d\xi\bigg|\le C|x|^{1-n}, \quad x\ne 0,
\end{equation}
uniformly in $w\in \C$,  $\arg w\in [\theta,2\pi -\theta]$ with $\theta>0$ small but fixed, and $|w|=1$.

Assume now that $w\in \C$,  $\arg w\in [\theta,2\pi -\theta]$ with $\theta>0$ small but fixed,  and $|w|\ne 1$. Letting  $\tilde w=w/|w|$, we have
\[
\int_{\R^n}\frac{h(\xi) e^{i\langle x,\xi\rangle}}{a(\xi)-w}d\xi=\frac{1}{|w|}\int_{\R^n}\frac{h(\xi) e^{i\langle x,\xi\rangle}}{a(\xi/|w|)-\tilde w}d\xi=|w|^{n-1}
\int_{\R^n}\frac{h(|w| \xi) e^{i\langle |w| x,\xi\rangle}}{a(\xi)-\tilde w}d\xi.
\]
Since the dilate $h(|w|\xi)$ of $h(\xi)$ satisfies exactly the same bounds as in \eqref{eq_nom_h},  as above, we obtain the uniform estimate \eqref{eq_lem_a_4}, for all $w\in \C$,  $\arg w\in [\theta,2\pi -\theta]$ with $\theta>0$ small but fixed.

Assume now that  $w\in \C\setminus[0,\infty)$,  $\arg w\in (-\theta, \theta)$ with $\theta>0$  small but fixed, and $|w|=1$. Then   $w=1+\mathcal{O}(\theta)$, and therefore,
\[
|a(\xi)-w|=|a(\xi)-1-\mathcal{O}(\theta)|\ge \frac{1}{C},
\]
for $\xi\notin a^{-1} ([1/2,2])$, uniformly in $w$. Hence, letting $0\le \chi\in C_0^\infty((0,\infty))$ be such that $\chi(t)=1$ when $t\in [1/2,2]$ and $\supp(\chi)\subset [1/4, 4]$, by the above argument,  we conclude that
\[
b_w(\xi):=\frac{h(\xi) (1-\chi(a(\xi)))}{a(\xi)-w}
\]
satisfies the bound \eqref{eq_lem_a_3} uniformly in $w$. Therefore,
\[
\bigg|\int_{\R^n}\frac{h(\xi)  (1-\chi(a(\xi))) e^{i\langle x,\xi\rangle}}{a(\xi)-w}d\xi\bigg|\le C|x|^{1-n},
\]
uniformly in $w\in \C\setminus[0,\infty)$,  $\arg w\in (-\theta, \theta)$ with $\theta>0$  small but fixed, and $|w|=1$.

Let us now write,
\begin{equation}
\label{eq_lem_a_4_1}
I(x)=\int_{\R^n}\frac{h(\xi)  \chi(a(\xi)) e^{i\langle x,\xi\rangle}}{a(\xi)-w}d\xi=I_1(x)+I_2(x),
\end{equation}
where
\[
I_1(x):=\int_{\R^n}\frac{h(\xi)  \chi(a(\xi)) (a(\xi)-w_1) e^{i\langle x,\xi\rangle}}{(a(\xi)-w_1)^2+w_2^2}d\xi, I_2(x)=\int_{\R^n} \frac{i h(\xi)  \chi(a(\xi)) w_2 e^{i\langle x,\xi\rangle}}{(a(\xi)-w_1)^2+w_2^2}d\xi.
\]
Here  $w_1=\Re w=1+\mathcal{O}(\mu^2)$,  $w_2=\Im w=\mu + \mathcal{O}(\mu^2)$, and $\mu:=\arg w$, $|\mu|$ small.

Using the coarea formula in the integral $I_2(x)$, we get
\begin{equation}
\label{eq_lem_a_5}
\begin{aligned}
|I_2(x)|&\le C|w_2|\int_{a^{-1}([1/4,4])}\frac{d\xi}{(a(\xi)-w_1)^2+w_2^2}\\
&=C|w_2|\int_{1/4}^4 \int_{a(\xi)=E}\frac{dS_{E}}{|\nabla_\xi a(\xi)|}\frac{dE}{(E-w_1)^2+w_2^2},
\end{aligned}
\end{equation}
where $dS_E$ is the Lebesque measure on the surface $a(\xi)=E$.

Let us notice that by Euler homogeneity relations for    $a(\xi)=E$, we have
\[
|\nabla_\xi a(\xi)|\ge 1/C, \quad
\]
uniformly in $E\in [1/4,4]$.
Therefore,
\begin{equation}
\label{eq_lem_a_6_1}
|I_2(x)|\le C|w_2| \int_{1/4}^4 \frac{dE}{(E-w_1)^2+w_2^2}\le C|w_2| \int_{-\infty}^{+\infty} \frac{dE}{E^2+w_2^2}\le C,
\end{equation}
uniformly in $\mu$.

Appealing to the coarea formula in the integral $I_1(x)$,  we get
\begin{equation}
\label{eq_lem_a_7}
\begin{aligned}
I_1(x)&=\int_{a^{-1}([1/4,4])}\frac{h(\xi) \chi(a(\xi)) (a(\xi)-w_1) e^{i\langle x,\xi\rangle}}{(a(\xi)-w_1)^2+w_2^2}d\xi\\
&=\int_{1/4}^4 \frac{(E-w_1)}{(E-w_1)^2+w_2^2} J(E,x)dE,
\end{aligned}
\end{equation}
where
\[
J(E,x)=\chi(E)\int_{a(\xi)=E} \frac{h(\xi)e^{i\langle x,\xi\rangle}}{|\nabla_\xi a(\xi)|}dS_E=E^{n-1} \chi(E)\int_{a(\xi)=1} \frac{h(E\xi) e^{i\langle x,E\xi\rangle}}{|\nabla_\xi a(\xi)|}dS_1.
\]
We see that $J(E,x)$ is $C^\infty$ in $E$, $x$.
Making the change of variables $E\mapsto E-w_1$ in \eqref{eq_lem_a_7}, we get
\begin{align*}
I_1(x)&=\bigg(\int_{1/4-w_1}^0 + \int_0^{w_1-1/4}+ \int_{w_1-1/4}^{4-w_1}\bigg) \frac{E}{E^2+w_2^2} J(E+w_1,x)dE\\
&=\int_{0}^{w_1-1/4}\frac{E(J(E+w_1,x)-J(-E+w_1,x))}{E^2+w_2^2} dE \\
&+ \int_{w_1-1/4}^{4-w_1} \frac{E}{E^2+w_2^2} J(E+w_1,x)dE.
\end{align*}
As $f(E)=J(E+w_1,x)-J(-E+w_1,x)$ is $C^\infty$  in  $E$, $w_1$, and $x$,  and $f(0)=0$, it follows that  $f(E)=Eg(E)$ with a function $g$ which is $C^\infty$   in  $E$, $w_1$, and $x$. Hence, recalling that $w_1=1+\mathcal{O}(\mu^2)$, for $|x|\le 1$,   we get
\begin{equation}
\label{eq_lem_a_8}
|I_1(x)|\le C\int_0^2 \frac{E^2}{E^2+w_2^2}dE +  C\int_{1/4}^4\frac{1}{E} dE\le C,
\end{equation}
uniformly in $\mu$ with $0<|\mu|\le \theta$, where $\theta$ is  sufficiently small.

We conclude from \eqref{eq_lem_a_4_1}, \eqref{eq_lem_a_6_1} and \eqref{eq_lem_a_8} that
\[
|I(x)|\le C,
\]
for  $|x|\le 1$, uniformly in $\mu$ with $0<|\mu|\le \theta$, where $\theta$ is  sufficiently small.

Let us now show that when $|x|\ge 1$, we get
\begin{equation}
\label{eq_lem_a_9}
|I(x)|\le C |x|^{-\frac{(n-1)}{2}},
\end{equation}
uniformly in $\mu$.   First using the coarea formula in  \eqref{eq_lem_a_4_1}, we get
\begin{align*}
I(x)&=\int_{1/4}^4 \int_{a(\xi)=E} \frac{h(\xi)\chi(E) e^{i\langle x,\xi\rangle}}{(E-w)}\frac{dS_E}{|\nabla_\xi a(\xi)|}dE\\
&=
\int_{1/4}^4 \frac{E^{n-1}\chi(E)}{E-w}  \int_{a(\xi)=1} \frac{h(E\xi)}{|\nabla_\xi a(\xi)|} e^{i\langle Ex,\xi\rangle}dS_1 dE.
\end{align*}

To proceed recall that $a(\xi)$ is  homogeneous of degree one, $C^\infty$ for $\xi\ne 0$, and $a(\xi)>0$ on $\R^n\setminus\{0\}$. Then $\nabla_\xi a\ne 0$ along the cosphere $\Sigma=\{\xi\in \R^n: a(\xi)=1\}$, which is therefore is a $C^\infty$ compact hypersurface.  Furthermore, $\Sigma$ is homeomorphic to the sphere $\mathbb{S}^{n-1}$ via  the homeomorphism  $\mathbb{S}^{n-1}\to \Sigma$, $\omega\mapsto \omega/a(\omega)$.  Hence, $\Sigma$ is connected.
The assumption that the Gaussian curvature of $\Sigma$ never vanishes implies that the Gauss map is a diffeomorphism from $\Sigma$ to $\mathbb{S}^{n-1}$.
Thus,  given $x\in \R^n\setminus\{0\}$,  there are exactly two points $\xi_1(x), \xi_2(x)\in \Sigma$ with normal $x$. Since $\xi_1(x)$, $\xi_2(x)$, are homogeneous of degree zero and smooth in $\R^n\setminus\{0\}$, it follows that  the functions $\langle x,\xi_1(x) \rangle$,  $\langle x,\xi_2(x) \rangle$ are also smooth for $x\ne 0$ and homogeneous of degree one.

We shall need the following result concerning  the inverse Fourier transform of a smooth measure carried by the cosphere $\Sigma$,  which is an application of the stationary phase theorem, see \cite[Theorem 1.2.1, p. 50]{Sogge_book} and \cite[p. 68]{Sogge_book}.

\begin{lem} \label{lem_stationary_phase}
Let $d\sigma(\xi)=\beta(\xi)dS(\xi)$ with $\beta\in C^\infty(\Sigma)$ and $dS$ is the surface measure on $\Sigma$. Then
under the above assumptions, the inverse Fourier transform of the measure $d\sigma$  satisfies
\[
(2\pi)^{-n}\int_{\Sigma} e^{i\langle x,\xi \rangle} d\sigma(\xi)=\frac{b_1(x) e^{i\langle x,\xi_1(x) \rangle}}{|x|^{(n-1)/2}}+ \frac{b_2(x) e^{i\langle x,\xi_2(x) \rangle}}{|x|^{(n-1)/2}}, \quad |x|\ge 1,
\]
where the functions $b_j$ are such that
\[
|\p_x^\alpha b_j(x)|\le C_\alpha |x|^{-|\alpha|}, \quad |x|\ge 1, \quad \alpha\in \N^n_0.
\]
\end{lem}

As $\xi_j(x)$ is homogeneous of degree zero, by Lemma \ref{lem_stationary_phase}, for $|x|\ge 1$, we get
\[
I(x)=(2\pi)^n |x|^{-\frac{(n-1)}{2}}\sum_{j=1}^2 \int_{1/4}^4 \frac{E^{(n-1)/2}\chi(E) b_j(x,E)}{E-w} e^{iE\langle x,\xi_j(x)\rangle}dE,
\]
with some functions $b_j\in C^\infty$ for $|x|\ge 1$ and $E\in [1/4,4]$, and
\begin{equation}
\label{eq_lem_a_10_0}
|\p_E^l \p_x^\alpha b_j(x,E)|\le C_{l,\alpha} |x|^{-|\alpha|}, \quad |x|\ge 1,\quad E\in [1/4,4],  \quad l\in \N_0, \quad \alpha\in \N^n_0.
\end{equation}
The estimate \eqref{eq_lem_a_9} would follow if we could show that
\begin{equation}
\label{eq_lem_a_10}
\bigg| \int_{1/4}^4 \frac{E^{(n-1)/2}\chi(E) b_j(x,E)}{E-w} e^{iE\langle x,\xi_j(x)\rangle}dE \bigg|\le C,
\end{equation}
uniformly in $\mu$, $0<|\mu|\le \theta\ll 1$.
To show \eqref{eq_lem_a_10}, we let
\[
f(E,x)=E^{(n-1)/2}\chi(E) b_j(x,E),\quad \varphi(x)=\langle x,\xi_j(x)\rangle.
\]
For $|x|\ge 1$, the function $f(\cdot,x)$ is $C^\infty$ with compact support in $E\in [1/4,4]$, and \eqref{eq_lem_a_10_0} yields that
\begin{equation}
\label{eq_lem_a_11}
|\p_E^l f(E,x)|\le C_l.
\end{equation}
We write
\begin{align*}
J(x)=&\int_{1/4}^4 \frac{f(E,x) e^{iE\varphi(x)}}{E-w}dE =\frac{1}{2\pi}\int_{-\infty}^{+\infty}\hat f(t,x)\int_{-\infty}^{+\infty} \frac{e^{iE(t+\varphi(x))}}{E-w_1-iw_2}dE dt\\
&=-\frac{1}{2\pi i} \int_{-\infty}^{+\infty}\hat f(t,x)e^{i w_1(t+\varphi(x))}\int_{-\infty}^{+\infty}\frac{e^{-i\tau(t+\varphi(x))}}{w_2-i\tau}d\tau dt,
\end{align*}
where $\hat f(t,x)$ is the Fourier transform of $f(E,x)$.  We shall use the following fact:
 for all $\alpha\in\R$, $\alpha\ne 0$,
\[
\frac{1}{2\pi}\int_{-\infty}^{+\infty} \frac{e^{-i\tau t}}{\alpha-i\tau} d\tau=\textrm{sgn} \alpha H(\alpha t) e^{-\alpha t},
\]
where $H(t)$ is the Heaviside function which equals one for $t\ge 0$ and zero for $t<0$, see \cite[Lemma 2.1]{Bourgain_Shao_Sogge_Yao}.   As $w_2\ne 0$, we get
\begin{align*}
J(x)= \int_{-\infty}^{+\infty}\hat f(t,x) i e^{iw_1(t+\varphi(x))}\textrm{sgn}(w_2) H(w_2(t+\varphi(x)))e^{-w_2(t+\varphi(x))}dt,
\end{align*}
and therefore, using that $f$ has compact support in $E$ and  \eqref{eq_lem_a_11}, we obtain that
\begin{align*}
|J(x)|&\le C\int_{-\infty}^{+\infty} |\hat f(t,x)|dt\le C\|(1+t^2)\hat f(t,x)\|_{L^\infty_t}\\
&\le C(\|f(E,x)\|_{L^1_E}+\|\p_E^2 f(E,x)\|_{L^1_E})\le C,
\end{align*}
uniformly in $w$. This establishes \eqref{eq_lem_a_10}, and hence, \eqref{eq_lem_a_9}.  Thus, for $w\in \C\setminus [0,\infty)$, $\textrm{arg} w\in  (-\theta,\theta)$, $\theta>0$ small but fixed, and $|w|=1$,   we get
\begin{equation}
\label{eq_lem_estimate_bourgain_sogge_2}
\bigg|\int_{\R^n}\frac{h(\xi) e^{i\langle x,\xi\rangle}}{a(\xi)-w}d\xi\bigg|\le C(|x|^{1-n}+|x|^{-\frac{(n-1)}{2}}), \quad x\ne 0,
\end{equation}
uniformly in $w$.  In the case when  $w\in \C\setminus [0,\infty)$, $\textrm{arg} w\in  (-\theta,\theta)$, $\theta>0$ small but fixed, and $|w|\ne 1$,  the estimate \eqref{eq_lem_estimate_bourgain_sogge} follows from \eqref{eq_lem_estimate_bourgain_sogge_2} by a change of scale. The proof of Lemma \ref{lem_estimate_bourgain_sogge} is complete.
\end{proof}

Now using Lemma \ref{lem_estimate_bourgain_sogge}, the estimate  \eqref{eq_form_h_r}, and the fact that $\frac{|x-y|}{r}\in [A^{-1}/4,4A]$, we obtain that
\begin{equation}
\label{eq_loc_by_lem_1}
\begin{aligned}
\bigg|\int_{\R^n} \frac{h_r(\tau,x,y,\eta) e^{i  \langle \frac{x-y}{r},\eta\rangle} }{\tilde q(x,y,\eta)-(\tau +rze^{2\pi k i/m})} d\eta\bigg|\le C_N(1+|\tau|)^{-N}(1+|\tau|+r|z|)^{\frac{n-1}{2}},
\end{aligned}
\end{equation}
for $k=0,1,\dots, m-1$ and $N>0$.
It follows from \eqref{eq_K_z,j_phase_12} and  \eqref{eq_loc_by_lem_1}  that for $N>0$ sufficiently large,
\begin{align*}
|K_{z,j}^{(1,2)}(x,y)|&\le C
\frac{ r^{1-n}}{|z|^{m-1}} \int_{-\infty}^{+\infty} (1+|\tau|)^{-N+\frac{n-1}{2}}(1+r|z|)^{\frac{n-1}{2}} d\tau\\
&\le Cr^{-\frac{(n-1)}{2}}|z|^{\frac{n+1-2m}{2}}.
\end{align*}
Here we have used that $r|z|\ge 1$.
Recalling that $r=2^j/|z|$, the above estimate completes the proof of the estimate \eqref{eq_kernel_desired}, and therefore, the estimates \eqref{eq_estim_S_j_L_infty} and \eqref{eq_estim_S_j}.
As $\sum_{j=0}^\infty 2^{j\frac{2n-m-nm}{2n}}=1/(1-2^{\frac{2n-m-nm}{2n}})$, we have obtained the \eqref{eq_resolvent_with_b_loc} for the local operator.

\subsection{Uniform estimate for the non-local operator in the case of unbounded $|z|$}
Let $\tau\in\R$ and consider the multipliers
\begin{equation}
\label{eq_rest_r}
r_z(\tau)=m_z(\tau)-m_z^{\textrm{loc}}(\tau)=\frac{i}{m z^{m-1}}\sum_{k=0}^{m/2-1} e^{2\pi k i/m}\int_{-\infty}^{+\infty} (1-\rho(t)) e^{i |t| \tau_k} e^{it\tau}dt,
\end{equation}
for all $z\in \Xi_\delta\cap\{z\in \C:|z|\ge 1\}$.

In order to prove \eqref{eq_resolvent_est}, we are left with establishing that
\begin{equation}
\label{eq_sec_2_3_0}
\|r_z(Q)f\|_{L^{\frac{2n}{n-m}}(M)}\le C\|f\|_{L^{\frac{2n}{n+m}}(M)},
\end{equation}
for all $z\in \Xi_\delta\cap\{z\in \C:|z|\ge 1\}$, uniformly in $z$.

Let us first show that $r_z(\tau)$ is bounded for all $z\in \Xi_\delta\cap\{z\in \C:|z|\ge 1\}$, uniformly in $z$. Indeed, we have
\begin{equation}
\label{eq_sec_2_3_1}
\begin{aligned}
|r_z(\tau)|\le \frac{C}{|z|^{m-1}} \sum_{k=0}^{m/2-1}\bigg(\int_{-\infty}^{-\varepsilon/2} e^{t\textrm{Im} \tau_k}dt + \int_{\varepsilon/2}^{+\infty} e^{-t\textrm{Im}\tau_k}dt\bigg)\le C \sum_{k=0}^{m/2-1}\frac{1}{\textrm{Im} \tau_k}.
\end{aligned}
\end{equation}
Recall that $\tau_k=ze^{2\pi ki/m}$, and therefore, $0<\arg(\tau_k)<\pi$, $k=0,\dots, m/2-1$. If now $0<\arg(\tau_k)\le \pi/2$, then
\[
\frac{\textrm{Im}\tau_k}{|z|}=\sin(\arg(\tau_k))\ge \sin(\arg(z)),
\]
and thus, using the fact that $z\in \Xi_\delta$, we get
\begin{equation}
\label{eq_sec_2_3_2}
\textrm{Im}\tau_k\ge \textrm{Im} z\ge \delta.
\end{equation}
If $\pi/2<\arg(\tau_k)< \pi$, then
\[
\frac{\textrm{Im}\tau_k}{|z|}=\sin(\pi- \arg(\tau_k))\ge \sin(\pi-\arg(\tau_{m/2-1}))=-\sin(\arg(z)-2\pi/m),
\]
and therefore,
\begin{equation}
\label{eq_sec_2_3_3}
\textrm{Im}\tau_k\ge -\textrm{Im} (ze^{-2\pi i/m})\ge \delta.
\end{equation}
Hence, it follows from \eqref{eq_sec_2_3_1}, \eqref{eq_sec_2_3_2} and \eqref{eq_sec_2_3_3} that
\begin{equation}
\label{eq_sec_2_3_4}
|r_z(\tau)|\le C\delta^{-1},
\end{equation}
for all $z\in \Xi_\delta\cap\{z\in \C:|z|\ge 1\}$, uniformly in $z$.

To obtain the decay of $r_z(\tau)$, let us integrate by parts $N$ times, $N=1,2,\dots$, in \eqref{eq_rest_r}.  We have
\begin{align*}
r_z(\tau)= \frac{i}{m z^{m-1}}\sum_{k=0}^{m/2-1} e^{2\pi k i/m}\bigg(& \frac{(-1)^N}{i^N(-\tau_k+\tau)^N}\int_{-\infty}^{0}(-\p_t^N \rho(t)) e^{it(-\tau_k+\tau)}dt\\
&+\frac{(-1)^N}{i^N(\tau_k+\tau)^N}\int_{0}^{+\infty}(-\p_t^N \rho(t)) e^{it(\tau_k+\tau)}dt\bigg).
\end{align*}
Notice that all the boundary terms disappear when integrating by parts due to the presence of the term $(1-\rho(t))$ in \eqref{eq_rest_r} and the fact that $\textrm{Im}\tau_k>0$. As
\begin{align*}
|\pm \tau_k+\tau|=\sqrt{|\pm \Re \tau_k+\tau|^2+|\Im \tau_k|^2}&\ge \sqrt{|\pm \Re \tau_k+\tau|^2+\delta^2}\\
&\ge \frac{\delta}{\sqrt{2}}(1+|\pm \Re \tau_k+\tau|),
\end{align*}
where $\delta<1$, we obtain that
\[
|r_z(\tau)|\le \frac{C}{|z|^{m-1}}\sum_{k=0}^{m/2-1}((1+|-\Re\tau_k+\tau|)^{-N}+ (1+|\Re\tau_k+\tau|)^{-N}),
\]
uniformly in $z$.  Thus, for $\tau\ge 0$, we get
\begin{equation}
\label{eq_sec_2_3_5}
|r_z(\tau)|\le \frac{C}{|z|^{m-1}}\bigg(\sum_{\underset{\textrm{Re} \tau_k\ge 0}{k=0,\dots, m/2-1}} (1+|-\Re\tau_k+\tau|)^{-N} +
\sum_{\underset{\textrm{Re} \tau_k< 0}{k=0,\dots, m/2-1}}
(1+|\Re\tau_k+\tau|)^{-N} \bigg)
\end{equation}

We have
\begin{equation}
\label{eq_sec_2_3_6}
r_z(Q)f=\sum_{j=1}^\infty r_z(\mu_j)E_jf=\sum_{l=1}^\infty r_z^l(Q)f, \quad f\in C^\infty(M),
\end{equation}
where
\[
r_z^l(Q)f=\sum_{\mu_j\in [l-1,l)}r_z(\mu_j) E_j f, \quad l=1,2,\dots.
\]
Using  Lemma \ref{lem_truncated_eq} and \eqref{eq_sec_2_3_5} with $N=m+1$, we obtain that
\begin{equation}
\label{eq_sec_2_3_7}
\begin{aligned}
\|r_z^l(Q)&f\|_{L^{\frac{2n}{n-m}}(M)}\le C l^{m-1}(\sup_{\tau\in [l-1,l)}|r_z(\tau)|)\|f\|_{L^{\frac{2n}{n+m}(M)}} \le  \frac{Cl^{m-1}}{|z|^{m-1}}  \\
&\bigg(\sum_{\underset{\textrm{Re} \tau_k\ge 0}{k=0}}^{ m/2-1} \frac{1}{(1+|-\Re\tau_k+l |)^{m+1}}
+
\sum_{\underset{\textrm{Re} \tau_k< 0}{k=0}}^{ m/2-1}
\frac{1}{(1+|\Re\tau_k+l |)^{m+1}} \bigg) \|f\|_{L^{\frac{2n}{n+m}}(M)}.
\end{aligned}
\end{equation}
Here we have used the fact that for $l-1\le \tau\le l$, we have
\[
|\pm\Re\tau_k+l |\le |\pm \Re \tau_k+\tau|+|l-\tau|\le  |\pm \Re \tau_k+\tau| +1.
\]

Hence, \eqref{eq_sec_2_3_0} would follow from \eqref{eq_sec_2_3_6} and \eqref{eq_sec_2_3_7}, if we could show that
\begin{equation}
\label{eq_sec_2_3_8}
\Sigma:=\frac{1}{|z|^{m-1}}  \sum_{l=1}^\infty \frac{l^{m-1}}{(1+|-a+l |)^{m+1}} \le  C, \quad a=|\Re \tau_k|,
\end{equation}
with some constant $C>0$ uniform in $z\in \C$, $|z|\ge 1$.

Let us now show \eqref{eq_sec_2_3_8}. Assume first that $a\le 1$. Then
\begin{align*}
\Sigma=\frac{1}{|z|^{m-1}}  \sum_{l=1}^\infty \frac{l^{m-1}}{(1-a+l )^{m+1}} \le \frac{1}{|z|^{m-1}}  \sum_{l=1}^\infty \frac{1}{l^2}\le C,
\end{align*}
with a constant $C>0$ uniform in $z\in \C$, $|z|\ge 1$. Consider now the case $a>1$. Then denoting $[a]$ the integer part of $a$, we write
\[
\Sigma=\Sigma_1+\Sigma_2 + \Sigma_3,
\]
where
\begin{align*}
\Sigma_1&:=\frac{1}{|z|^{m-1}}  \sum_{l\le [a]-1} \frac{l^{m-1}}{(1+a-l )^{m+1}},\\
\Sigma_2&:= \frac{1}{|z|^{m-1}} \bigg( \frac{[a]^{m-1}}{(1+|-a+[a] |)^{m+1}} + \frac{([a]+1)^{m-1}}{(1+|-a+[a]+1 |)^{m+1}} \bigg),\\
\Sigma_3&:=\frac{1}{|z|^{m-1}}  \sum_{l\ge [a]+2} \frac{l^{m-1}}{(1-a+l )^{m+1}}.
\end{align*}
Using the fact that $a\le |z|$, we see that $\Sigma_2 \le C$, uniformly in $z\in \C$, $|z|\ge 1$.

We shall next estimate $\Sigma_3$. As the function $t^{m-1}/(1-a+t)^{m+1}$ is decreasing for $t>0$, we get
\begin{align*}
\Sigma_3&\le \frac{1}{|z|^{m-1}} \int_{[a]+1}^{+\infty} \frac{t^{m-1}}{(1-a+t)^{m+1}}dt=\frac{1}{|z|^{m-1}} \int_{2+[a]-a}^{+\infty}\frac{(t+a-1)^{m-1}}{t^{m+1}}dt\\
&\le \frac{C_m}{|z|^{m-1}}\bigg(\int_1^{+\infty} \frac{dt}{t^2} + (a-1)^{m-1}\int_1^{+\infty} \frac{dt}{t^{m+1}}\bigg)\le C,
\end{align*}
uniformly in $z\in \C$, $|z|\ge 1$.

Let us now estimate $\Sigma_1$. Since the function $t^{m-1}/(1+a-t)^{m+1}$ is increasing for $t>0$, we obtain that
\begin{align*}
\Sigma_1\le \frac{1}{|z|^{m-1}}  \int_1^{[a]} \frac{t^{m-1}}{(1+a-t )^{m+1}}dt\le \frac{1}{|z|^{m-1}}  \int_{1+a-[a]}^{a} \frac{|1+a-t|^{m-1}}{t^{m+1}}dt\\
\le \frac{C_m}{|z|^{m-1}}\bigg( (1+a)^{m-1}\int_1^{+\infty} \frac{dt}{t^{m+1}} + \int_1^{+\infty} \frac{dt}{t^2}\bigg)\le C,
\end{align*}
uniformly in $z\in \C$, $|z|\ge 1$.
This completes the proof of \eqref{eq_sec_2_3_8} and hence, of Theorem \ref{thm_main}.

Finally let us remark that the a priori estimate \eqref{eq_resolvent_est} implies the following simple result concerning the $L^2$ resolvent of $P$, $(P-\zeta)^{-1}$.
\begin{prop}
\label{eq_prop_resolvent_on_L_p}
Let $\zeta\in\C\setminus[0,\infty)$. Then the resolvent $(P-\zeta)^{-1}$  is a bounded operator:  $L^{\frac{2n}{n+m}}(M)\to L^{\frac{2n}{n-m}}(M)$.
\end{prop}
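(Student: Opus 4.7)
The plan is to reduce the proposition to Remark \ref{rem_non_uniform_est} together with a standard density argument, using that $M$ is compact so that $L^2(M) \subset L^{\frac{2n}{n+m}}(M)$ and $C^\infty(M)$ is dense in $L^{\frac{2n}{n+m}}(M)$.

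First, I would fix $\zeta \in \C \setminus [0,\infty)$ and record three facts about the $L^2$-resolvent $R_\zeta := (P-\zeta)^{-1}$. It is bounded on $L^2(M)$ by the spectral theorem, since the spectrum of $P$ lies in $[0,\infty)$. Its range is the domain $\mathcal{D}(P) = H^m(M)$. Moreover, since $P - \zeta$ is an elliptic differential operator on $M$ of order $m$ with smooth coefficients, elliptic regularity gives: if $f \in C^\infty(M)$ and $u \in L^2(M)$ solves $(P-\zeta)u = f$, then $u \in C^\infty(M)$. In particular $R_\zeta(C^\infty(M)) \subset C^\infty(M)$.

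Next, for $f \in C^\infty(M)$ set $u = R_\zeta f \in C^\infty(M)$, so that $(P-\zeta)u = f$. Applying the (non-uniform) a priori estimate of Remark \ref{rem_non_uniform_est} yields
\begin{equation*}
\|R_\zeta f\|_{L^{\frac{2n}{n-m}}(M)} = \|u\|_{L^{\frac{2n}{n-m}}(M)} \le C_\zeta \|(P-\zeta)u\|_{L^{\frac{2n}{n+m}}(M)} = C_\zeta \|f\|_{L^{\frac{2n}{n+m}}(M)}.
\end{equation*}
Thus $R_\zeta$, viewed as a linear map from the subspace $C^\infty(M) \subset L^{\frac{2n}{n+m}}(M)$ into $L^{\frac{2n}{n-m}}(M)$, is bounded with constant $C_\zeta$.

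Finally, I would extend this bound to all of $L^{\frac{2n}{n+m}}(M)$ by density. Since $M$ is compact, $C^\infty(M)$ is dense in $L^{\frac{2n}{n+m}}(M)$, so given $f \in L^{\frac{2n}{n+m}}(M)$ I pick $f_k \in C^\infty(M)$ with $f_k \to f$ in $L^{\frac{2n}{n+m}}(M)$; the displayed estimate applied to $f_k - f_l$ shows that $\{R_\zeta f_k\}$ is Cauchy in $L^{\frac{2n}{n-m}}(M)$, and its limit defines a bounded extension $\widetilde{R}_\zeta : L^{\frac{2n}{n+m}}(M) \to L^{\frac{2n}{n-m}}(M)$ with the same norm bound $C_\zeta$. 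The last point to verify is that $\widetilde{R}_\zeta$ coincides with $R_\zeta$ on $L^2(M)$, which is the only thing requiring a moment of care: since $M$ is compact, $L^2(M) \hookrightarrow L^{\frac{2n}{n+m}}(M)$ continuously, and for $f \in L^2(M)$ one can choose the approximating sequence $f_k \in C^\infty(M)$ to converge to $f$ in both $L^2(M)$ and $L^{\frac{2n}{n+m}}(M)$ (by convolving $f$ with a mollifier in local charts and using a partition of unity, or by invoking density of $C^\infty(M)$ in $L^2(M)$ together with the continuous embedding). Then $R_\zeta f_k \to R_\zeta f$ in $L^2(M)$ by boundedness of $R_\zeta$ on $L^2$, while $R_\zeta f_k \to \widetilde{R}_\zeta f$ in $L^{\frac{2n}{n-m}}(M)$; passing to a subsequence that converges almost everywhere identifies the two limits, so $\widetilde{R}_\zeta f = R_\zeta f$. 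This proves the proposition; there is no essential obstacle, as the estimate in Remark \ref{rem_non_uniform_est} does all the analytic work.
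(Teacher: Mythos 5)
Your proof is correct and follows essentially the same route as the paper: apply the non-uniform a priori estimate of Remark~\ref{rem_non_uniform_est} to $R_\zeta f$ for $f\in C^\infty(M)$ (using elliptic regularity to ensure $R_\zeta f\in C^\infty(M)$), then extend by density. The one place you diverge is the identification step. You only pin down the extension $\widetilde R_\zeta$ against the $L^2$-resolvent for $f\in L^2(M)$, via an a.e.\ convergent subsequence; since $L^{2n/(n+m)}(M)\supsetneq L^2(M)$, this leaves $\widetilde R_\zeta f$ unidentified as a resolvent for $f\in L^{2n/(n+m)}(M)\setminus L^2(M)$. The paper handles this uniformly by first observing that $P-\zeta:\mathcal{D}'(M)\to\mathcal{D}'(M)$ is a linear homeomorphism, so $(P-\zeta)^{-1}$ is already a well-defined continuous map on distributions; then for any $f\in L^{2n/(n+m)}(M)$ with $f_j\to f$ in $L^{2n/(n+m)}(M)$, both $(P-\zeta)^{-1}f_j\to(P-\zeta)^{-1}f$ in $\mathcal{D}'(M)$ and $(P-\zeta)^{-1}f_j$ converges in $L^{2n/(n-m)}(M)$, and uniqueness of distributional limits gives the conclusion at once. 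Your approach buys nothing extra and, strictly speaking, leaves the meaning of $(P-\zeta)^{-1}f$ unspecified off $L^2(M)$; to close that you could either adopt the paper's distributional framing, or add that $(P-\zeta)\widetilde R_\zeta f=f$ in $\mathcal{D}'(M)$ by continuity of $P-\zeta$ and density, which identifies $\widetilde R_\zeta$ as the inverse of $P-\zeta$ on $L^{2n/(n+m)}(M)$.
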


\begin{proof}
Let $\zeta\notin\{\lambda_1,\lambda_2,\dots\}$  so that  $(P-\zeta)^{-1}:L^2(M)\to L^2(M)$ is  bounded.    By elliptic regularity, we have $(P-\zeta)^{-1}C^\infty(M)\subset C^\infty(M)$, and therefore, the linear continuous operator  $P-\zeta:C^\infty(M)\to C^\infty(M)$ is bijective. By the open mapping theorem,  $(P-\zeta)^{-1}: C^\infty(M)\to C^\infty(M)$ is continuous.

We have next the linear continuous map $P-\zeta:\mathcal{D}'(M)\to \mathcal{D}'(M)$ given by
\[
\langle (P-\zeta)u, \varphi\rangle =\langle u, \overline{(P-\overline{\zeta})\overline{\varphi}}\rangle , \quad \varphi\in C^\infty(M),
\]
which is bijective, with continuous inverse $(P-\zeta)^{-1}:\mathcal{D}'(M)\to \mathcal{D}'(M)$.

By Remark \ref{rem_non_uniform_est}, when $\zeta\in \C\setminus [0,\infty)$, we have the following a priori estimate
\[
\|u\|_{L^{\frac{2n}{n-m}}(M)}\le C_\zeta \|(P-\zeta)u\|_{L^{\frac{2n}{n+m}}(M)},
\]
for all $u\in C^\infty(M)$.  Thus, for any $f\in C^\infty(M)$, we get
\begin{equation}
\label{eq_operat_dist_2}
\| (P-\zeta)^{-1}f\|_{L^{\frac{2n}{n-m}}(M)}\le C_\zeta \|f\|_{L^{\frac{2n}{n+m}}(M)}.
\end{equation}
Now let $f\in L^{\frac{2n}{n+m}}(M)$. Then there is a sequence $f_j\in C^\infty(M)$, converging to $f$ in $L^{\frac{2n}{n+m}}(M)$ as $j\to \infty$. It follows from \eqref{eq_operat_dist_2} that $(P-\zeta)^{-1}f_j$
is a Cauchy sequence in $L^{\frac{2n}{n-m}}(M)$, and therefore, it converges in $L^{\frac{2n}{n-m}}(M)$.  As $(P-\zeta)^{-1}:\mathcal{D}'(M)\to \mathcal{D}'(M)$ continuous, we have $(P-\zeta)^{-1}f\in L^{\frac{2n}{n-m}}(M)$ and  $(P-\zeta)^{-1}f_j$ converges to $(P-\zeta)^{-1}f$ in $L^{\frac{2n}{n-m}}(M)$ as $j\to \infty$.
Hence, \eqref{eq_operat_dist_2} is valid for any $f\in L^{\frac{2n}{n+m}}(M)$, which shows the claim of Proposition \ref{eq_prop_resolvent_on_L_p}.
\end{proof}

\section{Saturation of the resolvent estimates. Proof of Theorem \ref{thm_main_2}}

\label{sec_proof_thm2}

We shall need the following Bernstein type inequality, established in  \cite[Lemma 3.1]{Bourgain_Shao_Sogge_Yao}.

\begin{lem}
\label{lem_Bernstein_type}
 Let $\beta\in C_0^\infty(\R)$ be such that $0\notin \supp(\beta)$. Then if $1\le q\le r\le \infty$, there is a constant $C=C(r,q)$ so that
\[
\|\beta(Q/\alpha) f\|_{L^r(M)}\le C\alpha^{n(\frac{1}{q}-\frac{1}{r})}\|f\|_{L^q(M)}, \quad \alpha \ge 1.
\]

\end{lem}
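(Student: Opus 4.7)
The plan is to reduce the desired inequality to two endpoint bounds for $\beta(Q/\alpha)$ --- an $L^1 \to L^\infty$ bound of size $\alpha^n$ and an $L^p \to L^p$ bound uniform in $\alpha$ --- and then to obtain the full range by Riesz--Thorin interpolation. Since $\beta \in C_0^\infty(\R)$ with $0 \notin \supp \beta$, we may assume $\supp \beta \subset [a, b]$ with $0 < a < b$, so in the spectral expansion $\beta(Q/\alpha) f = \sum_j \beta(\mu_j/\alpha) E_j f$ only eigenvalues $\mu_j \in [a\alpha, b\alpha]$ contribute.

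The first endpoint is equivalent to the pointwise bound $|K_\alpha(x, y)| \le C\alpha^n$ on the Schwartz kernel $K_\alpha$ of $\beta(Q/\alpha)$. I would prove this via the dyadic spectral cluster decomposition used already in the proof of Lemma \ref{lem_based_on_littlewood_paley}: write $\beta(Q/\alpha) = \sum_k \beta_k(Q)$ with $\beta_k(\tau) = \beta(\tau/\alpha)\chi_{[k-1, k)}(\tau)$, so that only $O(\alpha)$ clusters with $k \sim \alpha$ contribute. The spectral cluster estimate \eqref{eq_spectral_cluster} at $p = \infty$ together with its dual \eqref{eq_spectral_cluster_2} give $\|\chi_k\|_{L^1(M) \to L^\infty(M)} \le C k^{n-1}$, and the argument of Lemma \ref{lem_truncated_eq} applied at $p = \infty$ yields $\|\beta_k(Q)\|_{L^1(M) \to L^\infty(M)} \le C k^{n-1} \|\beta\|_\infty$; summing over the relevant clusters produces
\[
\|\beta(Q/\alpha)\|_{L^1(M) \to L^\infty(M)} \le C \sum_{k \sim \alpha} k^{n-1} \le C\alpha^n.
\]
Equivalently the same kernel bound follows from the H\"ormander sharp local Weyl estimate $\sum_{\mu_j \le \alpha} |e_j(x)|^2 \le C\alpha^n$ combined with Cauchy--Schwarz applied to $K_\alpha(x, y) = \sum_j \beta(\mu_j/\alpha) e_j(x) \overline{e_j(y)}$.

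The second endpoint, $\|\beta(Q/\alpha)\|_{L^p(M) \to L^p(M)} \le C_p$ uniformly in $\alpha \ge 1$, is trivial for $p = 2$ by the spectral theorem and follows for general $1 \le p \le \infty$ from a H\"ormander-type spectral multiplier theorem applied to the smooth compactly supported symbol $\beta(\cdot/\alpha)$, see \cite[Ch.~5]{Sogge_book}. Given the two endpoints, Riesz--Thorin interpolation between $(L^1, L^\infty)$ with norm $C\alpha^n$ and $(L^{p_1}, L^{p_1})$ with norm $C$, taking $\theta = 1 - (1/q - 1/r) \in [0, 1]$ and $p_1 = \theta r$ so that $\frac{1}{q} = (1 - \theta) + \theta/p_1$ and $\frac{1}{r} = \theta/p_1$, produces the claimed bound $C\alpha^{n(1-\theta)} = C\alpha^{n(1/q - 1/r)}$. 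The main obstacle is the kernel bound, since this is the only step where the microlocal structure of $Q$ genuinely enters; everything else is bookkeeping. If one prefers to avoid invoking the spectral multiplier theorem, one may instead derive $\|\beta(Q/\alpha)\|_{L^2 \to L^r} \le C\alpha^{n(1/2 - 1/r)}$ for $r \ge 2$ directly from the cluster decomposition plus Cauchy--Schwarz, and combine this with duality and the factorization $\beta = \beta_1 \beta_2$ (both $\beta_i \in C_0^\infty(\R)$ away from $0$) to cover all pairs $(q, r)$ with $q \le r$.
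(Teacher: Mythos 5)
This lemma is not proved in the paper: it is quoted from \cite[Lemma~3.1]{Bourgain_Shao_Sogge_Yao}, where the argument writes $\beta(Q/\alpha) = \frac{\alpha}{2\pi}\int \hat\beta(\alpha t)\,e^{itQ}\,dt$, uses the H\"ormander--Lax parametrix of Lemma~\ref{lem_Lax_parametrix} (the factor $\hat\beta(\alpha t)$ confines the integral to $|t|\le C/\alpha$, inside the parametrix's range of validity), and applies stationary phase in $\xi$ to obtain the \emph{off-diagonal} kernel bound $|K_\alpha(x,y)|\le C_N\,\alpha^n(1+\alpha\,d(x,y))^{-N}$ for every $N$, for a fixed reference distance $d$ on $M$; the full range of pairs $q\le r$ then drops out at once from Young's inequality. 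Your endpoint-plus-interpolation plan is plausible in outline, but it misplaces the difficulty. The on-diagonal bound $|K_\alpha(x,y)|\le C\alpha^n$, which you describe as ``the main obstacle'' and ``the only step where the microlocal structure of $Q$ genuinely enters,'' is in fact the easy half: as you note, it follows from H\"ormander's sharp local Weyl estimate plus Cauchy--Schwarz, and it gives only the $L^1\to L^\infty$ endpoint. The genuinely microlocal step is your \emph{other} endpoint, the uniform-in-$\alpha$ bound $\|\beta(Q/\alpha)\|_{L^p\to L^p}\le C_p$ for $p\ne 2$, which you dispose of by citing ``a H\"ormander-type spectral multiplier theorem.'' That theorem is itself proved precisely by establishing the off-diagonal kernel decay you are trying to avoid, so nothing has been gained; and without an $L^p\to L^p$ bound for some $p\ne 2$, Riesz--Thorin between $L^1\to L^\infty$ and $L^2\to L^2$ reaches only the dual line $1/q+1/r=1$, not the full triangle $1/r\le 1/q$.

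The fallback you sketch at the end has the same coverage gap. The cluster estimates plus Cauchy--Schwarz in $k\sim\alpha$ (using orthogonality of the $\chi_k f$) do give $\|\beta(Q/\alpha)\|_{L^2\to L^r}\le C\alpha^{\sigma(r)+1/2}=C\alpha^{n(1/2-1/r)}$ for $r\ge\frac{2(n+1)}{n-1}$, and interpolation with the trivial $L^2\to L^2$ bound fills in $2\le r\le\frac{2(n+1)}{n-1}$ (note \eqref{eq_spectral_cluster} is quoted in this paper only above that threshold). Duality then gives $L^q\to L^2$ for $q\le 2$, and the factorization $\beta=\beta_1\beta_2$ composes these to cover $q\le 2\le r$. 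But $2<q\le r$ and $q\le r<2$ remain out of reach by this device, since the factorization always routes through $L^2$: one still needs an $L^1\to L^1$ or $L^\infty\to L^\infty$ endpoint to reach the rest of the region, which is the very bound you have not supplied. The parametrix-plus-Young argument of \cite{Bourgain_Shao_Sogge_Yao} obtains all of this in one step, and that is why the paper simply quotes the lemma.
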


In Theorem \ref{thm_main} we obtained the uniform estimate \eqref{eq_resolvent_est} for all $z$ in the sector $\Xi$ of the complex plane such that $\dist(\p\Xi, z)\ge \delta$ for some $\delta>0$.
The next result shows that removing the eigenvalues of the operator $Q=P^{1/m}$ in some interval $[\alpha-1, \alpha+ 1]$ allows us to obtain the uniform estimate \eqref{eq_resolvent_est}  for all $z\in \Xi$ with $\Re z=\alpha\gg 1$ or $\Re (ze^{-2\pi i/m})=\alpha\gg 1$.

\begin{lem}
\label{lem_satur_1}
Let
\[
\chi_{[\alpha-1, \alpha+1)} f=\sum_{\mu_j\in [\alpha-1, \alpha+1)} E_j f.
\]
Then we have the uniform estimate:
\begin{equation}
\label{eq_4_1_1}
\|(I-\chi_{[\alpha-1, \alpha+1)})\circ (P-z^m)^{-1} f\|_{L^{\frac{2n}{n-m}}(M)}\le C\|f\|_{L^{\frac{2n}{n+m}}(M)},
\end{equation}
with $z\in \Xi$, $\emph{\textrm{Re }}z=\alpha\gg 1$, and $0<\emph{\textrm{Im }}z\le 1$, and the uniform estimate:
\begin{equation}
\label{eq_4_1_1_another}
\|(I-\chi_{[\alpha-1, \alpha+1)})\circ (P-z^m)^{-1} f\|_{L^{\frac{2n}{n-m}}(M)}\le C\|f\|_{L^{\frac{2n}{n+m}}(M)},
\end{equation}
with $z\in \Xi$, $\emph{\textrm{Re }}(ze^{-2\pi i/m})=\alpha\gg 1$, and $0<-\emph{\textrm{Im }}(z e^{-2\pi i /m})\le 1$.

\end{lem}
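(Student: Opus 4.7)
My plan is to run the proof of Theorem~\ref{thm_main} in Sections~2.4--2.5 in parallel, replacing the separation $|\Im\tau_k|\ge \delta$, which fails here as $\Im z\to 0$, by the eigenvalue separation $|\mu_j-\alpha|\ge 1$ forced by the cutoff $I-\chi_{[\alpha-1,\alpha+1)}$.  I focus on \eqref{eq_4_1_1}: the estimate \eqref{eq_4_1_1_another} follows by the same argument after observing that the role played in Case~(i) by $\tau_0=z$ is played in Case~(ii) by $-\tau_{m/2-1}=-ze^{2\pi i(m/2-1)/m}$, whose real part equals $\alpha$ and whose imaginary part tends to $0^+$, while the remaining roots $\tau_k$ still satisfy $|\Im\tau_k|\gtrsim \alpha$.

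The starting point is the decomposition $(P-z^m)^{-1}=m_z^{\textrm{loc}}(Q)+r_z(Q)$ from \eqref{eq_resolvent_formula}--\eqref{eq_m_z_loc}, valid throughout $\Xi$.  The entire chain of estimates \eqref{eq_estim_S_0}--\eqref{eq_estim_S_j} in Section~2.4 uses only $\Im\tau_k\ge 0$ and $|z|\ge 1$, never the $\delta$-separation, so $\|m_z^{\textrm{loc}}(Q)f\|_{L^{2n/(n-m)}(M)}\le C\|f\|_{L^{2n/(n+m)}(M)}$ holds uniformly for all $z\in\Xi$ with $|z|\ge 1$.  To control the subtracted piece $\chi_{[\alpha-1,\alpha+1)}m_z^{\textrm{loc}}(Q)$, the key observation is the trivial pointwise bound
\[
|m_z^{\textrm{loc}}(\tau)|\le C/|z|^{m-1}\le C/\alpha^{m-1},\qquad \tau\in\R,
\]
obtained directly from \eqref{eq_m_z_loc} via $|e^{i|t|\tau_k}|\le 1$ and the compact support of $\rho$.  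Since $[\alpha-1,\alpha+1)$ meets at most three unit clusters $[k-1,k)$, each with $k\sim\alpha$, and since $2\sigma(2n/(n-m))=m-1$, Lemma~\ref{lem_truncated_eq} yields
\[
\|\chi_{[\alpha-1,\alpha+1)}m_z^{\textrm{loc}}(Q)f\|_{L^{2n/(n-m)}(M)}\le C\alpha^{m-1}\cdot\alpha^{-(m-1)}\,\|f\|_{L^{2n/(n+m)}(M)}=C\|f\|_{L^{2n/(n+m)}(M)}.
\]

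For the non-local part, decompose $(I-\chi_{[\alpha-1,\alpha+1)})r_z(Q)f=\sum_l \tilde r_z^{l}(Q)f$ with $\tilde r_z^{l}(Q)f=\sum_{\mu_j\in[l-1,l)\setminus[\alpha-1,\alpha+1)}r_z(\mu_j)E_j f$.  The proof of Lemma~\ref{lem_truncated_eq} extends verbatim to the restricted cluster sum and yields
\[
\|\tilde r_z^{l}(Q)f\|_{L^{2n/(n-m)}(M)}\le Cl^{m-1}\sup_{\tau\in[l-1,l)\setminus[\alpha-1,\alpha+1)}|r_z(\tau)|\cdot\|f\|_{L^{2n/(n+m)}(M)}.
\]
Integrating by parts $N$ times in \eqref{eq_rest_r}, exactly as in Section~2.5 but without invoking any $\delta$-separation, gives the raw bound
\[
|r_z(\tau)|\le \frac{C_N}{|z|^{m-1}}\sum_{k=0}^{m/2-1}\bigl(|\tau-\tau_k|^{-N}+|\tau+\tau_k|^{-N}\bigr).
\]
Specializing to Case~(i) with $\tau_0=z=\alpha+i\epsilon$, $\epsilon\in(0,1]$, and $\tau\notin[\alpha-1,\alpha+1)$, one has $|\tau-\tau_0|\ge\max(|\tau-\alpha|,1)\gtrsim 1+|\tau-\alpha|$ and $|\tau+\tau_0|\ge\tau+\alpha$.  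For $k=1,\ldots,m/2-1$, $\arg\tau_k=\arg z+2\pi k/m$ is bounded away from $0$ and $\pi$, hence $|\Im\tau_k|\gtrsim\alpha$; combined with $|\Re\tau_k|\le|z|\sim\alpha$ this gives $|\tau\pm\tau_k|\gtrsim \alpha+\tau$.  Collecting,
\[
|r_z(\tau)|\le \frac{C_N}{\alpha^{m-1}}\Bigl(\frac{1}{(1+|\tau-\alpha|)^N}+\frac{1}{(\alpha+\tau)^N}\Bigr).
\]

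Fixing $N=m+1$, the remaining task is to show that the cluster sum
\[
\sum_l \frac{l^{m-1}}{\alpha^{m-1}}\Bigl(\frac{1}{(1+d_l)^N}+\frac{1}{(\alpha+l)^N}\Bigr)
\]
is bounded uniformly in $\alpha$, where $d_l\ge 1$ denotes the distance from $[l-1,l)\setminus[\alpha-1,\alpha+1)$ to $\alpha$.  The first piece splits into $l\le\alpha-1$ (change of variable $j=\alpha-l$, giving a series dominated by $\sum_{j\ge 1}j^{-N}$), the $O(1)$ values of $l$ near $\alpha$ (contributing $O(1)$), and $l\ge\alpha+2$ (change $j=l-\alpha-1$, handled similarly); the second piece is dominated by $\alpha^{-(N-1)}$ for $l\le\alpha$ and by $\alpha^{-(m-1)}\sum_{l>\alpha}l^{m-1-N}$ for $l>\alpha$, both $O(1)$.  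This final summability verification is the principal obstacle: I must check that one unit of eigenvalue spacing is strong enough to absorb the growth factor $l^{m-1}/\alpha^{m-1}$ coming from the spectral-cluster estimate \eqref{eq_spectral_cluster}, uniformly in $\alpha\gg 1$, even though the usual $\delta$-gap has collapsed.
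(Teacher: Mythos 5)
Your proof is correct, but it takes a genuinely different route from the paper's. The paper never re-opens the local/non-local decomposition of Section~2. It instead splits Case~(i) into two subcases: for $\delta\le\Im z\le 1$ it applies Theorem~\ref{thm_main} as a black box together with Lemma~\ref{lem_truncated_eq} and the elementary lower bound $\Im z^m\gtrsim\delta\alpha^{m-1}$; for $0<\Im z<1/2$ it compares $(P-z^m)^{-1}$ with the already-controlled $(P-(\alpha+i)^m)^{-1}$, writes the truncated difference as a cluster sum, and closes via the algebraic bounds $|z^m-(\alpha+i)^m|\lesssim\alpha^{m-1}$ and $|\tau^m-z^m|\gtrsim(k-1)(\alpha+k)^{m-1}$ for $\tau\in[\alpha+k-1,\alpha+k)$, combined with Lemma~\ref{lem_truncated_eq} and summation in $k$. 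Case~(ii) is then reduced to the same comparison using $(ze^{-2\pi i/m})^m=z^m$. Your approach instead re-traverses the decomposition $(P-z^m)^{-1}=m_z^{\textrm{loc}}(Q)+r_z(Q)$, observes (correctly) that the kernel bounds of Section~2.4 — Lemma~\ref{lem_symbol}, the $L^2$ and $L^\infty$ bounds for $S_{z,j}(Q)$, and Lemma~\ref{lem_estimate_bourgain_sogge}, whose constant is uniform over all $w\in\C\setminus[0,\infty)$ — never invoke $\delta$-separation and hence hold for all $z\in\Xi$, $|z|\ge1$, and then replaces the $\delta$-gap used for $r_z$ in Section~2.5 by the spectral gap $|\mu_j-\alpha|\ge1$ enforced by $I-\chi_{[\alpha-1,\alpha+1)}$. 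What the paper's method buys is modularity: Theorem~\ref{thm_main} is used as a black box and only multiplier algebra is needed afterwards. What your method buys is a unified treatment of all $0<\Im z\le1$ with no dichotomy or auxiliary comparison point, together with the observation — not stated in the paper — that the $\delta$-dependence in Theorem~\ref{thm_main} resides entirely in the non-local part $r_z(Q)$. One small slip in your write-up: in Case~(ii), $-\tau_{m/2-1}=ze^{-2\pi i/m}=\alpha-i\eta$ has imaginary part tending to $0^-$, not $0^+$; this is immaterial, since what you actually use is $|\tau+\tau_{m/2-1}|=|\tau-\alpha+i\eta|\ge|\tau-\alpha|\ge1$.
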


\begin{proof}
Let us start by proving \eqref{eq_4_1_1}.  Let $z\in \Xi$, $\textrm{Re }z=\alpha\gg 1$, and assume first that $\delta\le \textrm{Im }z= \beta\le 1$ for some $\delta>0$.
We write
\[
\chi_{[\alpha-1, \alpha+1)}\circ (P-z^m)^{-1} f=\sum_{\mu_j\in [\alpha-1, \alpha+1)} (\mu_j^m-z^m)^{-1}E_jf.
\]
By \eqref{eq_truncated}, we get
\begin{equation}
\label{eq_4_1_2}
\|\chi_{[\alpha-1, \alpha+1)}\circ (P-z^m)^{-1} f\|_{L^{\frac{2n}{n-m}}(M)}\le C \alpha^{m-1}(\sup_{\tau\in  [\alpha-1, \alpha+1)} |(\tau^m-z^m)^{-1}|)\|f\|_{L^{\frac{2n}{n+m}}(M)},
\end{equation}
Writing
\[
z^m=(\alpha+i\beta)^m=\alpha^m(1+mi\beta/\alpha+\mathcal{O}(\beta^2/\alpha^2)),
\]
we have
\begin{equation}
\label{eq_4_1_3}
\textrm{Im } z^m=m\beta\alpha^{m-1}+\mathcal{O}(\beta^2\alpha^{m-2})\ge \frac{m}{2}\beta\alpha^{m-1}\ge \frac{m}{2}\delta\alpha^{m-1},
\end{equation}
for $\alpha$ sufficiently large. Therefore, it follows from \eqref{eq_4_1_2}, \eqref{eq_4_1_3} and \eqref{eq_resolvent_est} that
\begin{equation}
\label{eq_4_1_4}
\|(I-\chi_{[\alpha-1, \alpha+1)})\circ (P-z^m)^{-1} f\|_{L^{\frac{2n}{n-m}}(M)}\le C\|f\|_{L^{\frac{2n}{n+m}}(M)},
\end{equation}
for all $z\in \Xi$, $\textrm{Re }z=\alpha\gg 1$, and $\delta\le \textrm{Im }z\le 1$, uniformly in $z$.

Let $z\in \Xi$, $\textrm{Re }z=\alpha\gg 1$, and $0<\textrm{Im }z=\beta\le 1/2$. Then using the fact that $\alpha+i\in \Xi$ for $\alpha$ sufficiently large and  \eqref{eq_4_1_4}, we see that
 \eqref{eq_4_1_1} follows once we establish  that
\begin{equation}
\label{eq_4_1_4_0}
\|(I-\chi_{[\alpha-1, \alpha+1)})\circ ((P-z^m)^{-1}-(P-(\alpha+i)^m)^{-1}) f\|_{L^{\frac{2n}{n-m}}(M)}\le C\|f\|_{L^{\frac{2n}{n+m}}(M)},
\end{equation}
uniformly in $z$.
We have
\begin{equation}
\label{eq_4_1_4_3}
\begin{aligned}
(I&-\chi_{[\alpha-1, \alpha+1)})\circ ((P-z^m)^{-1}-(P-(\alpha+i)^m)^{-1}) f\\
&=\bigg(\sum_{\mu_j\in [0,\alpha-1)} +\sum_{\mu_j\in [\alpha+1,+\infty)}\bigg) \bigg(\frac{1}{\mu_j^m-z^m}-\frac{1}{\mu_j^m-(\alpha+i)^m}\bigg) E_jf\\
&= \bigg(\sum_{\mu_j\in [0,\alpha-1)} +\sum_{k=2}^\infty \sum_{\mu_j\in [\alpha+k-1,\alpha+k)}\bigg) \bigg(\frac{1}{\mu_j^m-z^m}-\frac{1}{\mu_j^m-(\alpha+i)^m}\bigg) E_jf.
\end{aligned}
\end{equation}
By \eqref{eq_truncated}, for $k=2,3\dots$, we get
\begin{equation}
\label{eq_4_1_5}
\begin{aligned}
\|\sum_{\mu_j\in [\alpha+k-1,\alpha+k)}\bigg(\frac{1}{\mu_j^m-z^m}-\frac{1}{\mu_j^m-(\alpha+i)^m}\bigg) E_jf\|_{L^{\frac{2n}{n-m}}(M)}\le C(\alpha+k)^{m-1}\\
\sup_{\tau\in [\alpha+k-1,\alpha+k)} \bigg| \frac{z^m-(\alpha+i)^m}{(\tau^m-z^m)(\tau^m-(\alpha+i)^m)}\bigg|\|f\|_{L^{\frac{2n}{n+m}}(M)}.
\end{aligned}
\end{equation}
We have, for $\alpha$ sufficiently large, that
\[
z^m-(\alpha+i)^m=\alpha^{m-1}m i(\beta-1)+\mathcal{O}(\alpha^{m-2}),
\]
and therefore,
\begin{equation}
\label{eq_4_1_6}
|z^m-(\alpha+i)^m|\le C\alpha^{m-1}.
\end{equation}
As $\textrm{Re }z^m=\alpha^m+\mathcal{O}(\alpha^{m-2})$, we  obtain that
\begin{equation}
\label{eq_4_1_7}
\begin{aligned}
&|\tau^m-z^m|\ge |\tau^m-\alpha^m-\mathcal{O}(\alpha^{m-2})|\\
&= |(\tau-\alpha)(\tau^{m-1}+\tau^{m-2}\alpha+\dots+\tau\alpha^{m-2}+\alpha^{m-1})-\mathcal{O}(\alpha^{m-2})|\\
&\ge  (k-1)(\tau^{m-1}+\alpha^{m-1})-|\mathcal{O}(\alpha^{m-2})|\ge (k-1)\tau^{m-1}\ge
 (k-1)(\alpha+k)^{m-1}/C,
\end{aligned}
\end{equation}
for $\tau\in  [\alpha+k-1,\alpha+k)$, $k=2,3,\dots$, and  $\alpha$ sufficiently large. Thus, it follows from \eqref{eq_4_1_5}, \eqref{eq_4_1_6}, and \eqref{eq_4_1_7} that
\begin{equation}
\label{eq_4_1_4_1}
\begin{aligned}
\|\sum_{\mu_j\in [\alpha+k-1,\alpha+k)}\bigg(\frac{1}{\mu_j^m-z^m}-\frac{1}{\mu_j^m-(\alpha+i)^m}\bigg) E_jf\|_{L^{\frac{2n}{n-m}}(M)}\\
\le \frac{C}{(k-1)^2} \|f\|_{L^{\frac{2n}{n+m}}(M)},
\end{aligned}
\end{equation}
for $k=2,3,\dots$.
Using  \eqref{eq_truncated} and rescaling, we get
\begin{equation}
\label{eq_4_1_4_2}
\|\sum_{\mu_j\in [0,\alpha-1)}\bigg(\frac{1}{\mu_j^m-z^m}-\frac{1}{\mu_j^m-(\alpha+i)^m}\bigg) E_jf\|_{L^{\frac{2n}{n-m}}(M)}
\le C \|f\|_{L^{\frac{2n}{n+m}}(M)}.
\end{equation}
Hence, \eqref{eq_4_1_4_0} follows from  \eqref {eq_4_1_4_3},  \eqref {eq_4_1_4_1}, and  \eqref {eq_4_1_4_2}. The proof of \eqref{eq_4_1_1} is complete.

Let us now show  \eqref{eq_4_1_1_another}. To that end,  letting $w=ze^{-2\pi i/m}$, we have $w^m=z^m$, and therefore, \eqref{eq_4_1_1_another} is a consequence of  the uniform estimate,
\[
\|(I-\chi_{[\alpha-1, \alpha+1)})\circ ((P-w^m)^{-1}-(P-(\alpha+i)^m)^{-1}) f\|_{L^{\frac{2n}{n-m}}(M)}\le C\|f\|_{L^{\frac{2n}{n+m}}(M)},
\]
with $z\in \Xi$, $w=ze^{-2\pi i/m}$, $\textrm{Re }w=\alpha\gg 1$, and $0<-\textrm{Im } w\le 1$. This is obtained  similarly to the derivation of \eqref{eq_4_1_4_0}. The proof of  Lemma \ref{lem_satur_1} is complete.
\end{proof}

Let
\[
N(\alpha)=\#\{j:\mu_j<\alpha\}
\]
be the counting function for the eigenvalues of the operator $Q$.  We have
\begin{equation}
\label{eq_exp_weyl_fun}
N(\alpha)=\int_M S_\alpha(x,x) d\mu(x),
\end{equation}
where
\[
S_\alpha(x,y)= \sum_{\mu_j< \alpha} e_j(x)\overline{e_j(y)}
\]
is the spectral function.

Similarly to \cite[Theorem 1.2]{Bourgain_Shao_Sogge_Yao} we obtain the following result which gives a sufficient condition for the optimality of the region $\Xi_\delta$ in the uniform resolvent estimate \eqref{eq_resolvent_est} for operators of order $m$,  in terms of the density of eigenvalues in shrinking  intervals of the form $[\alpha_k-\beta_k,\alpha_k+\beta_k)$, $\alpha_k\to \infty$, $0<\beta_k\to 0$ as $k\to \infty$.

\begin{lem}
\label{lem_satur_2}
 Assume that there exist sequences $\alpha_k\to \infty$ and $0<\beta_k\to 0$ as $k\to \infty$ such that
\begin{equation}
\label{eq_4_1_8}
(\beta_k\alpha_k^{n-1})^{-1}[N(\alpha_k+\beta_k)-N(\alpha_k-\beta_k)]\to \infty,\quad k\to \infty.
\end{equation}
 Let $z_k^{(1)}=\alpha_k+i\beta_k$ and $z_k^{(2)}=e^{2\pi i/m}(\alpha_k-i\beta_k)$. Then we have
\begin{equation}
\label{eq_4_1_9}
\|(P-(z_k^{(j)})^m)^{-1}\|_{L^{\frac{2n}{n+m}}(M)\to L^{\frac{2n}{n-m}}(M)}\to \infty,\quad k\to \infty, \quad j=1,2.
\end{equation}
\end{lem}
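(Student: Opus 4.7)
The plan is to adapt the argument of \cite{Bourgain_Shao_Sogge_Yao}; case (ii) reduces to (i) by replacing $z_k^{(2)}$ with $w_k = z_k^{(2)}e^{-2\pi i/m}$ since $(z_k^{(2)})^m = w_k^m$, so I sketch only the proof of (i). Set $R_k = (P-(z_k^{(1)})^m)^{-1}$ and let $\chi_k$ be the spectral projection of $Q$ onto $I_k := [\alpha_k-\beta_k, \alpha_k+\beta_k)$. The expansion $(z_k^{(1)})^m = \alpha_k^m + im\alpha_k^{m-1}\beta_k + O(\alpha_k^{m-2}\beta_k^2)$ yields $\mathrm{Im}((z_k^{(1)})^m) \sim m\alpha_k^{m-1}\beta_k$ and $|\mu_j^m - (z_k^{(1)})^m|\le C\alpha_k^{m-1}\beta_k$ for $\mu_j\in I_k$, so
\[
\mathrm{Im}\langle R_k f, f\rangle = \mathrm{Im}((z_k^{(1)})^m)\sum_j\frac{|\langle f, e_j\rangle|^2}{|\mu_j^m-(z_k^{(1)})^m|^2}\ge \frac{c}{\alpha_k^{m-1}\beta_k}\|\chi_k f\|_{L^2(M)}^2.
\]
H\"older's inequality (using $q' = p$) gives $|\langle R_k f, f\rangle|\le \|R_k\|_{L^p\to L^q}\|f\|_{L^p}^2$, hence
\[
\|R_k\|_{L^p(M)\to L^q(M)}\ge \frac{c}{\alpha_k^{m-1}\beta_k}\sup_{f\ne 0}\frac{\|\chi_k f\|_{L^2(M)}^2}{\|f\|_{L^p(M)}^2}.
\]

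To bound the supremum from below, observe that by \eqref{eq_exp_weyl_fun}, $\int_M\chi_k(x,x)\,d\mu = N_k$, so there exists $x_k\in M$ with $\chi_k(x_k,x_k)\ge N_k/\mu(M)$. I take $f_k$ to be an $L^p$-normalized semiclassical wavepacket at scale $1/\alpha_k$ centered at $x_k$: in a local chart, $f_k(y) = \alpha_k^{n/p}\phi_0(\alpha_k(y-x_k))$ for a fixed smooth bump $\phi_0$, possibly modulated by $e^{i\alpha_k\xi\cdot(y-x_k)}$ with $\xi\in\Sigma_{x_k}$. Rescaling shows that $\|f_k\|_{L^p(M)}\sim 1$ while $\|f_k\|_{L^2(M)}^2\sim \alpha_k^m$ (saturating Bernstein's inequality from Lemma \ref{lem_Bernstein_type}), and
\[
\|\chi_k f_k\|_{L^2(M)}^2 = \alpha_k^{m-n}\sum_{\mu_j\in I_k}\Bigl|\int\phi_0(v)\,\overline{e_j(x_k+v/\alpha_k)}\,dv\Bigr|^2.
\]
Granted the microlocal lower bound
\[
\sum_{\mu_j\in I_k}\Bigl|\int\phi_0(v)\,\overline{e_j(x_k+v/\alpha_k)}\,dv\Bigr|^2\gtrsim \chi_k(x_k, x_k),
\]
one deduces $\|\chi_k f_k\|_{L^2}^2/\|f_k\|_{L^p}^2\gtrsim N_k\alpha_k^{m-n}/\mu(M)$, and the density hypothesis \eqref{eq_4_1_8} gives $\|R_k\|_{L^p(M)\to L^q(M)}\gtrsim N_k/(\alpha_k^{n-1}\beta_k)\to \infty$.

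The main obstacle is the microlocal overlap estimate in the last display, which is the dual statement that a semiclassical wavepacket concentrated at $x_k$ captures a definite fraction of the cluster's $L^2$ mass. I would prove it by representing $\chi_k$ via a Fourier-type identity involving the half-wave group $e^{itQ}$ for small $t$, and applying the H\"ormander--Lax parametrix of Lemma \ref{lem_Lax_parametrix} to express $\chi_k(x_k+v/\alpha_k, x_k+u/\alpha_k)$ as an oscillatory integral with phase governed by the cosphere $\Sigma_{x_k}$. Strict convexity of $\Sigma_{x_k}$ ensures that stationary phase applies cleanly, and choosing $\phi_0$ as a Gaussian coherent state concentrated at a suitable $(x_k,\xi_k)\in T^*M$ with $\xi_k\in\Sigma_{x_k}$, the leading term from stationary phase reproduces a definite fraction of $\chi_k(x_k,x_k)$, parallel to \cite[\S 3]{Bourgain_Shao_Sogge_Yao}.
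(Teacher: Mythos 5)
Your approach is genuinely different from the paper's and has a real gap at its center. On the positive side, the quadratic-form trick -- using
\[
\mathrm{Im}\langle R_k f, f\rangle = \mathrm{Im}\big(-\overline{z_k^m}\big)\sum_j\frac{|\langle f,e_j\rangle|^2}{|\mu_j^m-z_k^m|^2}\ge \frac{c}{\alpha_k^{m-1}\beta_k}\|\chi_k f\|_{L^2}^2
\]
combined with H\"older on the duality line $q'=p$ -- is correct and cleverly sidesteps Lemma~\ref{lem_satur_1}: positivity of the imaginary part automatically isolates the cluster. Your Weyl-type choice $\chi_k(x_k,x_k)\ge N_k/\mu(M)$ and the scaling computations $\|f_k\|_{L^p}\sim 1$, $\|f_k\|_{L^2}^2\sim\alpha_k^m$ are also all fine.

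The gap is precisely the step you flag as ``the main obstacle'': the lower bound
\[
\sum_{\mu_j\in I_k}\Bigl|\int\phi_0(v)\,\overline{e_j(x_k+v/\alpha_k)}\,dv\Bigr|^2\gtrsim\chi_k(x_k,x_k)
\]
is not a routine consequence of the H\"ormander--Lax parametrix. Writing the left side as $\int\!\!\int\phi_0(v)\overline{\phi_0(u)}\,\chi_k(x_k+v/\alpha_k,x_k+u/\alpha_k)\,du\,dv$, one sees that the cluster kernel $\chi_k$ oscillates at frequency $\sim\alpha_k$ near the diagonal, so an unmodulated bump $\phi_0$ tests only the zero-frequency component and can pick up essentially nothing. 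Modulating by $e^{i\alpha_k\xi_k\cdot v}$ with $\xi_k\in\Sigma_{x_k}$ localizes the Fourier support to a unit ball about $\alpha_k\xi_k$ on the cosphere shell, but whether the eigenfunctions in $I_k$ concentrate their microlocal mass near that specific covector is exactly the sort of delicate equidistribution question (e.g. clustering of lattice points on spheres, or spreading of Zoll clusters) that the lemma is meant to be insensitive to. As stated the overlap bound is neither obviously true at the claimed strength nor implied by strict convexity and stationary phase alone; it would need its own argument with its own loss analysis, and the proposal does not supply one.

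The paper takes a shorter route that avoids the problem entirely. It first peels off the non-cluster part via Lemma~\ref{lem_satur_1}, then invokes the Bernstein-type bound of Lemma~\ref{lem_Bernstein_type} to pass from $L^{2n/(n+m)}\to L^{2n/(n-m)}$ to $L^1\to L^\infty$, which converts the operator norm into $\sup_{x,y}|K(x,y)|$. Restricting to $x=y$ on the diagonal, taking imaginary parts to obtain a sign-definite sum, bounding $|\mu_j^m-z_k^m|\lesssim\beta_k\alpha_k^{m-1}$ in the thin cluster, and finally averaging $\sup_x\sum|e_j(x)|^2\ge\mathrm{Vol}(M)^{-1}\int_M\sum|e_j(x)|^2\,d\mu=N_k/\mathrm{Vol}(M)$ via \eqref{eq_exp_weyl_fun} produces the blow-up directly, with no need for a test function or a microlocal concentration estimate. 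If you want to keep the quadratic-form reduction, you would still need to replace the wavepacket step with something equivalent to this $L^1\to L^\infty$ kernel argument -- at which point you are essentially reproducing the paper's proof.
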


\begin{proof}
In what follows we shall only establish \eqref{eq_4_1_9} for $j=1$, the proof in the other case being similar. We shall then write $z_k=z_k^{(1)}$.  Let us notice  that $z_k\in \Xi$ for $k$ large enough.

By \eqref{eq_4_1_1}, we know that for large $k$,
\[
\|(I-\chi_{[\alpha_k-1,\alpha_k+1)})\circ (P-z_k^m)^{-1}\|_{L^{\frac{2n}{n+m}}(M)\to L^{\frac{2n}{n-m}}(M)}\le C,
\]
uniformly in $k$.  Thus, we only need to show that
\begin{equation}
\label{eq_4_1_10}
\|\chi_{[\alpha_k-1,\alpha_k+1)}\circ (P-z_k^m)^{-1}\|_{L^{\frac{2n}{n+m}}(M)\to L^{\frac{2n}{n-m}}(M)}\to \infty,\quad k\to \infty.
\end{equation}

Let $g\in C^\infty_0(\R)$ be such that $0\notin\supp(g)$ and $g(\tau)=1$ for $\tau\in [1/2,2]$. Then for large $k$, we have
\begin{equation}
\label{eq_4_1_11}
\chi_{[\alpha_k-1,\alpha_k+1)}=g(Q/\alpha_k)\circ \chi_{[\alpha_k-1,\alpha_k+1)} \circ g(Q/\alpha_k).
\end{equation}
Using \eqref{eq_4_1_11}  and Lemma \ref{lem_Bernstein_type}, we obtain
\begin{align*}
\|&\chi_{[\alpha_k-1,\alpha_k+1)}\circ (P-z_k^m)^{-1}f\|_{L^\infty(M)}\\
&=\|g(Q/\alpha_k)\circ  \chi_{[\alpha_k-1,\alpha_k+1)}\circ (P-z_k^m)^{-1} \circ g(Q/\alpha_k)f\|_{L^\infty(M)}\\
&\le
C\alpha_k^{\frac{n-m}{2}} \|\chi_{[\alpha_k-1,\alpha_k+1)}\circ (P-z_k^m)^{-1}\|_{L^{\frac{2n}{n+m}}(M)\to L^{\frac{2n}{n-m}}(M)}\| g(Q/\alpha_k)f\|_{L^{\frac{2n}{n+m}}(M)}\\
&\le C\alpha_k^{n-m} \|\chi_{[\alpha_k-1,\alpha_k+1)}\circ (P-z_k^m)^{-1}\|_{L^{\frac{2n}{n+m}}(M)\to L^{\frac{2n}{n-m}}(M)}\|f\|_{L^1(M)}.
\end{align*}
Thus, in order to show \eqref{eq_4_1_10}  it suffices to check that
\begin{equation}
\label{eq_4_1_11_0}
\alpha_k^{-(n-m)} \|\chi_{[\alpha_k-1,\alpha_k+1)}\circ (P-z_k^m)^{-1}\|_{L^1(M)\to L^\infty(M)}\to \infty,\quad k\to \infty.
\end{equation}
The kernel of the operator $\chi_{[\alpha_k-1,\alpha_k+1)}\circ (P-z_k^m)^{-1}$ is given by
\[
K(x,y)=\sum_{\mu_j\in [\alpha_k-1,\alpha_k+1)}\frac{1}{\mu_j^m-z_k^m} e_j(x)\overline{e_j(y)}.
\]
We have
\begin{align*}
\alpha_k^{-(n-m)}& \|\chi_{[\alpha_k-1,\alpha_k+1)}\circ (P-z_k^m)^{-1}\|_{L^1(M)\to L^\infty(M)}=\alpha_k^{-(n-m)} \textrm{sup}_{x,y\in M}|K(x,y)|\\
&\ge \alpha_k^{-(n-m)}  \sup_{x\in M}\bigg| \sum_{\mu_j\in [\alpha_k-1,\alpha_k+1)}\frac{1}{\mu_j^m-z_k^m} |e_j(x)|^2 \bigg|\\
&\ge \alpha_k^{-(n-m)} \sup_{x\in M}\bigg|  \textrm{Im } \sum_{\mu_j\in [\alpha_k-1,\alpha_k+1)}\frac{\mu_j^m -\overline{z_k}^m}{|\mu_j^m-z_k^m|^2} |e_j(x)|^2 \bigg|\\
&\ge  \alpha_k^{-(n-m)}|\textrm{Im } (-\overline{z_k}^m)| \sup_{x\in M}\sum_{\mu_j \in [\alpha_k-\beta_k,\alpha_k+\beta_k)}\frac{1}{|\mu_j^m-z_k^m|^2} |e_j(x)|^2:=L_k,
\end{align*}
for $k$ sufficiently large.
Writing $\overline{z_k}^m=(\alpha_k-i\beta_k)^m$, we get
\begin{equation}
\label{eq_4_1_12}
\textrm{Im } (-\overline{z_k}^m)=m\beta_k\alpha_k^{m-1}+\mathcal{O}(\beta_k^2\alpha_k^{m-2})\ge m\beta_k\alpha_k^{m-1}/2,
\end{equation}
for $k$ sufficiently large. Using the fact that $\mu_j \in [\alpha_k-\beta_k,\alpha_k+\beta_k)$ in the last sum, we obtain that
\begin{equation}
\label{eq_4_1_13}
|\mu_j^m-z_k^m|=|\mu_j-z_k||\mu_j^{m-1}+\mu_j^{m-2}z_k+\dots+ \mu_jz_k^{m-2}+z_k^{m-1}|\le C\beta_k\alpha_k^{m-1},
\end{equation}
for $k$ sufficiently large.  It follows from \eqref{eq_exp_weyl_fun}, \eqref{eq_4_1_12}, \eqref{eq_4_1_13} and \eqref{eq_4_1_8} that
\begin{align*}
L_k&\ge \frac{1}{C}(\beta_k\alpha_k^{n-1})^{-1} \sup_{x\in M}\sum_{\mu_j \in [\alpha_k-\beta_k,\alpha_k+\beta_k)} |e_j(x)|^2\\
&\ge  \frac{1}{C}(\beta_k\alpha_k^{n-1})^{-1} \frac{1}{\textrm{Vol}(M)} \int_M \sum_{\mu_j \in [\alpha_k-\beta_k,\alpha_k+\beta_k)} |e_j(x)|^2 d\mu(x)\\
&= \frac{1}{C}(\beta_k\alpha_k^{n-1})^{-1} \frac{1}{\textrm{Vol}(M)} [N(\alpha_k+\beta_k)-N(\alpha_k-\beta_k)]\to \infty,
\end{align*}
as $k\to \infty$.  Hence, we get \eqref{eq_4_1_11_0}, which completes the proof of \eqref{eq_4_1_9}.
 The proof of Lemma \ref{lem_satur_2} is complete.
\end{proof}

Notice that the Weyl law, see \cite{Hormander_1968},
\[
N(\alpha)=C\alpha^n+\mathcal{O}(\alpha^{n-1}),\quad C=(2\pi)^{-n}\int\!\!\!\int_{\{(x,\xi)\in T^*M:q(x,\xi)\le 1\}} dxd\xi,
\]
implies that
\[
N(\alpha_k+1)-N(\alpha_k-1)=\mathcal{O}(\alpha_k^{n-1}).
\]
Consequently, to find sequences $\alpha_k\to \infty$ and $0<\beta_k\to 0$ as $k\to \infty$ satisfying  \eqref{eq_4_1_8}, we would like to exhibit  a situation when the spectrum of  the operator $Q$ is distributed in a non-uniform fashion, clustering around the sequence $\alpha_k$.

To verify the  assumption \eqref{eq_4_1_8} in Lemma \ref{lem_satur_2},  we shall need the following result concerning the spectrum of $Q$, when the Hamilton flow of $q$ is periodic,  due to \cite{Weinstein_1977} and \cite{Colin_de_Verdiere_1979}, see also \cite[Theorem 29.2.2]{Hormander_book_4}.

\begin{thm}
\label{thm_weinstein}
Let $Q\in \Psi_{\emph{\textrm{cl}}}^1(M)$ be positive elliptic self-adjoint operator with principal symbol $q$ and zero subprincipal symbol. Assume that the Hamilton flow $\exp(t H_q)$, generated by the principal symbol $q$, is periodic with a common minimal period $T$ on $q^{-1}(1)$. Then there is a constant $C>0$ such that all eigenvalues of $Q$, except finitely many, belong to the intervals $I_k:=[\frac{2\pi}{T}(k+\frac{\alpha}{4})-\frac{C}{k}, \frac{2\pi}{T}(k+\frac{\alpha}{4})+\frac{C}{k}]$, $k=1,2\dots$, where $\alpha>0$ is a constant. Furthermore, the number of eigenvalues of $Q$ in $I_k$, denoted by $d_k$,   is a polynomial in $k$ of degree $n-1$ of the form
\[
d_k=nk^{n-1} T^{-n}\int\!\!\!\int_{q<1}dxd\xi+\mathcal{O}(k^{n-2}).
\]
\end{thm}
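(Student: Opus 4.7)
The plan is to study the half-wave propagator $U(t)=e^{itQ}$ and exploit the periodicity of the Hamilton flow at time $t=T$. By H\"ormander--Duistermaat theory, $U(t)$ is a Fourier integral operator whose underlying canonical transformation on $T^*M\setminus 0$ equals $\exp(tH_q)$. Since $\exp(TH_q)=\mathrm{id}$ on $q^{-1}(1)$ and $q$ is homogeneous of degree one in $\xi$, this identity extends to all of $T^*M\setminus 0$, so $U(T)$ is in fact a zeroth-order pseudodifferential operator. Its principal symbol must be a constant on $T^*M\setminus 0$; a transport-equation computation along the periodic orbits, using crucially the vanishing of the subprincipal symbol of $Q$, identifies this constant as $e^{-i\pi\alpha/2}$, where $\alpha$ is the common Maslov index of the closed orbits.

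Next I would produce a self-adjoint $B\in\Psi^{-1}_{\mathrm{cl}}(M)$ commuting with $Q$ such that $e^{iT(Q+B)}=e^{-i\pi\alpha/2}\,I$. This is done iteratively: starting from $U(T)=e^{-i\pi\alpha/2}(I+R_1)$ with $R_1\in\Psi^{-1}$, one solves a coboundary equation order by order, averaging along the closed Hamilton flow to kill successive lower-order corrections in the symbolic expansion. Writing $Q'=Q+B$, the identity $e^{iTQ'}=e^{-i\pi\alpha/2}I$ forces $\mathrm{spec}(Q')\subset(2\pi/T)(\mathbb{Z}+\alpha/4)$ exactly. Since $B$ has order $-1$, a min-max comparison between $Q$ and $Q'$ yields $|\mu_j-\mu'_j|\le C/\mu'_j$ for $j$ sufficiently large, placing all but finitely many eigenvalues of $Q$ inside the intervals $I_k$ of radius $C/k$ around $\sigma_k=(2\pi/T)(k+\alpha/4)$.

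It remains to count $d_k$, which up to $O(k^{n-2})$ equals the dimension of the $\sigma_k$-eigenspace of $Q'$. The circle action generated by the periodic Hamilton flow commutes (after suitable rescaling) with the $\mathbb{R}_+$-action on $T^*M\setminus 0$, and symplectic reduction at $q^{-1}(1)$ yields a compact symplectic orbifold of real dimension $2(n-1)$. Identifying the $\sigma_k$-cluster space with holomorphic sections of the $k$-th power of a prequantum line bundle over this reduced space, a Riemann--Roch computation (equivalently, the equivariant index theorem for the $S^1$-action) produces a polynomial in $k$ of degree $n-1$, whose leading coefficient is read off from the symplectic volume and matches $nT^{-n}\iint_{q<1}dx\,d\xi$.

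The principal obstacle is this last step. Weyl's law applied directly to $Q'$ gives $d_k$ only with remainder $O(k^{n-1})$, of the same order as the main term itself, and is therefore useless for the polynomial asymptotic. To reach the sharper $O(k^{n-2})$ error one must invoke the finer Duistermaat--Guillemin clean-intersection calculus for the periodic-orbit submanifold of $T^*M\times\mathbb{R}$, or the equivariant index theorem on the reduced phase space; both routes are delicate and contain the real content of \cite{Weinstein_1977,Colin_de_Verdiere_1979}, so here I would simply cite those works rather than reprove them.
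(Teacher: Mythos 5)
The paper does not actually prove Theorem~\ref{thm_weinstein}; it is stated as a known result and attributed outright to \cite{Weinstein_1977}, \cite{Colin_de_Verdiere_1979}, and \cite[Theorem~29.2.2]{Hormander_book_4}. There is therefore no internal argument against which to measure your sketch. That said, the outline you give is a faithful condensation of how the cited literature actually proves the theorem: the identification of $e^{iTQ}$ as a zeroth-order pseudodifferential operator once $\exp(TH_q)=\mathrm{id}$ (extended from $q^{-1}(1)$ to $T^*M\setminus 0$ by homogeneity), the computation of its principal symbol via the transport equation along closed orbits using the vanishing subprincipal symbol, the iterative averaging construction of a self-adjoint $B\in\Psi^{-1}_{\mathrm{cl}}$ commuting with $Q$ so that $Q'=Q+B$ has spectrum exactly on an arithmetic progression, and the comparison of $\mathrm{spec}(Q)$ with $\mathrm{spec}(Q')$. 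One place where the sketch is slightly loose: you invoke ``a min-max comparison'' to get $|\mu_j-\mu'_j|\lesssim 1/\mu'_j$, but raw min-max only yields a bound by $\|B\|_{L^2\to L^2}=O(1)$. The $O(1/k)$ rate requires using that $Q$ and $Q'$ commute (so eigenfunctions can be chosen common to both) together with the fact that $B\in\Psi^{-1}$ acting on the $k$-th cluster eigenspace has norm $O(1/k)$; you assert the commutation earlier, so the conclusion is right, but the phrase ``min-max'' undersells the needed simultaneous diagonalization. Finally, you correctly identify the polynomial asymptotic for $d_k$ with error $O(k^{n-2})$ as the genuinely hard part (requiring the Duistermaat--Guillemin clean-intersection trace formula or, equivalently, equivariant index theory on the reduced phase space) and you defer to the references for it --- which is exactly the level of rigor the paper itself adopts.
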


To prove Theorem \ref{thm_main_2},  let $Q=P^{1/m}$ and observe that the subprincipal symbol of $Q$ vanishes, see \cite[Section 1]{Duistermaat_Guillemin}.  It follows from  Theorem \ref{thm_weinstein} that the assumptions of Lemma \ref{lem_satur_2} are satisfied with $\alpha_k=\frac{2\pi}{T}(k+\frac{\alpha}{4})$ and  $C/k<\beta_k\to 0$ as $k\to \infty$.   The proof of Theorem \ref{thm_main_2}  is complete.

\section*{Acknowledgements}

The research of K.K. is partially supported by the
Academy of Finland (project 255580). K.K. would like to thank Kari Vilonen for some helpful advice.  The research of
G.U. is partially supported by the National Science Foundation and the Fondation de Sciences Math\'ematiques de Paris.


\begin{thebibliography} {1}


\bibitem{Bourgain_Shao_Sogge_Yao}
Bourgain, J.,  Shao, P.,  Sogge, C., and  Yao, X., \emph{On $L^p$-resolvent estimates and the density of eigenvalues for compact Riemannian manifolds}, preprint 2012, \textsf{http://arxiv.org/abs/1204.3927}.

\bibitem{Colin_de_Verdiere_1979}
Colin de Verdi\`ere, Yves, \emph{Sur le spectre des op\'erateurs elliptiques \`a bicaract\'eristiques toutes p\'eriodiques},
Comment. Math. Helv. \textbf{54} (1979), no. 3, 508--522.

\bibitem{DKS_resolvent}
Dos Santos Ferreira, D., Kenig, C., and Salo, M., \emph{On $L^p$ resolvent estimates for Laplace-Beltrami operators on compact manifolds}, Forum Math.,  to appear.

\bibitem{Duistermaat_Guillemin}
Duistermaat, J.,  Guillemin, V.,  \emph{The spectrum of positive elliptic operators and periodic bicharacteristics}, Invent. Math. \textbf{29} (1975), no. 1, 39--79.


\bibitem{Guillarmou_Hassell}
Guillarmou,  C., Hassell, A., \emph{Uniform Sobolev estimates for non-trapping metrics}, preprint 2012, \textsf{http://arxiv.org/abs/1205.4150}.

\bibitem{Hormander_1968}
H\"ormander, L.,  \emph{The spectral function of an elliptic operator}, Acta Math. \textbf{121} (1968), 193 --218.

\bibitem{Hormander_1971}
 H\"ormander, L., \emph{Fourier integral operators. I},  Acta Math. \textbf{127} (1971), no. 1 -- 2, 79 --183.

 \bibitem{Hormander_book_4}
  H\"ormander, L.,  \emph{The analysis of linear partial differential operators. IV. Fourier integral operators}, Springer-Verlag, Berlin, 2009.

\bibitem{Kenig_Ruiz_Sogge}
 Kenig, C.,  Ruiz, A., and Sogge, C., \emph{Uniform Sobolev inequalities and unique continuation for second order constant coefficient differential operators}, Duke Math. J. \textbf{55} (1987), no. 2, 329--347.


\bibitem{Ruiz_lecture_notes}
Ruiz, A., \emph{Harmonic Analysis and Inverse Problems}, lecture notes  from Summer School in Oulu, Finland, 2002,

\textsf{http://www.uam.es/gruposinv/inversos/publicaciones/Inverseproblems.pdf}.

\bibitem{Seeger_Sogge_1989}
 Seeger, A., Sogge, C., \emph{Bounds for eigenfunctions of differential operators},  Indiana Univ. Math. J. \textbf{38} (1989), no. 3, 669--682.

\bibitem{Shao_Yao}
Shao, P.,  Yao, X., \emph{Uniform Sobolev resolvent estimates for the Laplace-Beltrami operator on compact manifolds},  Int. Math. Res. Not., to appear.

\bibitem{Shen_2001}
Shen, Z., \emph{On absolute continuity of the periodic Schr\"odinger operators},
Internat. Math. Res. Notices 2001, no. 1, 1--31.

\bibitem{Sogge_book}
Sogge, C., \emph{Fourier integrals in classical analysis}, Cambridge University Press, Cambridge, 1993.

\bibitem{Sogge_1988}
 Sogge, C., \emph{Concerning the $L^p$ norm of spectral clusters for second-order elliptic operators on compact manifolds}, J. Funct. Anal. \textbf{77} (1988), 123--138.


\bibitem{Stein_book}
 Stein, E., \emph{Harmonic analysis: real-variable methods, orthogonality, and oscillatory integrals}. With the assistance of Timothy S. Murphy. Princeton Mathematical Series, 43. Monographs in Harmonic Analysis, III. Princeton University Press, Princeton, NJ, 1993.

\bibitem{Weinstein_1977}
Weinstein, A., \emph{Asymptotics of eigenvalue clusters for the Laplacian plus a potential},
Duke Math. J. \textbf{44} (1977), no. 4, 883--892.

\bibitem{Zworski_book}
Zworski, M., \emph{Semiclassical analysis}, Graduate Studies in Mathematics, 138. American Mathematical Society, Providence, RI, 2012.

\end{thebibliography}
\end{document}